\documentclass{amsart}
\usepackage{amssymb,amsmath, amsthm,latexsym}
\usepackage{psfrag}
\usepackage{amscd}
\usepackage{graphicx}
\newcommand{\cal}[1]{\mathcal{#1}}
\theoremstyle{plain}
\newtheorem*{theo}{Theorem}

\newtheorem{lemma}{Lemma}[section]
\newtheorem{theorem}[lemma]{Theorem}
\newtheorem{proposition}[lemma]{Proposition}
\newtheorem{corollary}[lemma]{Corollary}
\theoremstyle{definition}
\newtheorem{definition}[lemma]{Definition}
\parskip=\bigskipamount

\let\egthree=\phi
\let\phi=\varphi
\let\varphi=\egthree

%\let\pabel=\label

%\renewcommand{\label}[1]{\pabel{#1}{\tt #1}}

%\makeindex

\begin{document}
\title{Bowen's construction for the Teichm\"uller flow}
\author{Ursula Hamenst\"adt}
\thanks
{Keywords: Strata, Teichmueller flow, periodic orbits,
equidistribution\\
AMS subject classification: 37C40, 37C27, 30F60\\
Research
partially supported by a grant of the DFG}
%\dedicatory{To the memory of Martine Babillot}
\date{December 22, 2010}
\begin{abstract}
Let ${\cal Q}$ be a connected component of a stratum
in the moduli space of abelian or quadratic differentials for 
a non-exceptional
Riemann surface $S$ of finite type. We show that the probability measure
on ${\cal Q}$ 
in the Lebesgue measure class which is invariant under
the Teichm\"uller flow is obtained by
Bowen's construction.
\end{abstract}

\maketitle

\section{Introduction}

The \emph{Teichm\"uller flow} $\Phi^t$ 
acts on components of 
strata in the moduli space of area one abelian or 
quadratic differentials for a non-exceptional surface $S$ of finite type.
This flow has many properties which resemble
the properties of an Anosov flow. For example, there is
a pair of transverse invariant foliations, and there is 
an invariant mixing 
Borel probability measure $\lambda$ in the Lebesgue measure class
which is absolutely continuous with respect to these foliations, with  
conditional measures which are uniformly
expanded and contracted by the flow \cite{M82,V86}.  This measure
is even exponentially mixing, i.e. 
exponential decay of correlations for H\"older observables
holds true \cite{AGY06,AR09}.

The entropy $h$ of the Lebesgue measure $\lambda$
is the supremum of the topological entropies of the restriction 
of $\Phi^t$ to compact invariant sets \cite{H07b}. 
For strata of abelian differentials,
$\lambda$ is the unique
invariant measure of maximal entropy \cite{BG07}.

The goal of this note is to extend further the analogy between
the Teichm\"uller flow on components of strata 
and Anosov flows.  
An Anosov flow $\Psi^t$ on a compact 
manifold $M$ admits a unique Borel
probability measure $\mu$ of maximal entropy.
This measure can be
obtained as follows \cite{B73}. Every periodic 
orbit $\gamma$ of $\Psi^t$ 
of prime period $\ell(\gamma)>0$ supports  
a unique $\Psi^t$-invariant Borel measure $\delta(\gamma)$ 
of total mass $\ell(\gamma)$. If $h>0$ is the topological
entropy of $\Psi^t$ then $\mu$ 
is the (unique) weak limit  
of the sequence of measures 
\[e^{-hR}\sum_{\ell(\gamma)\leq R}\delta(\gamma)\]
as $R\to \infty$. 
In particular, the number of periodic orbits 
of period at most $R$ is
asymptotic to $e^{hR}/hR$ as $R\to \infty$.

For any connected component ${\cal Q}$ of a stratum
of abelian or quadratic differentials 
the $\Phi^t$-invariant 
Lebesgue measure $\lambda$ on ${\cal Q}$
can be obtained in the same way. 
For a precise formulation, we say that a family $\{\mu_i\}$ of 
finite Borel measures on the moduli space
${\cal H}(S)$ of area one abelian differentials
or on the moduli space 
${\cal Q}(S)$ of area one quadratic differentials
\emph{converges weakly} to $\lambda$ if for every continuous
function $f$ on ${\cal H}(S)$ or on ${\cal Q}(S)$ 
with compact support we have 
\[\int fd\mu_i\to \int fd\lambda.\]
Let $\Gamma({\cal Q})$ be the
set of all periodic orbits for $\Phi^t$ contained in
${\cal Q}$. 
For $\gamma\in \Gamma({\cal Q})$ let 
$\ell(\gamma)>0$ be the prime period of $\gamma$ and denote
by $\delta(\gamma)$ the $\Phi^t$-invariant
Lebesgue measure on $\gamma$ of total mass $\ell(\gamma)$.
We show

\begin{theo} For every component ${\cal Q}$ of a stratum 
in the moduli space of abelian or quadratic differentials
 the measures
\[\mu_R=e^{-hR}\sum_{\gamma\in 
\Gamma({\cal Q}),\ell(\gamma)\leq R}\delta(\gamma)\]
converge as $R\to \infty$ weakly 
to the Lebesgue measure on ${\cal Q}$.
\end{theo}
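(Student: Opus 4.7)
The plan is to emulate Bowen's argument for Anosov flows by reducing the equidistribution problem to a statement about a symbolic model of the Teichm\"uller flow. First, I would represent $\Phi^t$ restricted to ${\cal Q}$ as a suspension flow over a countable topological Markov shift $(\Sigma,\sigma)$ with a positive H\"older roof function $r$, using either the Rauzy--Veech / Veech zippered rectangles construction or the train track coding from the author's earlier work. Under such a coding, $\Phi^t$-periodic orbits in ${\cal Q}$ correspond (up to finite ambiguity) to $\sigma$-periodic orbits, with prime period equal to the Birkhoff sum of $r$. The $\Phi^t$-invariant Lebesgue measure $\lambda$ lifts to the suspension of a Gibbs-equilibrium state $\nu$ on $\Sigma$ for a H\"older potential whose pressure, balanced against $hr$, vanishes; equivalently, $\nu$ realizes the Gurevich entropy $h$ of the symbolic flow.

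Next, I would carry out Bowen's counting-and-equidistribution step inside the symbolic model using the thermodynamic formalism for countable Markov shifts (Sarig). For any continuous compactly supported $f$ on ${\cal Q}$, the integral $\int f\, d\delta(\gamma)$ equals the Birkhoff sum along the symbolic periodic orbit of $\gamma$ of the induced function $\tilde f(x)=\int_0^{r(x)} f(\Phi^t x)\, dt$, and $\int f\, d\mu_R$ can be rewritten as a Laplace-type sum over closed orbits of $\sigma$ twisted by $-hr$. A renewal-style argument, mirroring Bowen's, applied to the twisted transfer operator $L_{-hr}$ with its simple leading eigenvalue then identifies the limit with $\int f\, d\nu/\int r\, d\nu = \int f\, d\lambda$.

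The main obstacle is the non-compactness of ${\cal Q}$, which translates symbolically into the countability of $\Sigma$ and the unboundedness of $r$. To close the argument one needs: positive recurrence of $\sigma$ at the critical parameter $h$, so that $\nu$ is a finite Gibbs state; exponential tail estimates for $r$ under $\nu$ controlling cuspidal excursions; and a uniform counting estimate ruling out escape of periodic-orbit mass into the cusps in the weak$^*$ limit. These are precisely the quantitative ingredients supplied by prior counting work for periodic orbits in strata together with the exponential mixing and recurrence estimates of \cite{AGY06,AR09} and the quantitative recurrence of the Teichm\"uller flow to the thick part. The delicate execution point is that the identification $\lambda \leftrightarrow \nu$ must be performed intrinsically in the symbolic model: since \cite{BG07} only settles uniqueness of the measure of maximal entropy for abelian differentials, the quadratic case cannot proceed by appealing to that uniqueness and must instead verify directly that the suspension of the distinguished Gibbs state equals Lebesgue.
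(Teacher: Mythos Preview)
Your route is genuinely different from the paper's, and the gap lies exactly where you yourself flag the ``main obstacle''. The paper does \emph{not} pass through a symbolic model, thermodynamic formalism, or a renewal argument at all. It works directly on ${\cal Q}$ in the style of Margulis \cite{Mar04}: for a small box $V$ with a local product structure one counts intersection components of $\Phi^R V\cap V$, relates their number to periodic orbits, and compares to $\lambda$ via mixing. The two nontrivial inputs are (i) the Hodge-norm contraction of \cite{ABEM10} (Theorem~\ref{hodgenorm} here), which plays the role of uniform hyperbolicity and lets one show that any weak limit $\mu$ of the $\mu_R$ satisfies $\mu\leq\lambda$ (Proposition~\ref{absolute}), and (ii) the geometry of the curve graph, used to promote pseudo-orbits to honest periodic orbits lying in ${\cal Q}$ and passing through a prescribed small set (Proposition~\ref{lowerestimate}). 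Train tracks enter only to manufacture product coordinates near boundary points of the stratum, not to build a shift.

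In your outline, the sentence ``These are precisely the quantitative ingredients supplied by prior counting work\ldots'' is doing all the work and is not justified. The counting results you invoke (Eskin--Mirzakhani, Eskin--Mirzakhani--Rafi) either postdate or are contemporaneous with this paper and in any case are essentially what one is trying to prove; citing them is circular. Concretely, in the symbolic picture you need: positive recurrence of the countable shift at the critical parameter $h$ and a spectral gap for $L_{-hr}$ (not automatic from \cite{AGY06,AR09}, which give exponential mixing for $\lambda$ but not the transfer-operator statement on the full non-compact shift you would need for a renewal theorem); summable tails for the roof $r$, which has logarithmic singularities in Rauzy--Veech coordinates; and a uniform-in-$R$ bound showing no positive fraction of $\mu_R$-mass escapes every compact set. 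None of these is available off the shelf for arbitrary components of strata of quadratic differentials, and the last one is exactly the content of Proposition~\ref{absolute}. Finally, note that even the paper's method does \emph{not} yield the asymptotic $\sharp\Gamma(R)\sim e^{hR}/hR$ (see the remarks after the Theorem in the introduction); a full renewal argument would, which is another indication that your proposed route requires strictly more input than is actually needed for the weak-convergence statement.
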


The theorem implies that as $R\to \infty$, the
number of periodic orbits in ${\cal Q}$
of period at most $R$ 
is asymptotically not
smaller than  $e^{hR}/hR$. However,  
since the closure in ${\cal Q}(S)$ of a 
component ${\cal Q}$ of a stratum is non-compact, 
we do not obtain a precise asymptotic growth rate 
for all periodic orbits in ${\cal Q}$. 
Namely, there may be a set of periodic orbits in ${\cal Q}$ 
whose growth rate
exceeds $h$ and which eventually exit every compact subset of 
${\cal Q}(S)$. For periodic orbits in the open principal stratum, 
Eskin and Mirzakhani \cite{EM07} showed
that the asymptotic growth rate
of periodic orbits for the Teichm\"uller flow  
which lie deeply in the cusp of moduli space 
is strictly smaller than the entropy $h$, and they 
calculate the asymptotic growth rate of all periodic orbits. 
Eskin, Mirzakhani and Rafi \cite{EMR10} 
also announced the
analogous result  for any component of any stratum.

The proof of the above theorem uses ideas which were developed 
by Margulis for hyperbolic flows (see \cite{Mar04} for 
an account with comments). This strategy is by now standard,
and the main task is to overcome the 
difficulty of absence of 
hyperbolicity for the Teichm\"uller flow 
in the thin part of moduli space and the 
absence of nice product coordinates near a
boundary point of a stratum.

Absence of hyperbolicity in the thin part of moduli space is
dealt with using the curve graph similar to the strategy 
developed in \cite{H07b}. Integration of the Hodge norm
as discussed in \cite{ABEM10} and some standard ergodic theory
is also used.

Relative homology coordinates \cite{V90} define
local product structures for
strata. These coordinates 
do no extend in 
a straightforward way to points in the boundary of the stratum. 
In the case of the principal stratum, however, 
product coordinates about boundary points can 
be obtained by simply writing a quadratic differential
as a pair of its vertical and horizontal measured geodesic 
lamination. 
Our approach is to show that
there is a similar picture for strata. To this end, we use coordinates
for strata based on train tracks which will be
used in other contexts as well. The construction of these coordinates
is carried out in Sections 3 and 4.

The tools developed in Sections 3 and 4 are used in Section 5 to show that
a weak limit $\mu$ of the measures $\mu_R$ is 
absolutely continuous with respect to the Lebesgue measure, with
Radon Nikodym derivative bounded from above by one. 
In Section 6 the proof of the theorem is completed.
Section 2 summarizes some properties of the curve graph and
geodesic laminations used throughout the paper.

\section{Laminations and the curve graph}

Let $S$ be an oriented surface of finite type, i.e. $S$ is a
closed surface of genus $g\geq 0$ from which 
$m\geq 0$ points, so-called \emph{punctures}, have been deleted.
We assume that $3g-3+m\geq 2$,
i.e. that $S$ is not a sphere with at most four
punctures or a torus with at most one puncture.
The \emph{Teichm\"uller space} ${\cal T}(S)$
of $S$ is the quotient of the space of all complete finite volume hyperbolic
metrics on $S$ under the action of the
group of diffeomorphisms of $S$ which are isotopic
to the identity. The
fibre bundle ${\cal Q}^1(S)$
over ${\cal T}(S)$ of all \emph{marked holomorphic
quadratic differentials} of area
one can be viewed as the unit cotangent
bundle of ${\cal T}(S)$ for the \emph{Teichm\"uller metric} $d_{\cal T}$.
We assume that each quadratic differential $q\in {\cal Q}^1(S)$
has a pole of first order at each of the punctures, i.e. we
include the information on the number of poles of the
differential in the number of punctures of $S$.
The \emph{Teichm\"uller flow} $\Phi^t$ on
${\cal Q}^1(S)$ commutes
with the action of the \emph{mapping class group}
${\rm Mod}(S)$ of all isotopy classes of
orientation preserving self-homeomorphisms of $S$.
Therefore this flow descends to a flow
on the quotient orbifold ${\cal Q}(S)=
{\cal Q}^1(S)/{\rm Mod}(S)$, again denoted
by $\Phi^t$. 

\subsection{Geodesic laminations}

A \emph{geodesic lamination} for a complete
hyperbolic structure on $S$ of finite volume is
a \emph{compact} subset of $S$ which is foliated into simple
geodesics.
A geodesic lamination $\nu$ is called \emph{minimal}
if each of its half-leaves is dense in $\nu$. Thus a simple
closed geodesic is a minimal geodesic lamination. A minimal
geodesic lamination with more than one leaf has uncountably
many leaves and is called \emph{minimal arational}.
Every geodesic lamination $\nu$ consists of a disjoint union of
finitely many minimal components and a finite number of isolated
leaves. Each of the isolated leaves of $\nu$ either is an
isolated closed geodesic and hence a minimal component, or it
\emph{spirals} about one or two minimal components.
A geodesic lamination $\nu$ \emph{fills up} $S$ if its complementary
components are topological discs or once punctured monogons, i.e.
once punctured discs bounded by a single leaf of $\nu$.

The set ${\cal L}$ of all geodesic laminations
on $S$ can be equipped with the restriction of 
the \emph{Hausdorff topology} for compact subsets of $S$.
With respect to this topology, the space ${\cal L}$ is
compact. 

The projectivized tangent bundle $PT\nu$ 
of a geodesic lamination $\nu$ 
is a compact subset of the projectivized tangent
bundle $PTS$ of $S$. 
The geodesic lamination $\nu$ is \emph{orientable} if
there is an continuous 
orientation of the tangent bundle of $\nu$. This is
equivalent to stating that there is a continuous section
$PT\nu\to T^1S$ where 
$T^1S$ denotes the unit tangent bundle of $S$.

\begin{definition}\label{largelam}
A \emph{large geodesic lamination} is a geodesic
lamination  
$\nu$ which fills
up $S$ and can be approximated in the
Hausdorff topology by simple closed geodesics. 
\end{definition}

Note that a minimal geodesic lamination $\nu$ can
be approximated in the Hausdorff topology by simple
closed geodesics and hence if $\nu$ fills up $S$ then
$\nu$ is large. Moreover, the set of all large 
geodesic laminations is closed with respect to the
Hausdorff topology and hence it is compact.

The 
\emph{topological type} of a large geodesic lamination
$\nu$ is a tuple 
\[(m_1,\dots,m_\ell;-m)\text{ where }
1\leq m_1\leq \dots \leq m_{\ell},\, \sum_im_i=4g-4+m\] 
such that the complementary
components of $\nu$ which are topological discs are $m_i+2$-gons. 
Let \[{\cal L\cal L}(m_1,\dots,m_\ell;-m)\] be the space of all
large geodesic laminations of type $(m_1,\dots,m_\ell;-m)$ equipped 
with the restriction of the Hausdorff topology for compact
subsets of $S$. A geodesic
lamination is called \emph{complete} if it  is large
of type $(1,\dots,1;-m)$. The
complementary components of a complete geodesic
lamination are all trigons or
once punctured monogons.

A \emph{measured geodesic lamination} is 
a geodesic lamination $\nu$ equipped with a
translation invariant transverse measure $\xi$ such that
the $\xi$-weight of every compact
arc in $S$ with endpoints in $S-\nu$ 
which intersects $\nu$ nontrivially
and transversely is positive. We say
that $\nu$ is the \emph{support} of the 
measured geodesic lamination. 
The geodesic lamination $\nu$ 
is \emph{uniquely ergodic} if $\xi$ is the
only transverse measure with support $\nu$ up to
scale.

The space ${\cal M\cal L}$
of measured geodesic laminations 
equipped with the weak$^*$-topology admits a
natural continuous action of the multiplicative group
$(0,\infty)$. The quotient under this action is
the space ${\cal P\cal M\cal L}$ of 
\emph{projective measured geodesic laminations}
which is homeomorphic to the sphere $S^{6g-7+2m}$.

Every simple closed geodesic $c$ on $S$ defines
a measured geodesic lamination. The geometric
intersection number between simple closed
curves on $S$ extends to a continuous 
function $\iota$ on ${\cal M\cal L}\times {\cal M\cal L}$, the
\emph{intersection form}. We say that
a pair $(\xi,\mu)\in {\cal M\cal L}\times
{\cal M\cal L}$ of measured geodesic laminations
\emph{jointly fills up} $S$ if 
for every measured geodesic
lamination $\eta\in {\cal M\cal L}$ we have
$\iota(\eta,\xi)+\iota(\eta,\mu)>0$. This
is equivalent to stating that every complete 
simple (possibly infinite)
geodesic on $S$ intersects either the support
of $\xi$ or the support of $\mu$ transversely.

\subsection{The curve graph}

The \emph{curve graph} ${\cal C}(S)$ of $S$ is the locally
infinite metric graph whose vertices are the free homotopy classes
of essential simple closed curves on $S$, i.e. curves which
are neither contractible nor freely homotopic into a puncture. 
Two such curves
are connected by an edge of length one if and only if they
can be realized disjointly. The mapping class group 
${\rm Mod}(S)$ of $S$ acts on ${\cal C}(S)$ as a group of 
simplicial isometries.

The curve graph ${\cal C}(S)$ is a hyperbolic geodesic
metric space \cite{MM99} and hence it admits
a \emph{Gromov boundary} $\partial {\cal C}(S)$.
For $c\in {\cal C}(S)$ there is a complete distance
function $\delta_c$ on $\partial{\cal C}(S)$ of 
uniformly bounded diameter, and there is
a number $\rho >0$ such that 
\[\delta_c\leq e^{\rho
d(c,a)}\delta_a\text{ for all }c,a\in {\cal C}(S).\] 
The group ${\rm Mod}(S)$ acts on 
$\partial {\cal C}(S)$ as a group of homeomorphisms.

Let $\kappa_0>0$ be a \emph{Bers constant} for $S$,
i.e. $\kappa_0$ is such that for every complete
hyperbolic metric on $S$ of finite volume 
there is a pants decomposition of $S$
consisting of pants curves of length at most $\kappa_0$.
Define a map 
\begin{equation}\label{upsilont}
\Upsilon_{\cal T}:{\cal T}(S)\to {\cal C}(S)\end{equation}
by associating to $x\in {\cal T}(S)$ 
a simple closed 
curve of $x$-length at most $\kappa_0$.
Then there is a number $c>0$ such that
\begin{equation}\label{distortion}
d_{\cal T}(x,y)\geq d(\Upsilon_{\cal T}(x),\Upsilon_{\cal T}(y))/c-c
\end{equation}
for all $x,y\in {\cal T}(S)$ 
(see the discussion in \cite{H10a}).

For a number $L>1$, 
a map $\gamma:[0,s)\to {\cal C}(S)$ $(s\in (0,\infty])$ is an
\emph{$L$-quasi-geodesic} if 
for all
$t_1,t_2\in [0,s)$ we have
\[\vert t_1-t_2\vert/L-L\leq d(\gamma(t_1),\gamma(t_2))\leq
L\vert t_1-t_2\vert +L.\]
A map $\gamma:[0,\infty)\to {\cal C}(S)$ is called
an \emph{unparametrized $L$-quasi-geodesic} if 
there is an increasing homeomorphism
$\phi:[0,s)\to [0,\infty)$ $(s\in (0,\infty])$ 
such that $\gamma\circ \phi$
is an $L$-quasi-geodesic. 
We say that an unparametrized
quasi-geodesic is \emph{infinite} 
if its image set has infinite diameter. There is a number $p>1$ such that the
image under $\Upsilon_{\cal T}$ of every Teichm\"uller geodesic 
is an unparametrized $p$-quasi-geodesic \cite{MM99}.

Choose
a smooth function $\sigma:[0,\infty)\to [0,1]$ with
$\sigma[0,\kappa_0]\equiv 1$ and
$\sigma[2\kappa_0,\infty)\equiv 0$. For each
$x\in {\cal T}(S)$, the number of essential
simple closed curves $c$
on $S$ whose $x$-length
$\ell_x(c)$ (i.e. the length of a geodesic representative
in its free homotopy class)
does not exceed $2\kappa_0$ is bounded from above by a
constant not depending on $x$, and the diameter of the
subset of ${\cal C}(S)$ containing these curves is uniformly
bounded as well. Thus we obtain for every
$x\in {\cal T}(S)$ a
finite Borel measure $\mu_x$ on ${\cal C}(S)$ by defining
\begin{equation}
\mu_x=\sum_{c\in {\cal C}(S)} \sigma(\ell_x(c))\Delta_c \notag
\end{equation} where
$\Delta_c$ denotes the Dirac mass at $c$. The total mass of
$\mu_x$ is bounded from above and below by a universal positive
constant, and the diameter of the support of $\mu_x$ in ${\cal
C}(S)$ is uniformly bounded as well. Moreover, the measures
$\mu_x$ depend continuously on $x\in {\cal T}(S)$ in the
weak$^*$-topology. This means that for every bounded function
$f:{\cal C}(S)\to \mathbb{R}$ the function $x\to \int f d\mu_x$ is
continuous.

For $x\in {\cal
T}(S)$ define a distance $\delta_x$ on $\partial {\cal C}(S)$ by
\begin{equation}\label{distance}
\delta_x(\xi,\zeta)=\int \delta_c(\xi,\zeta)
d\mu_x(c)/\mu_x({\cal C}(S)).\end{equation}
The distances
$\delta_x$ are equivariant with respect to the
action of ${\rm Mod}(S)$ on ${\cal T}(S)$ and
$\partial{\cal C}(S)$. Moreover, there is a constant
$\kappa >0$ such that
\begin{equation}\label{deltacomparison}
\delta_x\leq e^{\kappa d_{\cal T}(x,y)}\delta_y\text{ and }
\kappa^{-1}\delta_y\leq 
\delta_{\Upsilon_{\cal T}(y)}\leq \kappa \delta_y\end{equation}
for all $x,y\in {\cal T}(S)$ (see p.230 and p.231 of \cite{H09}).

An area one quadratic differential 
$z\in {\cal Q}^1(S)$ is determined by a pair $(\mu,\nu)$ of 
measured geodesic laminations which jointly fill up $S$ and such that
$\iota(\mu,\nu)=1$. The laminations $\mu,\nu$ are called 
\emph{vertical} and \emph{horizontal}, respectively.
For $z\in {\cal Q}^1(S)$ let $W^{u}(z)\subset {\cal Q}^1(S)$ 
be the set of all quadratic
differentials whose horizontal projective measured geodesic lamination 
coincides with the horizontal projective measured geodesic lamination of $z$.
The space $W^u(z)$ is called
the \emph{unstable} manifold of $z$, and these unstable
manifolds define the \emph{unstable foliation} $W^u$ of ${\cal Q}^1(S)$. 
The \emph{strong unstable manifold} $W^{su}(z)\subset
W^u(z)$ is the set of all quadratic differentials whose
horizontal measured geodesic lamination coincides with the horizontal
measured geodesic lamination of $z$. 
These sets define the \emph{strong unstable} 
foliation $W^{su}$ of ${\cal Q}^1(S)$.
The image of the unstable (or the strong unstable) foliation
of ${\cal Q}^1(S)$ under the flip ${\cal F}:q\to {\cal F}(q)=-q$
is the \emph{stable} foliation $W^s$ (or the 
\emph{strong stable foliation} $W^{ss}$).

By the Hubbard-Masur theorem, for each $z\in {\cal Q}^1(S)$ 
the restriction to $W^{u}(z)$ 
of the canonical projection
\[P:{\cal Q}^1(S)\to {\cal T}(S)\]
is a homeomorphism. Thus the Teichm\"uller metric lifts to 
a complete distance function $d^u$ on $W^u(z)$. Denote by $d^{su}$ 
the restriction of this distance function to $W^{su}(z)$.
Then $d^s=d^u\circ {\cal F},d^{ss}=d^{su}\circ {\cal F}$
are distance functions on the leaves of the stable and
strong stable foliation, respectively.
For $z\in {\cal Q}^1(S)$ and $r>0$ let moreover
$B^i(z,r)\subset W^i(z)$ be the closed ball of radius $r$ about $z$ 
with respect to $d^i$ $(i=u,su,s,ss)$.

Let 
\begin{equation}\label{tildea}
\tilde {\cal A}\subset {\cal Q}^1(S)\end{equation} 
be the
set of all marked quadratic differentials
$q$ such that the unparametrized quasi-geodesic
$t\to \Upsilon_{\cal T}(P\Phi^tq)$ $(t\in [0,\infty))$ is infinite.
Then $\tilde {\cal A}$
is the set of all quadratic differentials
whose vertical measured geodesic lamination fills up $S$
(i.e. its support fills up $S$, 
see \cite{H06} for a comprehensive discussion of this result
of Klarreich \cite{Kl99}).
There is a natural ${\rm Mod}(S)$-equivariant 
surjective map 
\[F:\tilde {\cal A}\to
\partial {\cal C}(S)\] which associates to a point
$z\in \tilde{\cal A}$ 
the endpoint of the infinite unparametrized quasi-geodesic
$t\to \Upsilon_{\cal T}(P\Phi^tq)$ $(t\in [0,\infty))$.
 
Call a marked quadratic differential $z\in {\cal Q}^1(S)$ 
\emph{uniquely ergodic} if the support of its vertical measured geodesic
lamination is uniquely ergodic and fills up $S$.
A uniquely ergodic quadratic differential is contained
in the set $\tilde{\cal A}$ \cite{H06,Kl99}.
 We have (Section 3 of \cite{H09})

\begin{lemma}\label{closed}
\begin{enumerate}
\item
The map $F:\tilde {\cal A}\to \partial{\cal C}(S)$ is 
continuous and closed.
\item If $z\in {\cal Q}^1(S)$ is uniquely ergodic then the sets
$F(B^{su}(z,r)\cap \tilde{\cal A})$ $(r>0)$ form a neighborhood basis for 
$F(z)$ in $\partial{\cal C}(S)$.
\end{enumerate}
\end{lemma}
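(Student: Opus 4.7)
I would handle (1) by proving continuity and closedness separately. For \emph{continuity}, the identity $F(z)=\lim_{t\to\infty}\Upsilon_{\cal T}(P\Phi^tz)$ in $\partial{\cal C}(S)$, combined with the cited Masur--Minsky fact that $t\mapsto \Upsilon_{\cal T}(P\Phi^tz)$ is an unparametrized $p$-quasi-geodesic in the hyperbolic space ${\cal C}(S)$, reduces the problem to convergence of endpoints of quasi-geodesics. If $z_n\to z$ in $\tilde{\cal A}$ then $\Phi^tz_n\to \Phi^tz$ in ${\cal Q}^1(S)$ uniformly on compact $t$-intervals; since short curves on a hyperbolic surface are stable under small deformations, the vertices $\Upsilon_{\cal T}(P\Phi^tz_n)$ stay within uniformly bounded curve-graph distance of $\Upsilon_{\cal T}(P\Phi^tz)$ over such intervals. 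Morse stability in the $\delta$-hyperbolic space ${\cal C}(S)$ then forces convergence of the endpoints in any visual metric on $\partial{\cal C}(S)$, yielding $F(z_n)\to F(z)$.

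For \emph{closedness}, given closed $K\subset\tilde{\cal A}$ and $z_n\in K$ with $F(z_n)\to \xi$, I analyze the vertical measured laminations $\mu_n$ of $z_n$. By Klarreich's description of $\partial{\cal C}(S)$ as topological equivalence classes of minimal filling laminations, $F(z_n)\to\xi$ forces the supports $\operatorname{supp}(\mu_n)$, along a subsequence, to Hausdorff-converge to a minimal filling lamination $\nu$ representing $\xi$. Using compactness in ${\cal P\cal M\cal L}$ for both vertical and horizontal projective classes together with Masur's criterion to rule out escape to the boundary of the stratum, one extracts a convergent subsequence of $z_n$ in ${\cal Q}^1(S)$ with limit $z$; then $z\in K$ by closedness of $K$, $z\in \tilde{\cal A}$ since its vertical lamination has support $\nu$, and $F(z)=\xi$ by continuity.

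For (2), fix a uniquely ergodic $z\in\tilde{\cal A}$ with vertical measured lamination $\mu_z$ (support $\nu_z$) and horizontal $\eta_z$. That the sets $F(B^{su}(z,r)\cap\tilde{\cal A})$ shrink to $\{F(z)\}$ as $r\to 0$ follows from continuity. For the reverse inclusion, let $\xi_n\to F(z)$ in $\partial{\cal C}(S)$. By Klarreich and the unique ergodicity of $\nu_z$ (whose Klarreich equivalence class is the singleton $\{\nu_z\}$), representative minimal filling laminations $\nu_n$ for $\xi_n$ Hausdorff-converge to $\nu_z$. For large $n$, $\nu_n$ and $\eta_z$ jointly fill $S$, so there is a transverse measure $\mu_n$ with support $\nu_n$ and $\iota(\mu_n,\eta_z)=1$; standard compactness in ${\cal M\cal L}$ together with minimality and unique ergodicity of $\nu_z$ force $\mu_n\to \mu_z$ in ${\cal M\cal L}$. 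The quadratic differential $w_n$ with vertical $\mu_n$ and horizontal $\eta_z$ lies in $W^{su}(z)\cap\tilde{\cal A}$, satisfies $F(w_n)=\xi_n$, and converges to $z$ in ${\cal Q}^1(S)$; hence $w_n\in B^{su}(z,r)$ for large $n$, showing that $F(B^{su}(z,r)\cap\tilde{\cal A})$ contains a tail of every sequence converging to $F(z)$, and is therefore a neighborhood of $F(z)$.

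The main technical difficulty is the closedness in (1): since $\tilde{\cal A}$ and $K$ are non-compact, controlling the limit of $z_n$ in the marked space ${\cal Q}^1(S)$ (rather than only at the level of projective laminations) requires combining Klarreich's parametrization of $\partial{\cal C}(S)$ with Masur's criterion in an essential way.
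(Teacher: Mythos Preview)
The paper gives no proof of this lemma; it is quoted from Section~3 of \cite{H09}. Your sketch of continuity in (1) and your argument for (2) are sound in outline; for (2) it is cleaner to use that Klarreich's map from the filling part of ${\cal P\cal M\cal L}$ to $\partial{\cal C}(S)$ has compact fibers (simplices of projective transverse measures), so that any lifts $[\mu_n]\in{\cal P\cal M\cal L}$ of $\xi_n\to F(z)$ subconverge to a lift of $F(z)$, which by unique ergodicity must be $[\mu_z]$. This replaces your Hausdorff-convergence step, which is not quite what Klarreich's theorem delivers directly.

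The closedness argument in (1), however, has a genuine gap. You try to extract a subsequence of $(z_n)$ converging in ${\cal Q}^1(S)$, appealing to compactness of ${\cal P\cal M\cal L}$ and to ``Masur's criterion''. This fails for unbounded $K$: if $g\in{\rm Mod}(S)$ is pseudo-Anosov and $z_n=g^nz_0$ with $z_0\in\tilde{\cal A}$, then $\{z_n\}$ is discrete (hence closed) in ${\cal Q}^1(S)$ since $g^nPz_0\to\infty$ in ${\cal T}(S)$, while $F(z_n)=g^nF(z_0)$ converges to the attracting fixed point of $g$ in $\partial{\cal C}(S)$. Here the projective vertical and horizontal laminations of $z_n$ both converge to the \emph{same} point of ${\cal P\cal M\cal L}$, so the limiting pair does not jointly fill and no limit of $z_n$ in ${\cal Q}^1(S)$ exists; Masur's criterion, which concerns recurrence of a single Teichm\"uller ray, is irrelevant to compactness of such a sequence of basepoints. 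What is actually true, and what the applications in this paper use (cf.\ the proof of Lemma~\ref{borelregular}), is closedness of $F$ restricted to compact pieces of a fixed strong unstable leaf. This reduces to the closedness of Klarreich's map at the lamination level, i.e.\ to the statement that if $z_n\in\tilde{\cal A}$, $z_n\to z$ in ${\cal Q}^1(S)$ and $F(z_n)$ converges in $\partial{\cal C}(S)$, then $z\in\tilde{\cal A}$. Your argument does establish this once the flawed subsequence-extraction step is replaced by compactness of the ambient ball $B^{su}(z,r)$.
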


For $z\in \tilde {\cal A}$ and $r>0$ let 
\[D(z,r)\]
be the closed ball of radius $r$ about $F(z)$ with respect to the
distance function $\delta_{Pz}$. 
As a consequence of Lemma \ref{closed}, if $z\in 
{\cal Q}^1(S)$ is uniquely ergodic then for every $r>0$
there are numbers $r_0<r$ and $\beta >0$ such that
\begin{equation}F(B^{su}(z,r_0)\cap \tilde {\cal A})
\subset D(z,\beta)
\subset
F(B^{su}(z,r)\cap\tilde {\cal A}).\end{equation}

\section{Train tracks}

In this section we establish some properties of train tracks
on  an oriented surface $S$ of
genus $g\geq 0$ with $m\geq 0$ punctures and $3g-3+m\geq 2$
which will be used in Section 4 
to construct coordinates near boundary points
of strata.

A \emph{train track} on $S$ is an embedded
1-complex $\tau\subset S$ whose edges
(called \emph{branches}) are smooth arcs with
well-defined tangent vectors at the endpoints. At any vertex
(called a \emph{switch}) the incident edges are mutually tangent.
Through each switch there is a path of class $C^1$
which is embedded
in $\tau$ and contains the switch in its interior. 
A simple closed curve component of $\tau$ contains
a unique bivalent switch, and all other switches are at least
trivalent.
The complementary regions of the
train track have negative Euler characteristic, which means
that they are different from discs with $0,1$ or
$2$ cusps at the boundary and different from
annuli and once-punctured discs
with no cusps at the boundary.
We always identify train
tracks which are isotopic.
Throughout we use the book \cite{PH92} as the main reference for 
train tracks. 

A train track is called \emph{generic} if all switches are
at most trivalent. For each switch $v$ of 
a generic train track $\tau$ which is not contained in 
a simple closed curve component, there is a unique
half-branch $b$ of $\tau$ which is incident on $v$ and which is
\emph{large} at $v$. This means that every germ of an
arc of class $C^1$ on $\tau$ which passes through $v$ also
passes through the interior of $b$. 
A half-branch which is not large is called \emph{small}.
A branch $b$ of $\tau$ is
called \emph{large} (or \emph{small}) if each of its
two half-branches is large (or small). A branch which 
is neither large nor small is called \emph{mixed}.

\bigskip
{\bf Remark:} As in \cite{H09a}, all train tracks
are assumed to be generic. Unfortunately this leads to 
a small inconsistency of our terminology with the
terminology found in the literature.

\bigskip

A \emph{trainpath} on a train track $\tau$ is a
$C^1$-immersion $\rho:[k,\ell]\to \tau$ such that
for every $i< \ell-k$ the restriction of $\rho$ to 
$[k+i,k+i+1]$ is a homeomorphism onto a branch of $\tau$.
More generally, we call a $C^1$-immersion $\rho:[a,b]\to \tau$
a \emph{generalized trainpath}. 
A trainpath $\rho:[k,\ell]\to \tau$ is \emph{closed}
if $\rho(k)=\rho(\ell)$ and if the extension $\rho^\prime$
defined by $\rho^\prime(t)=\rho(t)$ 
$(t\in [k,\ell])$ and 
$\rho^\prime(\ell+s)=\rho(k+s)$ $(s\in [0,1])$ is a
trainpath.

A generic 
train track $\tau$ is \emph{orientable} 
if there is a consistent orientation of the 
branches of $\tau$ such that 
at any switch $s$ of $\tau$, the orientation of the large
half-branch incident on $s$ extends to the orientation
of the two small half-branches incident on $s$.
If $C$ is a complementary polygon of an oriented
train track then the number of sides of $C$ is even.
In particular, a train track which contains a once
punctured monogon component,  
i.e. a once punctured disc with
one cusp at the boundary, is not orientable
(see p.31 of \cite{PH92} for 
a more detailed discussion).

A train track or a geodesic lamination $\eta$ is
\emph{carried} by a train track $\tau$ if
there is a map $F:S\to S$ of class $C^1$ which is homotopic to the
identity and maps $\eta$ into $\tau$ in such a way 
that the restriction of the differential of $F$
to the tangent space of $\eta$ vanishes nowhere;
note that this makes sense since a train track has a tangent
line everywhere. We call the restriction of $F$ to
$\eta$ a \emph{carrying map} for $\eta$.
Write $\eta\prec
\tau$ if the train track $\eta$ is carried by the train track
$\tau$. Then every geodesic lamination $\nu$ which is carried
by $\eta$ is also carried by $\tau$.

A train track \emph{fills up} $S$ if its complementary
components are topological discs or once punctured 
monogons.  Note that such a train track
$\tau$ is connected.
Let $\ell\geq 1$ be the number of those complementary 
components of $\tau$ which are topological discs.
Each of these discs is an $m_i+2$-gon for some $m_i\geq 1$
$(i=1,\dots,\ell)$. The
\emph{topological type} of $\tau$ is defined to be
the ordered tuple $(m_1,\dots,m_\ell;-m)$ where
$1\leq m_1\leq \dots \leq m_\ell$; then $\sum_im_i=4g-4+m$.
If $\tau$ is orientable then $m=0$ and $m_i$ is even 
for all $i$. 
A train track of topological type $(1,\dots,1;-m)$ is called 
\emph{maximal}. The complementary components
of a maximal train track are all trigons,
i.e. topological discs with three cusps at the boundary,
or once punctured monogons.

A \emph{transverse measure} on a generic train track $\tau$ is a
nonnegative weight function $\mu$ on the branches of $\tau$
satisfying the \emph{switch condition}:
for every trivalent switch $s$ of $\tau$,  the sum of the weights
of the two small half-branches incident on $s$ 
equals the weight of the large half-branch.
The space ${\cal V}(\tau)$ of all transverse measures
on $\tau$ has the structure of a cone in a finite dimensional
real vector space, and
it is naturally homeomorphic to the
space of all measured geodesic laminations whose
support is carried by $\tau$.
The train track is called
\emph{recurrent} if it admits a transverse measure which is
positive on every branch. We call such a transverse measure $\mu$
\emph{positive}, and we write $\mu>0$ (see \cite{PH92} for 
more details).

A \emph{subtrack} $\sigma$ of a train track $\tau$
is a subset of $\tau$ which is itself a train track.
Then $\sigma$ is obtained from $\tau$ by removing some
of the branches, and 
we write $\sigma <\tau$.
If $b$ is a small branch of $\tau$ which is incident on two
distinct switches of $\tau$ then
the graph $\sigma$
obtained from $\tau$ by removing $b$ is a subtrack of $\tau$.
We then call $\tau$ a 
\emph{simple extension} of $\sigma$.
Note that formally to obtain the subtrack $\sigma $ from $\tau-b$
we may have to delete the switches on which the branch $b$ is incident. 

\begin{lemma} \label{simpleex}
\begin{enumerate}
\item
A simple extension $\tau$ of 
a recurrent non-orientable connected train track $\sigma$
is recurrent. Moreover,
\[{\rm dim}{\cal V}(\sigma)={\rm dim}{\cal V}(\tau)-1.\]
\item An orientable simple extension $\tau$ of a recurrent orientable 
connected train track $\sigma$
is recurrent. Moreover,
\[{\rm dim}{\cal V}(\sigma)={\rm dim}{\cal V}(\tau)-1.\]
\end{enumerate}
\end{lemma}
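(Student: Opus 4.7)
Both conclusions will follow from a single construction: producing a transverse measure on $\tau$ that is positive on the new branch $b$. Recall the setup: $b$ is small at both of its two distinct endpoints $v_1,v_2$, and removing $b$ turns each $v_i$ into a smooth interior point of a branch $\beta_i$ of $\sigma$ obtained by concatenating the large half-branch at $v_i$ with the other small half-branch at $v_i$. Let $\tilde V(\eta)$ denote the real linear span of ${\cal V}(\eta)$, i.e., the space of real weight assignments on branches of $\eta$ satisfying all switch conditions. The linear map $p:\tilde V(\tau)\to\mathbb{R}$, $p(\mu)=\mu(b)$, has kernel canonically identified with $\tilde V(\sigma)$: if $\mu(b)=0$, the switch condition at $v_i$ degenerates to equality of the weights on the large and other-small half-branches, so the weights on the two sides of $v_i$ inside $\beta_i$ agree and $\mu$ descends to a transverse measure on $\sigma$; conversely, any transverse measure on $\sigma$ lifts to $\tau$ by setting $\mu(b)=0$.

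Therefore both the dimension formula and the recurrence of $\tau$ reduce to exhibiting a single $\nu\in\tilde V(\tau)$ with $\nu(b)>0$: surjectivity of $p$ gives $\dim \tilde V(\tau)=\dim \tilde V(\sigma)+1$, while adding $\nu$ to the lift of a positive transverse measure on $\sigma$ (provided by recurrence of $\sigma$ and positive on every branch of $\tau$ except $b$) yields a transverse measure on $\tau$ that is positive on every branch. I would produce $\nu$ as the counting measure of a closed trainpath on $\tau$ passing through $b$; such a counting measure automatically satisfies the switch conditions and has multiplicity at least one on $b$.

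The construction of this closed trainpath is the main obstacle. It must traverse $b$ from, say, $v_1$ to $v_2$, then exit $v_2$ along its large half-branch (since $b$ is small at $v_2$ and a trainpath cannot turn from one small half-branch to another), then run through $\sigma$ and re-enter $v_1$ along the large half-branch at $v_1$ before closing up along $b$. So one needs a trainpath in $\sigma$ from $v_2$ to $v_1$ with prescribed large-side directions at both endpoints, which is a question about the connectedness of the digraph of directed trainpaths on $\sigma$. In case (1), non-orientability of $\sigma$ means exactly that the orientation double cover of $\sigma$ is connected; together with recurrence of $\sigma$, which guarantees, via the decomposition of a positive integer transverse measure into closed trainpaths, that every branch of $\sigma$ is contained in some closed trainpath, a concatenation argument at common switches realizes any pair of directed points as the endpoints of some trainpath. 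In case (2), the orientation of $\tau$ restricts to $\sigma$, and the switch conditions with orientation at $v_1,v_2$ force the two prescribed large-side directions to lie in the same sheet of the orientation double cover of $\sigma$; the same concatenation argument, carried out within this single sheet and again using recurrence of $\sigma$, produces the required trainpath.
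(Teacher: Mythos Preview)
Your proof is correct and follows essentially the same approach as the paper: both arguments hinge on constructing a closed trainpath on $\tau$ through the new branch $b$, using that recurrence together with the appropriate orientability hypothesis lets one connect the two prescribed directed points in $\sigma$ by a trainpath. The only organizational difference is in the dimension count: the paper uses a linear ``replacement'' map $A$ sending the basis vector of $b$ to the counting measure of an auxiliary trainpath $\rho_0$ in $\sigma$, whereas you use the cleaner evaluation map $p(\mu)=\mu(b)$ and identify its kernel with $\tilde V(\sigma)$ directly---your version is arguably more transparent, but the content is the same.
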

\begin{proof}
If $\tau$ is a simple extension of a 
train track $\sigma$ then $\sigma$ can be
obtained from $\tau$ by the removal of a small branch $b$ which
is incident on two distinct switches
$s_1,s_2$. Then $s_i$ is an interior point of a branch
$b_i$ of $\sigma$ $(i=1,2)$. 

If $\sigma$ is connected, non-orientable and recurrent then there is 
a trainpath $\rho_0:[0,t]\to 
\tau-b$ which begins at $s_1$, ends at $s_2$ and such that
the half-branch $\rho_0[0,1/2]$ is small at $s_1=\rho_0(0)$ and  
that the half-branch $\rho_0[t-1/2,t]$ is small at $s_2=\rho_0(t)$. 
Extend $\rho_0$ to a closed trainpath $\rho$ on $\tau -b$ 
which begins and ends at $s_1$. This is possible since
$\sigma$ is non-orientable, connected and recurrent.
There is a closed trainpath $\rho^\prime:[0,u]\to \tau$ 
which can be obtained from $\rho$ by replacing the trainpath $\rho_0$ by
the branch $b$ traveled through
from $s_1$ to $s_2$. The counting measure of $\rho^\prime$ on $\tau$ 
satisfies the switch condition and hence it
defines a transverse measure on $\tau$ which is positive on $b$.
On the other hand, every transverse measure on $\sigma$
defines a transverse measure on $\tau$. Thus 
since $\sigma$ is recurrent and since the 
sum of two transverse measures on $\tau$ is 
again a transverse
measure,  the train track $\tau$ is recurrent as well.
Moreover, we have 
${\rm dim}{\cal V}(\tau)\geq {\rm dim}{\cal V}(\sigma)+1$.

Let
$p$ be the number of branches of $\tau$.
Label the branches of $\tau$ 
with the numbers $\{1,\dots,p\}$ so that the number $p$ is 
assigned to $b$. 
Let $e_1,\dots,e_p$ be the standard basis of $\mathbb{R}^p$ and
define a linear
map $A:\mathbb{R}^p\to \mathbb{R}^p$ by 
$A(e_i)=e_i$ for $i\leq p-1$ and 
$A(e_p)=\sum_i\nu(i)e_i$ where
$\nu$ is the weight function on $\{1,\dots,p\}$ defined by the
trainpath $\rho_0$. The map $A$ is a surjection onto a 
linear subspace of $\mathbb{R}^p$ of codimension one, moreover
$A$ preserves the linear subspace $V$ of $\mathbb{R}^p$ defined
by the switch conditions for $\tau$. In particular,
the corank of $A(V)$ is at most one.
The image under $A$ of the cone of all 
nonnegative weights
on the branches of $\tau$ satisfying the switch conditions is
contained in the cone of all nonnegative weights on $\tau-b=\sigma$ 
satisfying the switch
conditions for $\sigma$. Therefore the
dimension of the space of transverse measures on 
$\sigma$ equals the space of transverse measures on 
$\tau$ minus one. This implies ${\rm dim}({\cal V}(\tau)=
{\rm dim}({\cal V}(\sigma)+1$ and 
completes the proof of the first part of 
the lemma. The second part follows in exactly the same way.
\end{proof}

As a consequence we obtain

\begin{corollary}\label{dimensioncount}
\begin{enumerate}
\item
${\rm dim}{\cal V}(\tau)= 2g-2+m+\ell$
for every non-orientable recurrent
train track $\tau$ of topological type $(m_1,\dots,m_\ell;-m)$.
\item ${\rm dim}{\cal V}(\tau)= 2g-1+\ell$ for every orientable
recurrent train track $\tau$ of topological type $(m_1,\dots,m_\ell;0)$.
\end{enumerate}
\end{corollary}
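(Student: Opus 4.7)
The plan is to apply Lemma \ref{simpleex} by induction, reducing to the case of a train track with the smallest possible complementary polygons, where $\dim \mathcal{V}(\tau)$ can be read off from the ambient space of measured laminations.

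First, starting from a recurrent non-orientable (resp.\ orientable) filling train track $\tau$ of topological type $(m_1,\dots,m_\ell;-m)$, I would show that if some $m_i>1$ (resp.\ $m_i>2$), then inside the corresponding $(m_i+2)$-gonal complementary component $P$ one can inscribe a small arc $b$ joining the interiors of two sides of $P$, meeting each side tangentially at a new trivalent switch, and cutting off a trigon (resp.\ a quadrilateral) from $P$. The resulting train track $\tau'=\tau\cup b$ is then a simple extension of $\tau$ in the sense of Lemma \ref{simpleex}; it has one more polygonal complementary component, and it lies in the same orientability class as $\tau$. Iterating this procedure, after $(4g-4+m)-\ell$ steps in the non-orientable case (resp.\ $(2g-2)-\ell$ steps in the orientable case) one arrives at a recurrent train track $\tau^*$ of type $(1,\dots,1;-m)$ (resp.\ $(2,\dots,2;0)$).

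By Lemma \ref{simpleex}, each such extension raises $\dim \mathcal{V}$ by exactly one. In the non-orientable case $\mathcal{V}(\tau^*)$ is a top-dimensional cone in the space $\mathcal{ML}$, which has dimension $6g-6+2m$, and therefore
\[\dim \mathcal{V}(\tau) = (6g-6+2m) - \bigl((4g-4+m)-\ell\bigr) = 2g-2+m+\ell.\]
In the orientable case $\mathcal{V}(\tau^*)$ is a top-dimensional cone in the space of orientable measured foliations on the closed surface $S$, which has real dimension $4g-3$ (the real dimension of period coordinates on the principal stratum of abelian differentials), and therefore
\[\dim \mathcal{V}(\tau) = (4g-3) - \bigl((2g-2)-\ell\bigr) = 2g-1+\ell.\]

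The main obstacle I expect is the verification that at each reduction step the arc $b$ can be realized as a simple extension in the prescribed orientability class. That $b$ is small at both endpoints is automatic for a tangentially inscribed arc at trivalent switches. Non-orientability of $\tau'$ follows since $\tau'$ contains the non-orientable subtrack $\tau$. For the orientable case, the orientation of $\tau$ induces an orientation on $b$ via each of its two endpoints, and these two induced orientations must agree; this compatibility can be arranged because the $m_i+2$ cusps on the boundary of an orientable complementary polygon carry alternating orientations, so one may always choose the two sides met by $b$ so that the orientations are compatible with an oriented $b$ cutting off a quadrilateral.
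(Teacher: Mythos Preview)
Your argument for part (1) is essentially identical to the paper's: subdivide complementary polygons into trigons by successively adding small branches, apply Lemma~\ref{simpleex} at each step, and use the known dimension $6g-6+2m$ of ${\cal V}(\eta)$ for a maximal recurrent train track from \cite{PH92}.

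For part (2) you take a genuinely different route. The paper does \emph{not} stay in the orientable category; instead it adds to the orientable track $\tau$ a single small branch $b_0$ creating a trigon (the result is non-orientable and not recurrent), then a second small branch $b_1$ so that the resulting non-orientable track $\eta$ is recurrent and every transverse measure gives equal weight to $b_0$ and $b_1$. This forces ${\rm dim}\,{\cal V}(\eta)={\rm dim}\,{\cal V}(\tau)+1$, and then part (1) applied to $\eta$ gives the answer. The advantage of the paper's reduction is that the base case is the maximal non-orientable case, which is already available from \cite{PH92}.

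Your approach, reducing instead to an orientable track $\tau^*$ of type $(2,\dots,2;0)$, is perfectly viable, but the base case needs a cleaner justification. The phrase ``top-dimensional cone in the space of orientable measured foliations'' does not name a standard object, and invoking ``period coordinates on the principal stratum of abelian differentials'' is circular here: the identification of ${\cal V}(\tau^*)$ with a chart on that stratum is precisely what Proposition~\ref{structure} establishes later, using the present corollary. What you actually need is the elementary linear-algebra fact that for a connected orientable generic train track the switch conditions have rank $V-1$ rather than $V$ (an orientation gives a signed sum of all switch conditions that vanishes identically), so that ${\rm dim}\,{\cal V}(\tau^*)=B-(V-1)$; a direct Euler-characteristic count with $F=2g-2$ quadrilateral faces then yields $4g-3$. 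With that computation in place your argument is complete; without it, the base case is a gap.
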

\begin{proof}
The disc components of a 
non-orientable recurrent 
train track $\tau$ of topological type $(m_1,\dots,m_\ell;-m)$ 
can be  subdivided
in $4g-4+m-\ell$ steps into trigons by successively adding small
branches. A successive application of Lemma \ref{simpleex} shows
that the resulting train track $\eta$ is maximal and
recurrent. Since 
for every maximal recurrent train track $\eta$ we have
${\rm dim}{\cal V}(\eta)=6g-6+2m$ (see \cite{PH92}), 
the first part of the corollary
follows.

To show the second part, let $\tau$ be an orientable recurrent train track
of type $(m_1,\dots,m_\ell;0)$. Then $m_i$ is even
for all $i$.
Add a branch $b_0$ to $\tau$ which
cuts some complementary component of $\tau$ into a trigon
and a second polygon with an odd number of sides. The resulting
train track $\eta_0$ is not recurrent since a trainpath on $\eta_0$
can only pass through $b_0$ at most once. However, we can add 
to $\eta_0$ another small branch $b_1$ which cuts some complementary
component of $\eta_0$ with at least 4 sides into a trigon and a second 
polygon such that the resulting
train track $\eta$ is non-orientable
and recurrent. The inward pointing tangent of $b_1$
is chosen in such a way that there is a trainpath traveling
both through $b_0$ and $b_1$. The counting measure of any simple 
closed curve which is carried by $\eta$ gives equal weight to the branches
$b_0$ and $b_1$. But this just means that ${\rm dim}{\cal V}(\eta)=
{\rm dim}{\cal V}(\tau)+1$ (see the proof of Lemma \ref{simpleex}
for a detailed argument). 
By the first part of the corollary,
we have ${\rm dim}{\cal V}(\eta)=2g-2+\ell +2$ which completes the proof.
\end{proof}

\begin{definition}\label{deffullyrec} 
A train track $\tau$ of topological type $(m_1,\dots,m_\ell;-m)$
is \emph{fully recurrent} if $\tau$  carries
a large minimal geodesic lamination 
$\nu\in {\cal L\cal L}(m_1,\dots,m_\ell;-m)$.
\end{definition}

Note that by definition, a fully recurrent train track is connected
and fills up $S$. The next lemma gives
some first property of a fully recurrent train track $\tau$. 
For its
proof, recall that 
there is a natural homeomorphism of ${\cal V}(\tau)$ onto the
subspace of ${\cal M\cal L}$ of all measured geodesic laminations
carried by $\tau$.

\begin{lemma}\label{fullyrec}
A fully recurrent
train track $\tau$ of topological type
$(m_1,\dots,m_\ell;-m)$ 
is recurrent.
\end{lemma}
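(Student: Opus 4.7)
The plan is to construct a transverse measure on $\tau$ that is positive on every branch. Since $\tau$ is fully recurrent, it carries a large minimal lamination $\nu\in{\cal L\cal L}(m_1,\dots,m_\ell;-m)$; by minimality any nonzero transverse measure $\xi$ on $\nu$ has full support. Pushing $\xi$ forward under a carrying map $F\colon S\to S$ gives a transverse measure $\mu$ on $\tau$ defined by $\mu(b)=\xi(F^{-1}(b)\cap\nu)$, so it suffices to verify $\mu(b)>0$ for every branch $b$ of $\tau$.

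Suppose for contradiction that $\mu(b)=0$ for some branch $b$. Then $F(\nu)\subset\tau\setminus b$, so $\nu$ is carried by the subtrack $\tau'$ obtained from $\tau$ by removing $b$ and absorbing the two resulting bivalent switches. Since $\tau$ is generic trivalent with, say, $V$ switches, $\tau'$ is generic trivalent with exactly $V-2$ switches. The total number of cusps of the complementary regions of $\tau$ is $\sum_i(m_i+2)+m=4g-4+2\ell+2m$ (the $\ell$ disc regions contribute $m_i+2$ cusps each and the $m$ once-punctured monogons contribute one cusp each); since each trivalent switch contributes exactly one cusp, $V=4g-4+2\ell+2m$, and the same count applied to $\nu$ gives that $\nu$ also has exactly $V$ cusps.

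In the tie neighborhood description of the carrying relation $\nu\subset N(\tau')$, a cusp of $\nu$ is a point where two leaves of $\nu$ become asymptotic; since ties are non-degenerate except at switches of $\tau'$, each such cusp must lie at a switch of $\tau'$, and the local tangent structure at a trivalent switch permits only one cusp of $\nu$ per switch. Thus the $V$ cusps of $\nu$ would have to fit among the $V-2$ switches of $\tau'$, which is impossible. We conclude $\mu(b)>0$ for every branch $b$, so $\tau$ is recurrent. The principal subtlety is the claim that at most one cusp of $\nu$ can lie at each switch of the carrier, which requires analyzing the local tangent structure of leaves of $\nu$ near a trivalent switch in the tie-neighborhood picture.
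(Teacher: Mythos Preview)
Your strategy is sound and close in spirit to the paper's: both arguments exploit the fact that $\nu$ and $\tau$ have the same topological type to force the carrying map $\nu\to\tau$ to be surjective, which then yields a positive transverse measure. The paper phrases this as a bijection between complementary components (and defers the remaining step to \cite{H09a}), while you phrase it as a cusp count. That is a legitimate alternative route and has the advantage of being self-contained.

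There is, however, a genuine gap. You assert that removing the branch $b$ and ``absorbing the two resulting bivalent switches'' produces a generic trivalent subtrack $\tau'$ with exactly $V-2$ switches. This is only correct when $b$ is a \emph{small} branch incident on two distinct switches. If $b$ is large at a switch $v$, then $\tau\setminus b$ has two small half-branches meeting at $v$ with no large half-branch, which is not a valid train track configuration at all; if $b$ is mixed, one endpoint has the same problem. You have not argued that a zero-weight \emph{small} branch exists, and in fact one can have closed trainpaths of zero-weight mixed branches, so this needs work. The clean fix is to pass instead to the subtrack $\sigma\subset\tau$ consisting of \emph{all} branches of positive $\mu$-weight. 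The switch condition guarantees that $\sigma$ is a genuine train track (at any switch either all three incident branches survive, or exactly one small branch is dropped leaving a bivalent point to absorb, or all three are dropped), and $\sigma$ carries $\nu$. Since at least one switch of $\tau$ loses a branch, $\sigma$ has strictly fewer trivalent switches than $\tau$, and your cusp-count contradiction then goes through with $\sigma$ in place of $\tau'$.

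A secondary point: the ``at most one cusp of $\nu$ per switch'' step, which you correctly flag as the crux, deserves a slightly fuller argument. The statement is that along the singular tie at a trivalent switch of the carrier, the leaves of $\nu$ are linearly ordered and split into two packets (one per small branch); there is a single dividing gap, and hence at most one cusp of a complementary region of $\nu$ can sit there. Your sketch points in the right direction but would benefit from making this ordering explicit.
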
 
\begin{proof}
A fully recurrent train track $\tau$ of type
$(m_1,\dots,m_\ell;-m)$ carries a large
geodesic lamination $\nu\in {\cal L\cal L}(m_1,\dots,m_\ell;-m)$.
The carrying map $\nu\to \tau$ induces
a bijection between the complementary
components of $\tau$ and the complementary components of $\nu$.
In particular, a carrying map $\nu\to \tau$ is necessarily
surjective. 
The third paragraph in the proof of Lemma 2.3 of \cite{H09a} now shows
that $\tau$ is recurrent.
\end{proof}

There are two simple ways to modify a fully recurrent
train track $\tau$
to another fully recurrent train track.
Namely, if $b$ is a mixed branch of $\tau$ 
then we can \emph{shift} $\tau$
along $b$ to a new train track $\tau^\prime$. 
This new train track carries $\tau$ and hence it 
is fully recurrent since it carries 
every geodesic lamination
which is carried by $\tau$ \cite{PH92,H09a}.

Similarly, 
if $e$ is a large branch of $\tau$ then we can perform a
right or left \emph{split} of $\tau$ at $e$
as shown in Figure A below.
A (right or left) split $\tau^\prime$ of a 
train track $\tau$ is carried
by $\tau$. 
If $\tau$ is of topological type 
$(m_1,\dots,m_\ell;-m)$, 
if $\nu\in {\cal L\cal L}(m_1,\dots,m_\ell;-m)$
is minimal and is carried by $\tau$ and if $e$ is a large branch
of $\tau$, then there is a unique choice of a right or
left split of $\tau$ at $e$ such that the split track $\eta$ 
carries $\nu$. In particular, $\eta$ is fully recurrent. 
Note however that 
there may be a split of $\tau$ at $e$ such that
the split track is not fully recurrent any more
(see Section 2 of \cite{H09a} for details).
\begin{figure}[ht]
\begin{center}
%\psfrag{e}{$e$}
%\psfrag{b}{$c_3$}
%\psfrag{c}{$c_5$}
\psfrag{Figure A}{Figure A} 
\includegraphics[width=0.8\textwidth]{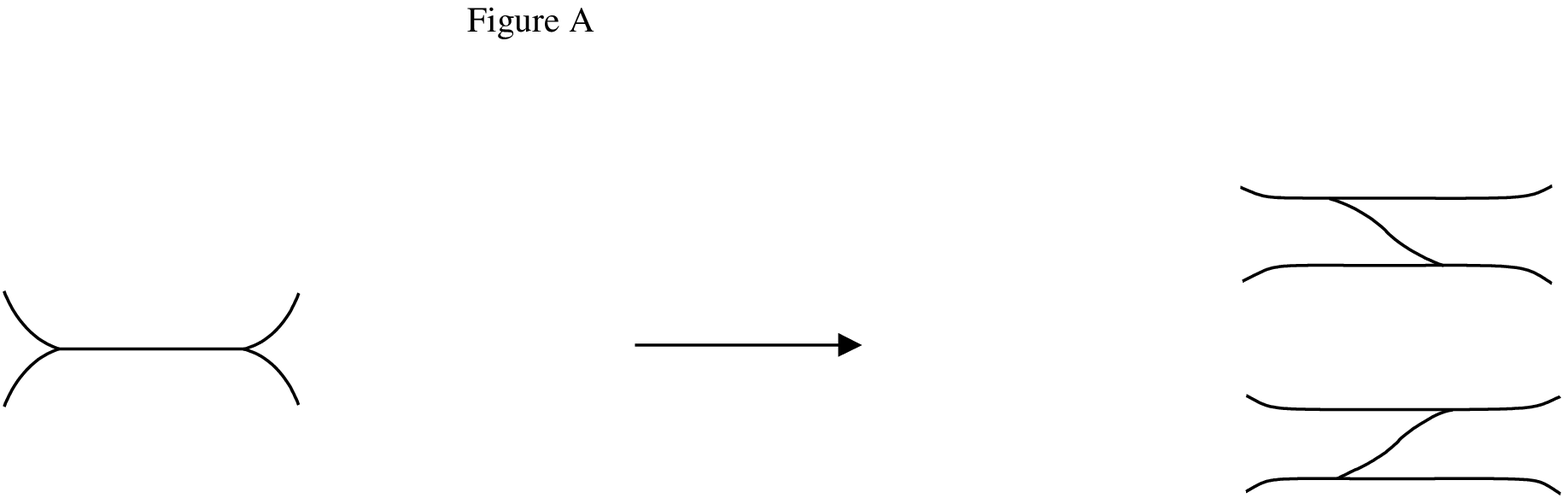}
\end{center}
\end{figure}

The following simple observation is used to identify
fully recurrent train tracks.

\begin{lemma}\label{nosplitor}
\begin{enumerate}
\item 
Let $e$ be a large branch of a fully recurrent non-orientable
train track $\tau$. Then no component of the train track $\sigma$ obtained
from $\tau$ by splitting $\tau$ at $e$ and removing the
diagonal of the split is orientable. 
\item Let $e$ be a large branch of a fully recurrent orientable
train track $\tau$. Then the train track $\sigma$ obtained
from $\tau$ by splitting $\tau$ at $e$ and removing the diagonal
of the split is connected.
\end{enumerate}
\end{lemma}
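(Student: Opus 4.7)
The plan is to first describe $\sigma$ combinatorially from Figure A, then combine recurrence with orientability for Part (2), and combine a direct orientation-transfer argument with full recurrence (for the subtler disconnected subcase) in Part (1).

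Reading Figure A, the split at the large branch $e$ together with the removal of the diagonal smooths out the two new switches and pairs the four small half-branches $\tilde a, \tilde b$ at $v_1$ and $\tilde c, \tilde d$ at $v_2$ into two smooth arcs $\alpha, \beta$ of $\sigma$, with the precise pairing dictated by the split direction.

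For Part (2), I will fix an orientation of $\tau$. At each trivalent switch, orientability forces the large half-branch and the two small half-branches to be oriented oppositely, so the switch condition for the positive transverse measure $\mu$ supplied by Lemma~\ref{fullyrec} becomes flow conservation on the resulting directed graph $\tau$. If $\sigma$ were disconnected, the split combinatorics produce a vertex partition of $\tau$ whose crossing edges lie in a consistently oriented subset of $\{e,\tilde a,\tilde b,\tilde c,\tilde d\}$; summing flow conservation over one side of the partition then forces $\mu(e)=0$, contradicting $\mu(e)>0$. Hence $\sigma$ is connected.

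For Part (1), I will argue by contradiction: assume $\sigma_0$ is an orientable component of $\sigma$ and fix an orientation on $\sigma_0$. A direct check from Figure A shows that the orientation of $\alpha$ (when $\alpha \subset \sigma_0$) induces orientations of the small half-branches $\tilde a, \tilde b$ that are both incoming at $v_1$ or both outgoing at $v_1$, precisely what is needed for orientability at the trivalent switch $v_1$ once $e$ is oriented compatibly; the analogous check works at $v_2$. When $\sigma$ is connected, this directly extends $\sigma_0$'s orientation to an orientation of $\tau$, contradicting non-orientability. When $\sigma$ is disconnected into $\sigma_0$ and a second component $\sigma_1$, one further has to orient $\sigma_1$: here I will invoke the minimal filling lamination $\nu$ carried by $\tau$ (from full recurrence), observe that $\nu$ has positive weight on every branch of $\tau'$ (including the diagonal, which bridges the two components of $\sigma$ in $\tau'$), and use these weights, together with the orientation of $\sigma_0$, to transport the orientation across the diagonal and force orientability of $\sigma_1$.

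The main technical obstacle will be this disconnected subcase of Part (1): the elementary orientation-transfer used in the connected case does not by itself propagate orientability from $\sigma_0$ to $\sigma_1$, and one must use the minimal filling lamination $\nu$ substantively to rule out the mixed possibility of one orientable and one non-orientable component.
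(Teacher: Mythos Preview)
Your proposal has genuine gaps in both parts, stemming from the same underlying issue: your arguments use only recurrence of $\tau$ (the existence of a positive transverse measure), whereas the lemma genuinely requires \emph{full} recurrence.

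\textbf{Part 2.} The flow-conservation argument cannot work as stated. With $\tau$ oriented and $e$ directed from $v_1$ to $v_2$, the half-branches $\tilde a,\tilde b$ are both incoming at $v_1$ and $\tilde c,\tilde d$ are both outgoing at $v_2$. If $\sigma$ splits into $\sigma_0\ni\alpha$ and $\sigma_1\ni\beta$, then for any vertex partition of $\tau$ the crossing branches among $\{e,a,b,c,d\}$ are \emph{never} consistently oriented: for instance, putting $v_1,v_2$ with the $\sigma_0$-switches gives crossing branches $b$ (into $v_1$) and $d$ (out of $v_2$), which point in opposite directions across the cut. In fact one can build an orientable \emph{recurrent} $\tau$ with $\sigma$ disconnected: directed cycles through $e$ are forced to use either $a,c$ (staying in the $\sigma_0$-side) or $b,d$ (staying in the $\sigma_1$-side), and a positive combination of one cycle of each type gives a positive transverse measure. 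So recurrence alone yields no contradiction; full recurrence is essential.

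\textbf{Part 1, connected case.} Your ``direct check from Figure~A'' is mistaken: the half-branch $\tilde a$ lies on $\alpha$ while $\tilde b$ lies on $\beta$, so the orientation of $\alpha$ alone determines only $\tilde a$, not $\tilde b$. Even when $\sigma$ is connected and oriented, nothing local forces the orientations of $\alpha$ and $\beta$ near $v_1$ to agree; an orientable connected $\sigma$ can perfectly well arise from a non-orientable $\tau$. So you cannot extend the orientation to $\tau$ without invoking something beyond the picture. Your disconnected subcase is correspondingly too vague: ``transporting orientation across the diagonal via the weights of $\nu$'' is not a mechanism that forces orientability of $\sigma_1$.

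\textbf{What the paper does.} The paper never attempts to orient $\tau$. Instead it argues about recurrence of the two split tracks $\tau_1,\tau_2$ (each a simple extension of $\sigma$ by the diagonal $b_i$). If $\sigma$, or a component of $\sigma$, is orientable, then any trainpath on $\tau_i$ that traverses $b_i$ enters that orientable piece moving in its orientation direction and can never return to $b_i$ from the large side; hence no closed trainpath hits $b_i$, so $\tau_i$ is not recurrent. But full recurrence of $\tau$ means the minimal large lamination $\nu$ is carried by one of $\tau_1,\tau_2$, which is therefore fully recurrent and in particular recurrent---a contradiction. The same reasoning handles Part~2, since a split of an orientable track is orientable and hence every component of a disconnected $\sigma$ would be orientable.
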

\begin{proof}
Let $\tau$ be a fully recurrent non-orientable train track
of topological type $(m_1,\dots,m_\ell;-m)$. 
Let $e$ be a large branch of $\tau$ and let 
$v$ be a switch on which
the branch $e$ is incident. Let $\sigma$ be the  
train track obtained from $\tau$ by splitting $\tau$ at $e$ and 
removing the diagonal branch of the split. Then the train tracks
$\tau_1,\tau_2$ obtained from $\tau$ by a right and left split at $e$,
respectively, are simple extensions of $\sigma$.  

If $\sigma$ is connected and orientable then the train tracks 
$\tau_1,\tau_2$ are not recurrent 
since no transverse
measure can give positive weight to the diagonal of the split
(compare the discussion in the proof of Lemma \ref{simpleex}).
However, since $\tau$
is fully recurrent, it can be split
at $e$ to a fully recurrent and hence recurrent train track.
This is a contradiction. 

Now assume that $\sigma$ is disconnected
and contains an orientable connected component $\sigma_1$.
Let $b_i\in \tau_i-\sigma$ be a diagonal of the
split connecting $\tau$ to $\tau_i$ $(i=1,2)$.  
If $\rho_i:[0,m]\to \tau_i$ is a trainpath with
$\rho_i[0,1]=b_i$ and $\rho_i[1,2]\in \sigma_1$ then 
$\rho_i[1,m]\subset
\sigma_1$ and hence once again, $\tau_i$ is not recurrent.
As above, this contradicts the assumption that $\tau$ is fully
recurrent . 
The first part of the corollary is proven. The second
part follows from the same argument since a split of an
orientable train track is orientable.
\end{proof}

\bigskip

{\bf Example:} 1) 
Figure B below shows a non-orientable recurrent train track 
$\tau$ of type $(4;0)$ on a 
closed surface of genus two.
The train track obtained from $\tau$ by a split
at the large branch $e$ and removal of the diagonal of 
the split track is 
orientable and hence $\tau$ is not fully recurrent. This corresponds
to the fact established by Masur and Smillie \cite{MS93} that
every quadratic differential with a single zero and no pole on 
a surface of genus $2$ is the square of a holomorphic one-form
(see Section 4 for more information).
\begin{figure}[ht]
\begin{center}
\psfrag{e}{$e$}
%\psfrag{b}{$c_3$}
%\psfrag{c}{$c_5$}
\psfrag{Figure B}{Figure B} 
\includegraphics[width=0.8\textwidth]{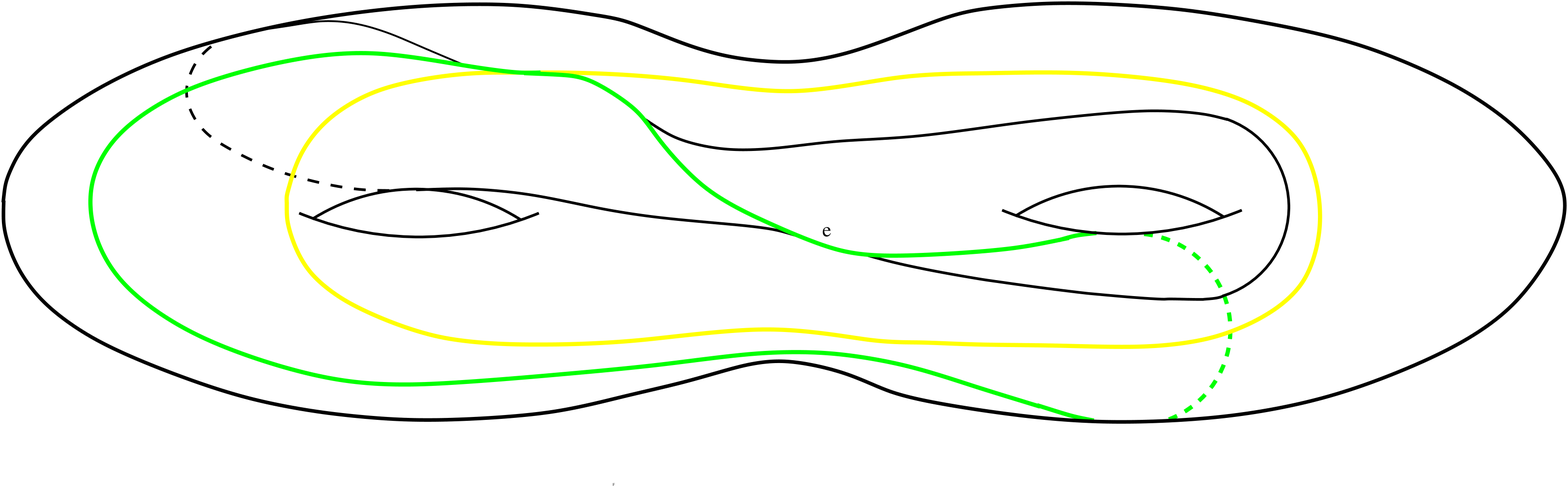}
\end{center}
\end{figure}

2) To construct an orientable recurrent train track
of type $(m_1,\dots,m_\ell;0)$ which is not fully recurrent
let $S_1$ be a surface of genus $g_1\geq 2$ and let 
$\tau_1$ be an orientable fully recurrent train track on $S_1$
with $\ell_1\geq 1$ complementary components.
Choose a complementary component $C_1$ of
$\tau_1$ in $S_1$, remove
from $C_1$ a disc $D_1$ and glue two copies of $S_1-D_1$ along
the boundary of $D_1$ to a surface $S$ of genus $2g_1$. 
The two copies of $\tau_1$
define a recurrent
disconnected oriented train track $\tau$ on $S$ which has an annulus
complementary component $C$.

Choose a branch $b_1$ of $\tau$ 
in the boundary of $C$. There is a corresponding
branch $b_2$ in the second boundary component of $C$. Glue a 
compact subarc of $b_1$ contained in the interior of $b_1$ to a compact
subarc of $b_2$ contained in the interior of $b_2$ so that the
images of the two arcs under the glueing form a large branch $e$ in 
the resulting train track $\eta$. The train track $\eta$ is recurrent
and orientable, and its complementary components are topological discs.
However, by Lemma \ref{nosplitor} it is not fully recurrent.

\bigskip

To each train track $\tau$ 
which fills up $S$ one can
associate a \emph{dual bigon track} $\tau^*$ 
(Section 3.4 of \cite{PH92}).
There is a bijection between
the complementary components of $\tau$ and those
complementary components of $\tau^*$ which are
not \emph{bigons}, i.e. discs with two cusps at the
boundary. This bijection maps
a component $C$ of $\tau$ which is an $n$-gon for some
$n\geq 3$ to an $n$-gon component of 
$\tau^*$ contained in $C$, and it maps a once punctured
monogon $C$ to a once punctured monogon contained in $C$.
If $\tau$ is orientable then the orientation of $S$ and
an orientation of $\tau$ induce an orientation on 
$\tau^*$, i.e. $\tau^*$ is orientable.

Measured geodesic laminations which are carried by $\tau^*$ 
can be described as follows.
A \emph{tangential measure} on a 
train track $\tau$ of type $(m_1,\dots,m_\ell;-m)$ 
assigns to a branch $b$ of $\tau$
a weight $\mu(b)\geq 0$ such that for every
complementary $k$-gon of $\tau$ with consecutive sides
$c_1,\dots,c_k$ and total mass $\mu(c_i)$ (counted
with multiplicities) the following holds true.
\begin{enumerate}
\item $\mu(c_i)\leq \mu(c_{i-1})+
\mu(c_{i+1})$.
\item $\sum_{i=j}^{k+j-1}(-1)^{i-j}\mu(c_i)\geq 0$, $j=1,\dots,k$.
\end{enumerate}
(The complementary once punctured monogons
define no constraint on tangential measures).
The space of
all tangential measures on $\tau$ has the structure
of a convex cone in a finite dimensional real vector space.
By the results from Section 3.4 of \cite{PH92},
every tangential measure on $\tau$ determines a simplex of
measured geodesic laminations which
\emph{hit $\tau$ efficiently}. The supports of these
measured geodesic laminations 
are carried by the bigon track $\tau^*$,
and every measured geodesic
lamination which is carried by $\tau^*$ can be obtained in this way.
The dimension of this simplex equals the number of
complementary components of $\tau$ with an even number of sides.
The train track $\tau$ is called \emph{transversely recurrent}
if it admits a tangential measure which is positive on
every branch.

In general, there are many tangential measures
which correspond to a fixed measured geodesic lamination
$\nu$ which hits $\tau$ efficiently. Namely, let
$s$ be a switch of $\tau$ and let $a,b,c$ be the 
half-branches of $\tau$ incident on $s$ and such that
the half-branch $a$ is large. If $\beta$ is a tangential
measure on $\tau$ which determines the measured
geodesic lamination $\nu$ then it may
be possible to drag the switch $s$ across some of 
the leaves of $\nu$ and modify the tangential
measure $\beta$ on $\tau$ to a tangential measure
$\mu\not=\beta$. Then $\beta-\mu$ is a multiple of 
a vector of the form $\delta_a-\delta_b-\delta_c$ where
$\delta_w$ denotes the function on the
branches of $\tau$ defined by $\delta_w(w)=1$ and
$\delta_w(a)=0$ for $a\not=w$.

\begin{definition}\label{large}
A train track $\tau$ of topological type 
$(m_1,\dots,m_\ell;-m)$ 
is called \emph{fully transversely recurrent}
if its dual bigon track 
$\tau^*$ carries a large minimal geodesic lamination
$\nu\in {\cal L\cal L}(m_1,\dots,m_\ell;-m)$. 
A train track $\tau$ of topological type $(m_1,\dots,m_\ell;-m)$ 
is called \emph{large} if
$\tau$ is fully recurrent and fully transversely recurrent.
A large train track of type $(1,\dots,1;-m)$ is called
\emph{complete}.
\end{definition}

For a large train track $\tau$ let 
${\cal V}^*(\tau)\subset {\cal M\cal L}$ 
be the set of all measured geodesic
laminations whose support is carried by $\tau^*$. Each of 
these measured geodesic
laminations corresponds to a tangential measure on 
$\tau$.  With this identification,
the pairing
\begin{equation}\label{intersectionpairing}
(\nu,\mu)\in {\cal V}(\tau)\times 
{\cal V}^*(\tau)\to \sum_b\nu(b)\mu(b)
\end{equation}
is just the restriction of the intersection form
on measured lamination space
(Section 3.4 of \cite{PH92}).
Moreover, 
${\cal V}^*(\tau)$ is naturally homeomorphic to 
a convex cone in a real vector space. The dimension of this cone
coincides with the dimension of ${\cal V}(\tau)$.

Denote by ${\cal L\cal T}(m_1,\dots,m_\ell;-m)$ the set of all
isotopy classes of large train tracks on $S$ of type
$(m_1,\dots,m_\ell;-m)$.

{\bf Remark:} In \cite{MM99}, Masur and Minsky
define a large train track to be a train track $\tau$ whose
complementary components are topological discs or once
punctured monogons, without the requirement that $\tau$ is
generic, transversely recurrent or recurrent. 
We hope that this inconsistency of terminology does not 
lead to any confusion.

\section{Strata}

As in Section 2, for a closed oriented surface 
$S$ of genus $g\geq 0$ with $m\geq 0$ punctures
let ${\cal Q}^1(S)$ be the
bundle of marked area one holomorphic
quadratic differentials with a simple pole at each puncture
over the Teichm\"uller space ${\cal T}(S)$ 
of marked complex structures
on $S$.
For a complete hyperbolic metric on $S$ of
finite area, an area one quadratic differential 
$q\in {\cal Q}^1(S)$ is 
determined by a pair $(\lambda^+,\lambda^-)$ of 
measured geodesic laminations 
which jointly fill up $S$ and such that
$\iota(\lambda^+,\lambda^-)=1$. 
The \emph{vertical} measured geodesic 
lamination $\lambda^+$ for $q$
corresponds to the equivalence class of the vertical measured
foliation of $q$. 
The \emph{horizontal} measured geodesic lamination
$\lambda^-$ for $q$ corresponds to the equivalence
class of the horizontal measured foliation of $q$.

A tuple $(m_1,\dots,m_\ell)$ of positive integers 
$1\leq m_1\leq \dots \leq m_\ell$ with $\sum_im_i=4g-4+m$
defines a \emph{stratum} ${\cal Q}^1(m_1,\dots,m_\ell;-m)$ 
in ${\cal Q}^1(S)$. This stratum consists 
of all marked area one
quadratic differentials with $m$ simple poles and
$\ell$ zeros of 
order $m_1,\dots,m_\ell$ which are not squares of holomorphic
one-forms. 
The stratum is a real hypersurface
in a complex manifold of dimension 
\begin{equation}\label{h}
h=2g-2+m+\ell.\end{equation}
The closure in ${\cal Q}^1(S)$ of 
a stratum is a union of components of strata. 
Strata are invariant
under the action of the mapping class group
${\rm Mod}(S)$ of $S$ 
and hence
they project to strata in the moduli space
${\cal Q}(S)={\cal Q}^1(S)/{\rm Mod}(S)$ 
of quadratic differentials on $S$ with a simple
pole at each puncture. We denote the
projection of the stratum ${\cal Q}^1(m_1,\dots,m_\ell;-m)$ by
${\cal Q}(m_1,\dots,m_\ell;-m)$. The strata in moduli
space need not be connected, but their connected
components have been identified by 
Lanneau \cite{L08}. A stratum in ${\cal Q}(S)$ has
at most two connected components.

Similarly, if $m=0$ then we 
let ${\cal H}^1(S)$ be the bundle of marked area one
holomorphic one-forms over Teichm\"uller space
${\cal T}(S)$ of $S$. 
For a tuple $k_1\leq \dots \leq k_\ell$
of positive integers with $\sum_ik_i=2g-2$, the stratum 
${\cal H}^1(k_1,\dots,k_\ell)$ of marked area one holomorphic one-forms
on $S$ with $\ell$ zeros of order $k_i$ $(i=1,\dots,\ell)$
is a real hypersurface in a complex manifold of dimension
\begin{equation}\label{h2} h=2g-1+\ell.\end{equation} 
It projects to a stratum 
${\cal H}(k_1,\dots,k_\ell)$ in the moduli space
${\cal H}(S)$ of area one holomorphic one-forms on $S$. 
Strata of holomorphic one-forms in moduli space
need not be connected, 
but the number
of connected components of a stratum is at most three
\cite{KZ03}.

Recall from Section 2 the definition
of the strong stable, the stable, the unstable and the strong unstable foliation
$W^{ss},W^s,W^u,W^{su}$ of ${\cal Q}^1(S)$.
Let $\tilde{\cal Q}$ be a component of
a stratum ${\cal Q}^1(m_1,\dots,m_\ell;-m)$ of marked quadratic differentials
or of a stratum
${\cal H}^1(m_1/2,\dots,m_\ell/2)$ of marked abelian differentials. 
Using period coordinates, one sees that 
every $q\in \tilde {\cal Q}$ has a 
connected neighborhood
$U$ in $\tilde {\cal Q}$ 
with the following properties \cite{V90}. For $u\in U$ let 
$[u^v]$ (or $[u^h]$) be the vertical (or the horizontal)
projective measured geodesic lamination of $u$. 
Then $\{[u^v]\mid u\in U\}$ is homeomorphic to an open ball
in $\mathbb{R}^{h-1}$ (where $h>0$ is as in 
equation (\ref{h},\ref{h2})). Moreover, for $q\in U$ the set
\[\{u\in U\mid [u^v]=[q^v]\}=W^s_{\tilde{\cal Q},{\rm loc}}(q)\subset W^s(q)\] 
is a smooth connected local submanifold of $U$ 
of (real) dimension $h$ which
is called the \emph{local stable manifold} of $q$ in $\tilde {\cal Q}$ 
(see \cite{V90}).
Similarly we define the \emph{local unstable manifold}
$W^u_{\tilde {\cal Q},{\rm loc}}(q)$ of $q$ in $\tilde {\cal Q}$. 
If two such local stable
(or unstable) manifolds intersect then their union is again 
a local stable (or unstable) 
manifold.
The maximal connected set containing
$q$ which is a union of
 intersecting local stable (or unstable) manifolds
is the \emph{stable manifold} $W^s_{\tilde{\cal Q}} (q)$
(or the \emph{unstable manifold} $W^u_{\tilde {\cal Q}}(q)$) of $q$
in $\tilde {\cal Q}$. Note that $W^i_{\tilde {\cal Q}}(q)\subset
W^i(q)$ $(i=s,u)$.
A stable (or unstable) manifold is invariant under the action of the
Teichm\"uller flow $\Phi^t$.

{\bf Remark:} There may be a component $\tilde {\cal Q}$
of a stratum and some $\tilde q\in \tilde {\cal Q}$ such that
$W^s(\tilde q)\cap \tilde {\cal Q}$ has infinitely many components.

The (strong) stable and (strong) unstable manifolds define
smooth
foliations $W_{\tilde {\cal Q}}^s,W_{\tilde {\cal Q}}^u$ 
of $\tilde{\cal Q}$ which 
are called the \emph{stable} and \emph{unstable}
foliations of $\tilde {\cal Q}$, respectively. 
Define the \emph{strong stable} foliation 
$W^{ss}_{\tilde {\cal Q}}$
(or the \emph{strong unstable} foliation 
$W^{su}_{\tilde {\cal Q}}$)  of $\tilde {\cal Q}$ 
by requiring that locally the leaf $W^{ss}_{\tilde {\cal Q}}(q)$ 
(or $W^{su}_{\tilde {\cal Q}}(q)$)  
through $q$ is the subset of $W^s_{\tilde {\cal Q}}(q)$ 
(or of $W^u_{\tilde {\cal Q}}(q)$) of all marked
quadratic differentials 
whose vertical (or horizontal) measured geodesic lamination
equals the vertical (or horizontal) measured geodesic lamination of $q$.
The strong stable foliation of $\tilde {\cal Q}$ is transverse to 
the unstable foliation of $\tilde {\cal Q}$.

The foliations $W^i_{\tilde {\cal Q}}$ $(i=ss,s,su,u)$ are invariant under 
the action of the stabilizer
${\rm Stab}(\tilde {\cal Q})$ of $\tilde {\cal Q}$ in 
${\rm Mod}(S)$, and they  
project to $\Phi^t$-invariant singular foliations $W^i_{\cal Q}$ of 
${\cal Q}=\tilde {\cal Q}/{\rm Stab}(\tilde {\cal Q})$.

\subsection{Orbifold coordinates}

In this 
technical subsection we 
describe for every component
${\cal Q}$ of a stratum in the moduli space
of quadratic differentials  and for every point
$q\in {\cal Q}$ 
a basis of neighborhoods of $q$ in ${\cal Q}$ 
with local product structures. The material is well known
to the experts but a bit difficult to find in the literature.
In the course of the discussion we introduce some
notations which will be used throughout.

For $\tilde q\in {\cal Q}^1(S)$ 
and $z\in W^{s}(\tilde q)$ there is a neighborhood
$V$ of $\tilde q$ in $W^{su}(\tilde q)$ 
and there is a homeomorphism 
\begin{equation}\label{zeta}
\zeta_z:V\to \zeta_z(V)\subset W^{su}(z)\end{equation}
with $\zeta_z(\tilde q)=z$ which is
determined by the requirement
that $\zeta_z(u)\in W^s(u)$. We call $\zeta_z$ a 
\emph{holonomy map} for the strong unstable foliation along the
stable foliation.

Similarly, for $\tilde q\in {\cal Q}^1(S)$ and 
$z\in W^{u}(\tilde q)$ there is a neighborhood $Y$ of $\tilde q$
in $W^{ss}(\tilde q)$ and there is a homeomorphism
\begin{equation}\label{theta}
\theta_z:Y\to \theta_z(Y)\subset W^{ss}(z)\end{equation}
with $\theta_z(\tilde q)=z$ which is
determined by the requirement
that $\theta_z(u)\in W^u(u)$. We call $\theta_z$ a 
\emph{holonomy map} for the strong stable foliation along the
unstable foliation. 
The holonomy maps 
are equivariant under the action of the mapping class group
and hence they project to locally defined
holonomy maps in ${\cal Q}(S)$ which are denoted by the same symbols.

Recall from Section 2 the definition of the intrinsic 
path-metrics $d^i$ on the leaves of the foliation $W^i$
$(i=s,u)$. These path metrics
are invariant under the action of the mapping class group
and hence they project to path metrics on the leaves of
$W^i$ in ${\cal Q}(S)$ which we denote by the same symbols.
For $q\in {\cal Q}(S),z\in W^i(q)$ and any
preimage $\tilde q$ of $q$ in ${\cal Q}^1(S)$, the distance
$d^i(q,z)$ is the shortest length of a path in 
$W^i(\tilde q)$ connecting $\tilde q$ to a preimage of $z$.
Let moreover $d^{ss},d^{su}$ be the 
restrictions of $d^s,d^u$ to distances
on the leaves of the strong stable and strong unstable foliation
of ${\cal Q}^1(S)$ and ${\cal Q}(S)$.

Let  \[\Pi:{\cal Q}^1(S)\to {\cal Q}(S)\]
be the canonical projection.
For $q\in {\cal Q}(S)$ and $r>0$ let 
\[B^i(q,r)\] 
be the closed ball of radius $r$ about $q$ in 
$W^i(q)$ $(i=ss,su,s,u)$ with respect to the metric $d^i$. 
Call such a ball $B^{i}(q,r)$ a \emph{metric orbifold ball} centered at $q$
if there is a lift $\tilde q\in {\cal Q}^1(S)$ of $q$
with the following properties.
\begin{enumerate}
\item The ball $B^i(\tilde q,r)\subset
(W^{i}(\tilde q),d^i)$ about $\tilde q$ of the same radius is contractible and
precisely invariant under the stabilizer 
${\rm Stab}(\tilde q)$ of 
$\tilde q$ in ${\rm Mod}(S)$.
\item $B^i(q,r)= B^i(\tilde q,r)/{\rm Stab}(\tilde q)$ which
means that the restriction 
of the map $\Pi$ to $B^i(\tilde q,r)$ factors through 
a homeomorphism $B^i(\tilde q,r)/{\rm Stab}(\tilde q)\to  
B^i(q,r)$.
\end{enumerate}
We also say that $B^i(q,r)$ is an \emph{orbifold quotient}
of $B^i(\tilde q,r)$. Note that every
metric orbifold ball $B^i(q,r)\subset W^i(q)$ is contractible.
There is also an obvious notion of an orbifold ball which
is not necessarily metric.

For every point $q\in {\cal Q}(S)$ there is a number
\[a(q)>0\] 
such that the balls $B^i(q,a(q))$ 
are metric orbifold balls $(i=ss,su)$ 
and that for any preimage $\tilde q$ of $q$ in ${\cal Q}^1(S)$ 
and any $z\in B^{ss}(\tilde q,a(q))$ 
(or $z\in B^{su}(\tilde q,a(q))$)
the holonomy map $\zeta_z$ (or $\theta_z$)
is defined on $B^{su}(\tilde q,a(q))$
(or on $B^{ss}(\tilde q,a(q))$).

Now let \[W_1\subset B^{ss}(q,a(q)),
W_2\subset B^{su}(q,a(q))\]
be Borel sets
and let $\tilde W_1\subset B^{ss}(\tilde q,a(q))$,
$\tilde W_2\subset B^{su}(\tilde q,a(q))$ 
be the preimages of $W_1,W_2$ 
in $B^{ss}(\tilde q,a(q)),B^{su}(\tilde q,a(q))$.
Then $\tilde W_1,\tilde W_2$ are precisely invariant under
${\rm Stab}(\tilde q)$.
Define 
\begin{equation}V(\tilde W_1,\tilde W_2) =
\cup_{z\in \tilde W_1}\zeta_z\tilde W_2
\text{ and }
V(W_1,W_2)  =\Pi V(\tilde W_1,\tilde W_2).\notag
\end{equation} 
Note that the map $\xi:\tilde W_1\times \tilde W_2\to 
V(\tilde W_1,\tilde W_2)$ defined by 
$\xi(z,u)=\zeta_z(u)$ is a homeomorphism, and 
$V(W_1,W_2)$ is 
homeomorphic to the quotient of $\tilde W_1\times \tilde W_2$
under the diagonal action of ${\rm Stab}(\tilde q)$.
In particular, if $W_1,W_2$ are
connected then $V(W_1,W_2)$ is connected as well.
Similarly, define 
\[Y(\tilde W_1,\tilde W_2)=\cup_{z\in \tilde W_2}
\theta_z \tilde W_1\text{ and }Y(W_1,W_2)=\Pi Y(\tilde W_1,\tilde W_2).\]
Then there is a continuous function 
\begin{equation}\label{sigma}
\sigma:V(B^{ss}(q,a(q)),B^{su}(q,a(q)))\to \mathbb{R}\end{equation}
which vanishes on $B^{ss}(q,a(q))\cup B^{su}(q,a(q))$ and such that
\[Y(W_1,W_2)=\{\Phi^{\sigma(z)}z\mid z\in V(W_1,W_2)\}.\]
In particular, for every number $\kappa>0$ there is a number
$r(\kappa)>0$ such that the restriction
of the function $\sigma$ 
to $V(B^{ss}(\tilde q,r(\kappa)),B^{su}(\tilde q,r(\kappa)))$
assumes values in $[-\kappa,\kappa]$.

For $t_0>0$ define
\begin{align}\label{localproduct}
V(\tilde W_1,\tilde W_2,t_0)= &\cup_{-t_0\leq s\leq t_0}
\Phi^sV(\tilde W_1,\tilde W_2)\\
\text{ and }V(W_1,W_2,t_0)=& 
\Pi V(\tilde W_1,\tilde W_2,t_0).
\notag
\end{align}
Then for sufficiently small $t_0$, say for all $t_0\leq t(q)$, 
the following properties are satisfied.

\begin{enumerate}
\item[a)] 
$V(W_1,W_2,t_0)$ 
is homeomorphic to 
$(\tilde W_1\times \tilde W_2)/{\rm Stab}(\tilde q)\times [-t_0,t_0]$.
\item[b)]
Every connected component of the intersection of 
an orbit of $\Phi^t$ with $V(W_1,W_2,t_0)$ is an arc of length
$2t_0$. 
\end{enumerate}

We call a set $V(W_1,W_2,t_0)$ as in (\ref{localproduct})
which satisfies the assumptions a),b)
a set with a \emph{local product structure}. Note that
every point $q\in {\cal Q}(S)$ has a neighborhood
in ${\cal Q}(S)$ with a local product structure, e.g. 
the set $V(B^{ss}(q,r),B^{su}(q,r),t)$ for
$r\in(0,a(q))$ and $t\in (0,t(q))$. Moreover, the
neighborhoods of $q$ with a local product structure form a 
basis of neighborhoods.

The above discussion can be applied to strata 
as follows.   

A connected component ${\cal Q}$ of 
a stratum ${\cal Q}(m_1,\dots,m_\ell;-m)$ 
or of a stratum
${\cal H}(m_1/2,\dots,m_\ell/2)$ is locally
closed in ${\cal Q}(S)$ (here we identify an abelian differential
with its square).
This means that for every $q\in {\cal Q}$ 
there exists an open neighborhood $V$ of $q$ in ${\cal Q}(S)$ such that
$V\cap {\cal Q}$ is a closed subset of $V$.

%Moreover, if $\hat q\in 
%\hat{\cal Q}-\tilde {\cal Q}$ then for every
%neighborhood $U$ of $\hat q$ in $\hat{\cal Q}$ 
%there is a quadratic
%differential $u\in U$ whose horizontal
%measured geodesic lamination coincides with the
%horizontal measured geodesic lamination of $\hat q$, and
%there is a quadratic differential $z\in U$ whose
%vertical measured geodesic lamination coincides with
%the vertical measured geodesic lamination of $\hat q$.  

Using period coordinates \cite{V90}, one obtains that 
for every point $q\in {\cal Q}$ 
there is a number $a_{\cal Q}(q)\leq a(q)$ 
and a number $t_{\cal Q}(q)\leq t(q)$ 
with the following property.
For $r\leq a_{\cal Q}(q)$ let 
\[B^{ss}_{\cal Q}(q,r),B^{su}_{\cal Q}(q,r)\] be the component 
containing $q$ of the intersection 
$B^{ss}(q,r)\cap {\cal Q},B^{su}(q,r)\cap {\cal Q}$
(note that the intersection
$B^{ss}(q,r)\cap {\cal Q}$ may not be closed and may have infinitely
many components).
Then $V(B^{ss}_{\cal Q}(q,r),B^{su}_{\cal Q}(q,r),t_{\cal Q}(q))$ is 
a neighborhood of $q$ in ${\cal Q}$ \cite{V90}.
We say that  this neighborhood has a \emph{local product structure}.

We say that a Borel
set $Y\subset {\cal Q}$ 
has a \emph{local product structure} if 
there is some $q\in Y$ and if there are Borel sets
\[W_1\subset B^{ss}_{\cal Q}(q,a(q)),
W_2\subset B^{su}_{\cal Q}(q,a(q))\]
and a number $t_0<t(q)$
such that $Y=V(W_1,W_2,t_0)$.

The $\Phi^t$-invariant Borel probability
measure $\lambda$ on ${\cal Q}$ 
in the Lebesgue measure class admits a natural family
of conditional measures $\lambda^{ss},\lambda^{su}$ on 
strong stable and strong unstable manifolds. 
The conditional
measures $\lambda^i$ are well defined up to a universal constant,
and they 
transform under the 
Teichm\"uller geodesic flow $\Phi^t$ via
\[d\lambda^{ss}\circ \Phi^t=e^{-ht}d\lambda^{ss}\text{ and }
d\lambda^{su}\circ \Phi^t=e^{ht}d\lambda^{su}.\]
Let ${\cal F}:{\cal Q}(S)\to {\cal Q}(S)$ be the 
flip $q\to {\cal F}(q)=-q$ and let 
$dt$ be the Lebesgue measure on the flow
lines of the Teichm\"uller flow.
The conditional measures
$\lambda^{ss},\lambda^{su}$ are uniquely determined
by the additional requirements that
${\cal F}_*\lambda^{su}=\lambda^{ss}$
and that 
with respect to a local product structure, $\lambda$ can be
written in the form 
\[d\lambda=d\lambda^{ss}\times 
d\lambda^{su}\times dt.\] 
 The measures $\lambda^u$ on unstable manifolds
defined by 
$d\lambda^u=d\lambda^{su}\times dt$ are invariant 
under holonomy along strong 
stable manifolds.

To summarize, we obtain the following.
The natural homeomorphism
\begin{align}
\Psi: B^{ss}_{\cal Q}(q,a_{\cal Q}(q))
\times B^{su}_{\cal Q}(q,a_{\cal Q}(q))\times 
[-t_{\cal Q}(q),t_{\cal Q}(q)]\notag\\
\to V(B^{ss}_{\cal Q}(q,a_{\cal Q}(q)),
B^{su}_{\cal Q}(q,a_{\cal Q}(q)),t_{\cal Q}(q))=V\notag
\end{align} 
maps the measure $\lambda_0$
on $V$ defined by
$d\lambda_0=\Psi_*d\lambda^{ss}\times d\lambda^{su}\times dt$
to a measure of the form $e^\phi\lambda$ 
where $\phi$ is a
continuous function on $V$ which vanishes on 
$\cup_{t\in [-t_{\cal Q}(q),t_{\cal Q}(q)]}
\Phi^t B^{ss}_{\cal Q}(q,a_{\cal Q}(q))$
(see \cite{V86}).

\subsection{Train track coordinates}

The goal of this subsection is to relate components of strata
in ${\cal Q}(S)$ to large train tracks.
This will be used to define product coordinates
near points in the boundary 
of a stratum. Note that the natural product coordinates
on strata are period coordinates. For a point $q$ 
in the boundary 
of a stratum, some of the relative periods vanish and there is
no canonical  choice of a relative cycle near $q$ which can 
be used for period coordinates in a neighborhood of $q$.

We chose to construct product coordinates near boundary points of  
a stratum using train tracks even
though similar coordinates can be obtained using the usual period coordinate
construction.  These
train track coordinates will be used in other contexts as well.

We continue to use the assumptions and notations from Section 2 and Section 3.
For a large train track $\tau\in {\cal L\cal T}(m_1,\dots,m_\ell;-m)$ 
let 
\[{\cal V}_0(\tau)\subset {\cal V}(\tau)\] be the set of all measured
geodesic laminations $\nu\in {\cal M\cal L}$ 
whose support is carried by $\tau$
and such that the total weight of the transverse measure
on $\tau$ defined by $\nu$ equals one. 
Let 
\[{\cal Q}(\tau)\subset {\cal Q}^1(S)\] be 
the set of all area one marked quadratic differentials 
whose vertical measured geodesic lamination 
is contained in ${\cal V}_0(\tau)$ 
and whose horizontal 
measured geodesic lamination is carried by the dual
bigon track $\tau^*$ of $\tau$.
By definition of a large train track, 
we have ${\cal Q}(\tau)\not=\emptyset$.
The next proposition relates 
${\cal Q}(\tau)$ to components of strata. 

\begin{proposition}\label{structure}
\begin{enumerate}
\item 
For every large non-orientable train track\\ 
$\tau\in {\cal L\cal T}(m_1,\dots,m_\ell;-m)$ 
there is a component $\tilde{\cal Q}$ 
of the stratum\\ 
${\cal Q}^1(m_1,\dots,m_\ell;-m)$ such that
for every $\delta >0$ the set 
$\{\Phi^tq\mid q\in {\cal Q}(\tau),
t\in [-\delta,\delta]\}$ is the closure 
in ${\cal Q}^1(S)$  of 
an open subset of $\tilde{\cal Q}$.
\item For every large orientable train track 
$\tau\in {\cal L\cal T}(m_1,\dots,m_\ell;0)$ 
there is a component $\tilde{\cal Q}$ of the stratum
${\cal H}^1(m_1/2,\dots,m_\ell/2)$
such that 
for every $\delta >0$ the set 
$\{\Phi^tq\mid q\in {\cal Q}(\tau),
t\in [-\delta,\delta]\}$ is the closure 
in ${\cal H}^1(S)$ of 
an open subset of $\tilde{\cal Q}$.
\end{enumerate}
\end{proposition}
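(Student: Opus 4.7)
The plan is to realize the set $\{\Phi^tq \mid q\in {\cal Q}(\tau),\, t\in[-\delta,\delta]\}$ as a full-dimensional, connected chart inside a single component of the appropriate stratum, built from the combinatorial data of $\tau$ and its dual bigon track $\tau^*$. First I would verify that every $q\in{\cal Q}(\tau)$ lies in the stratum ${\cal Q}^1(m_1,\dots,m_\ell;-m)$, or in ${\cal H}^1(m_1/2,\dots,m_\ell/2)$ if $\tau$ is orientable. The vertical and horizontal measured geodesic laminations $\lambda^+,\lambda^-$ determine $q$, and the zeros and simple poles of $q$ correspond bijectively to the complementary polygons and once-punctured monogons of $|\lambda^+|\cup|\lambda^-|$. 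For $q\in {\cal Q}(\tau)$ with $\lambda^+$ giving strictly positive weight to every branch of $\tau$ and $\lambda^-$ giving strictly positive weight to every branch of $\tau^*$, the bijection between complementary components of $\tau$ and of $\tau^*$ (with the $n$-gon of $\tau^*$ sitting inside the $n$-gon of $\tau$, cf.\ Section 3.4 of \cite{PH92}) forces the complementary structure of $|\lambda^+|\cup|\lambda^-|$ to match the topological type of $\tau$ exactly. In the orientable case, orientability of $\tau$ and of $\tau^*$ (together with Lemma \ref{nosplitor} guaranteeing the correct parity of polygon sides) ensures that $\lambda^+$ and $\lambda^-$ are both orientable and jointly define an abelian differential with zero orders $m_i/2$ whose square is $q$.

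Next I would establish connectedness and match dimensions. Using the identification of the intersection form with the pairing of equation \eqref{intersectionpairing}, the set ${\cal Q}(\tau)$ is homeomorphic to
\[
\{(\nu,\mu)\in {\cal V}_0(\tau)\times {\cal V}^*(\tau) : \iota(\nu,\mu)=1\}.
\]
Since ${\cal V}_0(\tau)$ and ${\cal V}^*(\tau)$ are convex polytopes and the pairing is a positive bilinear form (nondegenerate because $\tau$ is large), this set is connected; hence the image $\Pi {\cal Q}(\tau)$ lies in a single connected component $\tilde{\cal Q}$ of the relevant stratum. By Corollary \ref{dimensioncount}, $\dim {\cal V}(\tau)=\dim {\cal V}^*(\tau)=h$ where $h$ is the integer of \eqref{h} (non-orientable case) or \eqref{h2} (orientable case). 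The area-one and intersection-one constraints each cut one real dimension, giving $\dim {\cal Q}(\tau)=2h-2$, and the flow-out over $[-\delta,\delta]$ has real dimension $2h-1$, matching the real dimension of $\tilde{\cal Q}$ as a real hypersurface in a complex $h$-dimensional manifold.

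Finally I would invoke the period coordinates of Veech \cite{V90} to upgrade this dimension match to the stated conclusion. Near a generic $q_0\in{\cal Q}(\tau)$ with strictly positive branch weights on both $\tau$ and $\tau^*$, period coordinates identify a neighborhood of $q_0$ in $\tilde{\cal Q}$ with an open subset of the real hypersurface cut out by $\iota(\nu,\mu)=1$ inside ${\cal V}(\tau)\times {\cal V}^*(\tau)$, crossed with the flow parameter. The subset of ${\cal Q}(\tau)$ consisting of such strictly positive $q_0$ is therefore open in $\tilde{\cal Q}$, and its flow-out over $(-\delta,\delta)$ is the desired open subset; its closure recovers the whole set because ${\cal V}_0(\tau)$ and ${\cal V}^*(\tau)$ are closed polytopes whose interiors are characterized by strict positivity of weights. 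The main obstacle will be the first step: verifying rigorously that the complementary-polygon combinatorics of $|\lambda^+|\cup|\lambda^-|$ really match those of $\tau$ rather than producing additional polygons as a degeneration, which relies on strict positivity of the weights together with the \emph{largeness} of $\tau$ (not merely its filling property), and on carefully tracking how the leaves of $\tau^*$ sit inside the complementary components of $\tau$.
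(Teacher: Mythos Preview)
Your overall architecture (dimension count via Corollary \ref{dimensioncount}, connectedness of the parameter space, invariance of domain) matches the paper's, but the crucial first step has a genuine gap.

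The assertion that the zeros of $q$ correspond to ``complementary polygons of $|\lambda^+|\cup|\lambda^-|$'' is not the right framing, and the claim that strict positivity of the weights on $\tau$ and $\tau^*$ forces the singularity pattern to match the topological type of $\tau$ is not justified. The zero structure of $q$ is read off from the singularities of the vertical (equivalently horizontal) \emph{foliation}; passing to the geodesic lamination $|\lambda^+|$ collapses each cluster of zeros joined by vertical saddle connections into a single complementary polygon. Thus what you actually have to exclude is the existence of vertical saddle connections, and positivity of branch weights on $\tau$ alone does not do this: an interior point of ${\cal V}_0(\tau)$ can have support strictly smaller (in the sense of complementary combinatorics) than $\tau$. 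Your reference to Lemma \ref{nosplitor} in the orientable case is also misplaced; that lemma concerns connectedness after splitting, not parity of polygon sides.

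The paper sidesteps this difficulty. It does \emph{not} try to place an arbitrary interior point of ${\cal Q}(\tau)$ in the stratum directly. Instead it chooses $\mu\in{\cal V}_0(\tau)$ whose support is a \emph{minimal large} lamination of type $(m_1,\dots,m_\ell;-m)$ (such $\mu$ exist precisely because $\tau$ is large), and then rules out vertical saddle connections by a universal-cover argument: a vertical saddle connection would force horizontal leaves to join two \emph{non-adjacent} sides of a complementary polygon of $\mu$, whereas any trainpath on $\tau^*$ can only join \emph{adjacent} sides of a complementary polygon of $\tau$. This is the geometric core you are missing. Once these special $q$ are placed in ${\cal Q}^1(m_1,\dots,m_\ell;-m)$, the paper uses your dimension count and invariance of domain to get open pieces of strong stable and strong unstable manifolds, and then invokes density of such $\mu$ (and of the analogous $\nu$) inside ${\cal V}(\tau)\times{\cal V}^*(\tau)$ to conclude that all of ${\cal Q}(\tau)$ lies in the closure of a single component $\tilde{\cal Q}$.
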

\begin{proof} By \cite{L83}, the support $\xi$ of the 
vertical measured geodesic lamination of 
a marked quadratic differential $z\in {\cal Q}^1(S)$
can be obtained from the vertical foliation of $z$ by cutting $S$ open
along each vertical separatrix and straightening the remaining 
leaves with respect to the hyperbolic structure $Pz\in {\cal T}(S)$.
In particular, up to homotopy a vertical saddle connection $s$ of $z$  is
contained in the interior of a complementary component $C$ of $\xi$
which is uniquely determined by $s$.

 Let $\tau\in {\cal L\cal T}(m_1,\dots,m_\ell;-m)$. Assume first that 
$\tau$ is non-orientable. Let
$\mu\in {\cal V}_0(\tau)$ be such that 
the support of $\mu$ is contained in 
${\cal L\cal L}(m_1,\dots,m_\ell;-m)$
and let  $\nu\in {\cal V}^*(\tau)$. 
Then $\mu$ is non-orientable
since otherwise $\tau$ inherits an orientation from $\mu$.
The measured geodesic laminations
$\mu,\nu$ jointly fill up $S$ (since the support of 
$\nu$ is different from the support of $\mu$ and the
support of $\mu$ fills up $S$) and hence 
if $\nu$ is normalized in such a way that
$\iota(\mu,\nu)=1$ then the pair $(\mu,\nu)$ defines
a point $q\in {\cal Q}(\tau)$. Our first goal is to show 
that $q\in {\cal Q}^1(m_1,\dots,m_\ell;-m)$.

The support of the geodesic lamination $\mu$ is contained
in ${\cal L\cal L}(m_1,\dots,m_\ell;-m)$ and therefore
the orders of the zeros of the quadratic differential $q$ are obtained from
the orders $m_1,\dots,m_\ell$ by subdivision.
There
is a non-trivial subdivision, say of the form $m_i=\sum_sk_s$, if and only if
there is at least one vertical saddle connection for $q$.

Assume to the contrary that there is a vertical saddle connection $s$ for $q$.
Let $\tilde q$ be the lift of $q$ to 
a quadratic differential on the universal covering 
${\bf H}^2$ of $S$ and let $\tilde s\subset {\bf H}^2$ be 
a preimage of $s$.
Let $\tilde\mu\subset {\bf H}^2$
be the preimage of $\mu$. As discussed in the first
paragraph of this proof,
the saddle connection $\tilde s$ is contained in a complementary
component $\tilde C$ of the support of $\tilde \mu$. 
This component is an ideal polygon with
finitely many sides. 

A biinfinite geodesic line for the singular
euclidean metric defined by $\tilde q$ is a quasi-geodesic
in the hyperbolic plane ${\bf H}^2$
and hence it has well defined endpoints in the ideal boundary $\partial {\bf H}^2$ of
${\bf H}^2$. There are two vertical geodesic lines
$\alpha_0,\beta_0$ for $\tilde q$ which contain
the saddle connection $\tilde s$ as a subarc 
and which are contained in a bounded neighborhood
of a side $\alpha,\beta$ of $\tilde C$. For a fixed orientation of $\tilde s$, the 
geodesics $\alpha_0,\beta_0$ 
 are determined by the requirement that their orientation coincides
with the given orientation of $\tilde s$ and that moreover at every singular point $x$,
the angle at $x$ to the left of $\alpha_0$ (or to the right of $\beta_0$) 
for the orientation of the geodesic and the orientation
of ${\bf H}^2$ equals $\pi$. 

The ideal boundary of the closed half-plane of ${\bf H}^2$ which 
is bounded by 
$\alpha$ (or $\beta$) and which is disjoint from the
interior of $\tilde C$ is a compact
subarc $a$ (or $b$) of $\partial {\bf H}^2$. The arcs $a,b$ are disjoint (or, equivalently,
the sides $\alpha,\beta$ of $\tilde C$ are not adjacent).
A horizontal geodesic line for $\tilde q$ which intersects the interior
of the saddle connection $\tilde s$ is a quasi-geodesic in ${\bf H}^2$ 
with one endpoint in the interior of the arc $a$ and the second
endpoint  in the interior of the arc $b$.

Now a carrying map $F:S\to S$ for $\mu$ with 
$F(\mu)\subset \tau$ maps the support of $\mu$ onto $\tau$ 
and hence it induces a bijection
between the complementary
components of the support of $\mu$ and the complementary
components of $\tau$. 
In particular, the projections of the geodesics $\alpha,\beta$ to $S$ 
determine two opposite sides 
of the complementary component $C_\tau$  of $\tau$ corresponding to the 
projection of $\tilde C$ to $S$.

On the other hand, by construction of the dual bigon 
track $\tau^*$ of $\tau$ (see \cite{PH92}, if $\rho:(-\infty,\infty)\to \tau^*$ is 
any trainpath which intersects 
the complementary component $C_\tau$ 
of $\tau$ then every
component of $\rho(-\infty,\infty)\cap C_\tau$ is a compact arc with
endpoints on adjacent sides of $C_\tau$. 
In particular, a lift to ${\bf H}^2$ 
of such a trainpath is a quasi-geodesic in ${\bf H}^2$ 
whose endpoints meet at most one of the two arcs $a,b\subset \partial {\bf H}^2$.
Since the support of 
the horizontal measured geodesic lamination $\nu$ of $q$  is 
carried by $\tau^*$ by assumption, 
every leaf of the support of $\nu$ corresponds to a biinfinite 
trainpath on $\tau^*$ and hence a lift to ${\bf H}^2$ of such a leaf
does not connect the arcs $a,b\subset \partial{\bf H}^2$. 
This contradicts the
assumption that $q$ has a vertical saddle connection and hence 
we indeed have $q\in {\cal Q}^1(m_1,\dots,m_\ell;-m)$.

Let ${\cal P}(\mu)\subset {\cal P\cal M\cal L}$ be the 
open set of all projective measured
geodesic laminations whose support is distinct from the support of $\mu$. 
Then
the assignment $\psi$ which associates to a projective measured
geodesic lamination $[\nu]\in {\cal P}(\mu)$ 
the area one quadratic differential $q(\mu,[\nu])$ with vertical 
measured geodesic lamination $\mu$ and horizontal
projective measured geodesic lamination $[\nu]$ is a homeomorphism
of ${\cal P}(\mu)$ onto a strong stable manifold in ${\cal Q}^1(S)$.

The  projectivization $P{\cal V}^*(\tau)$ of 
${\cal V}^*(\tau)$ is homeomorphic to a ball in a real
vector space of dimension $h-1$, and this is just
the dimension of a strong stable manifold
in a component of ${\cal Q}^1(m_1,\dots,m_\ell;-m)$.
Therefore by the above discussion
and invariance of domain, there is a component
$\tilde {\cal Q}$ of the stratum
${\cal Q}^1(m_1,\dots,m_\ell;-m)$ such that 
the restriction of the map $\psi$ 
to $P{\cal V}^*(\tau)$ is a homeomorphism 
of $P{\cal V}^*(\tau)$ onto 
the closure of an open subset of a
strong stable manifold $W^{ss}_{\tilde {\cal Q}}(q)\subset \tilde{\cal Q}$.

Similarly, 
if $q\in {\cal Q}(\tau)$
is defined by $\mu\in {\cal V}_0(\tau),\nu\in {\cal V}^*(\tau)$ and if
the support of $\nu$ is contained in 
${\cal L\cal L}(m_1,\dots,m_\ell;-m)$ then
$q\in {\cal Q}^1(m_1,\dots,m_\ell;-m)$ by the 
above argument.
Moreover, for every
$[\mu]\in P{\cal V}(\tau)$ the pair $([\mu],\nu)$ 
defines a quadratic differential which is contained in a
strong unstable manifold $W^{su}_{\tilde {\cal Q}}(q)$ 
of a component $\tilde {\cal Q}$ of 
the stratum ${\cal Q}^1(m_1,\dots,m_\ell;-m)$, and the set of
these quadratic differentials equals the closure of an open subset
of $W^{su}_{\tilde {\cal Q}}(q)$.

The set of 
quadratic differentials $q$ with the property that the 
support of the vertical (or of the horizontal) 
measured geodesic 
lamination of $q$ is minimal and of type $(m_1,\dots,m_\ell;-m)$
is dense and of full Lebesgue measure in
${\cal Q}^1(m_1,\dots,m_\ell;-m)$
\cite{M82,V86}. Moreover, this set is saturated for the 
stable (or for the unstable) foliation.
Thus by the above discussion, the
set of all measured geodesic laminations which are
carried by $\tau$ (or $\tau^*$) 
and whose support is minimal of type 
$(m_1,\dots,m_\ell;-m)$ 
is dense in ${\cal V}(\tau)$ (or in ${\cal V}^*(\tau)$). 
As a consequence, the set of all pairs
$(\mu,\nu)\in {\cal V}(\tau)\times
{\cal V}^*(\tau)$ with $\iota(\mu,\nu)=1$ 
which correspond to a quadratic
differential $q\in {\cal Q}^1(m_1,\dots,m_\ell;-m)$ is 
dense in the set of all pairs 
$(\mu,\nu)\in 
{\cal V}(\tau)\times {\cal V}^*(\tau)$ with
$\iota(\mu,\nu)=1$. Thus
the set ${\cal Q}(\tau)$ is contained in the closure of a
component $\tilde {\cal Q}$ of the 
stratum ${\cal Q}^1(m_1,\dots,m_\ell;-m)$. 
Moreover, by reasons of dimension,
$\{\Phi^tq\mid q\in {\cal Q}(\tau),t\in [-\delta,\delta]\}$ contains
an open subset of this component. This shows the first part
of the proposition.

Now
if $\tau\in {\cal L\cal T}(m_1,\dots,m_\ell;-m)$ 
is orientable and if $\mu$ is a
geodesic lamination which 
is carried by $\tau$, then $\mu$ inherits 
an orientation from an 
orientation of $\tau$. The orientation of $\tau$
together with the orientation of $S$ determines an orientation
of the dual bigon track $\tau^*$ (see \cite{PH92}, 
and these two orientations
determine the orientation of $S$. This implies that
any geodesic lamination carried by $\tau^*$ admits
an orientation, and if $(\mu,\nu)$ jointly
fill up $S$ and if $\mu$ is carried by $\tau$,
$\nu$ is carried by $\tau^*$ then the orienations
of $\mu,\nu$ determine the orientation of $S$. As a consequence,
the singular euclidean
metric on $S$ defined by the quadratic differential
$q$ of $(\mu,\nu)$ is the square of a holomorphic
one-form. The proposition follows.
\end{proof}

If $\tilde {\cal Q}$ is a component of a stratum 
${\cal Q}^1(m_1,\dots,m_\ell;-m)$ and if the large train
track 
$\tau\in {\cal L\cal T}(m_1,\dots,m_\ell;-m)$ 
is such that ${\cal Q}(\tau)\cap \tilde {\cal Q}\not=
\emptyset$ then we say that $\tau$ \emph{belongs} to 
$\tilde {\cal Q}$, and we write $\tau\in {\cal L\cal T}(\tilde {\cal Q})$.
The next proposition is a converse
to Proposition \ref{structure} and  shows that train tracks can be used to define
coordinates on strata.

\begin{proposition}\label{all}
\begin{enumerate}
\item For every
$q\in {\cal Q}^1(m_1,\dots,m_\ell;-m)$ 
there is a large non-orientable train track
$\tau\in {\cal L\cal T}(m_1,\dots,m_\ell;-m)$ 
and a number
$t\in \mathbb{R}$ 
so that $\Phi^tq$ is an interior point of ${\cal Q}(\tau)$.
\item For every $q\in {\cal H}^1(k_1,\dots,k_s)$
there is a large orientable train track 
$\tau\in {\cal L\cal T}(2k_1,\dots,2k_s;0)$ 
and a number
$t\in \mathbb{R}$ so that $\Phi^tq$ is an interior
point of ${\cal Q}(\tau)$.
\end{enumerate}
\end{proposition}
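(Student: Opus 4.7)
The plan is to read off the train track $\tau$ directly from the singular Euclidean structure defined by $q$, via a decomposition of $S$ into flat rectangles with vertical and horizontal sides. I treat the non-orientable case (1) in detail; the orientable case (2) is entirely parallel, and the train track produced for an abelian differential automatically inherits an orientation from the horizontal direction of $q$.

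The first step is the generic case in which $q$ has neither vertical nor horizontal saddle connections and whose horizontal foliation is minimal; by standard results of Masur and Veech such $q$ are dense in every component of every stratum. For small enough $\epsilon>0$, the horizontal $\epsilon$-segment starting at each horizontal separatrix of $q$ can be extended along the horizontal foliation until it first meets another such segment. Their union cuts the complement in $S$ of the singular points of $q$ into finitely many open flat rectangles $R_1,\dots,R_N$ with vertical and horizontal sides. Collapsing every vertical leaf segment inside each $R_i$ to a point defines a quotient $1$-complex $\tau\subset S$ whose $C^1$-structure inherited from the horizontal direction makes it a generic train track. At a zero of order $m_i$ of $q$ the $m_i+2$ horizontal sectors around the zero bound an $(m_i+2)$-gon complementary component of $\tau$, while at a simple pole one obtains a once-punctured monogon, so $\tau$ has topological type $(m_1,\dots,m_\ell;-m)$. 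The horizontal widths of the $R_i$ assemble into a transverse measure on $\tau$ which is positive on every branch and, after rescaling by a suitable $\Phi^t$ so that the total mass is one, coincides with the vertical measured geodesic lamination of $\Phi^tq$. Each biinfinite horizontal leaf of $q$ enters and exits each $R_i$ through its vertical sides and therefore traces out a biinfinite trainpath on the dual bigon track $\tau^*$; the heights of the $R_i$ likewise give a tangential measure on $\tau$ positive on every branch corresponding to the horizontal measured lamination of $q$. Thus $\tau\in {\cal L\cal T}(m_1,\dots,m_\ell;-m)$ is large, $\Phi^tq\in {\cal Q}(\tau)$, and since positivity of both measures on every branch is an open condition in the stratum, $\Phi^tq$ is in fact an interior point of ${\cal Q}(\tau)$.

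The main obstacle is the case when $q$ has vertical or horizontal saddle connections. The horizontal prongs then collide before reaching their first returns, and the naive collapse produces complementary components around pairs of zeros joined by a saddle connection which are strictly larger than $(m_i+2)$-gons. The fix is to resolve each offending saddle connection by inserting diagonal branches subdividing the oversized complementary components into $(m_i+2)$-gons. Lemma \ref{simpleex} guarantees that each such insertion preserves recurrence and correctly records the dimension change of ${\cal V}(\tau)$, while Lemma \ref{nosplitor}, read backwards via the splits that undo the added diagonals, constrains the resolutions so that $\tau$ can be made non-orientable in case (1) (and orientable in case (2), with the choice dictated by the horizontal orientation). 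It then remains to verify that the added branches receive strictly positive vertical and tangential weight; this is where the freedom to choose $t$ is used, together with continuity of the carrying and dual carrying operations and density of saddle-connection-free differentials, and where the bulk of the technical work lies.
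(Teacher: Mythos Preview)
Your generic-case construction is a genuine alternative to the paper's argument: you work directly with the singular euclidean structure of $q$ (a zippered-rectangle decomposition), whereas the paper passes to the hyperbolic metric, takes the support $\mu$ of the vertical measured lamination, and uses a train track $\tau_\epsilon$ that \emph{$\epsilon$-follows} $\mu$ in the sense of \cite{H09a}. Both routes yield a large train track of the correct topological type when $q$ has no vertical saddle connections. The hyperbolic approach has the advantage that it separates the two tasks cleanly: the topological type of $\tau_\epsilon$ is read off from the complementary components of $\mu$, and then the argument of Proposition~\ref{structure} is reused to see that the horizontal lamination lies in the interior of ${\cal V}^*(\tau_\epsilon)$. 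Your flat approach bundles these together, which is efficient but makes the verification that $\tau$ is \emph{large} (and not merely recurrent and transversely recurrent) less transparent.

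That said, your write-up of the rectangle construction is garbled: you take horizontal segments along horizontal separatrices, extend them along the horizontal foliation, and then collapse \emph{vertical} leaf segments --- but collapsing the vertical direction produces a train track whose branches are horizontal, which carries the horizontal lamination, not the vertical one. The roles of horizontal and vertical need to be sorted out consistently; as written, the claim that ``horizontal widths give the vertical measured lamination'' does not match the collapse you describe.

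The more serious gap is the non-generic case. The proposition is a statement about \emph{every} $q$ in the stratum, and you yourself flag that ``the bulk of the technical work lies'' in the case with saddle connections --- and then do not do it. Invoking Lemma~\ref{simpleex} only tells you that adding a small branch to a recurrent track keeps it recurrent and raises $\dim{\cal V}$ by one; it says nothing about whether the \emph{specific} transverse and tangential measures coming from $q$ are positive on the new branch, nor does it give full recurrence or full transverse recurrence of the extended track. Your appeal to Lemma~\ref{nosplitor} ``read backwards'' is also not justified: that lemma constrains splits of an already fully recurrent track, not additions of diagonals to a track that is not yet known to be large. The paper handles this case concretely: for each vertical saddle connection $s$ it identifies the two non-adjacent sides of the relevant complementary polygon of $\tau_\epsilon$ that are linked by the horizontal leaves crossing $s$, builds an embedded rectangle $R_s$ between them, and collapses $R_s$ to a single large branch $b_s$. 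This makes it immediate that the vertical measure is positive on $b_s$ (it carries $\tau_\epsilon$) and that the horizontal lamination gives positive tangential weight to $b_s$ (from the horizontal leaves crossing $s$). The case where the support of $\mu$ does not even fill $S$ (e.g.\ contains a closed curve component) requires a further explicit construction that your sketch does not address at all.
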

\begin{proof}
Fix a complete hyperbolic metric on $S$ of finite
volume. Define the \emph{straightening} of a train track
$\tau$ to be the immersed graph in $S$ whose vertices
are the switches of $\tau$ and whose edges are
the geodesic arcs which are homotopic to the
branches of $\tau$ with fixed endpoints. 

The hyperbolic metric induces a distance function on the 
projectivized tangent bundle of $S$.
As in Section 3 of \cite{H09a}, we say
that for some $\epsilon >0$ 
a train track $\tau$ \emph{$\epsilon$-follows} a
geodesic lamination $\mu$ if
the tangent lines of the straightening of $\tau$
are contained in the $\epsilon$-neighborhood of the 
tangent lines of $\mu$ in the projectivized tangent bundle of $S$ 
and if moreover the
straightening of any trainpath on $\tau$ is a piecewise
geodesic whose exterior angles at the breakpoints are
not bigger than $\epsilon$.
By Lemma 3.2 of \cite{H09a}, for every geodesic lamination 
$\mu$ and 
every $\epsilon>0$ there is a transversely recurrent
train track which carries $\mu$ and $\epsilon$-follows
$\mu$.

Let $q\in {\cal Q}^1(m_1,\dots,m_\ell;-m)$. Assume first that
the support $\mu$ of the vertical 
measured geodesic lamination of $q$ is large
of type $(m_1,\dots,m_\ell;-m)$. This is equivalent
to stating that $q$ does not have vertical
saddle connections. For $\epsilon >0$ 
let $\tau_\epsilon$ be a 
train track which carries $\mu$
and $\epsilon$-follows $\mu$. 
If $\epsilon >0$ is sufficiently small then
a carrying map $\mu\to \tau_{\epsilon}$ defines a 
bijection of the complementary components of $\mu$ onto
the complementary components of $\tau_\epsilon$. 
The transverse measure on $\tau_\epsilon$ defined by the vertical
measured geodesic lamination of $q$ is positive. 

Let
$\tilde C\subset {\bf H}^2$ be a complementary component of 
the preimage of $\mu$ in the hyperbolic plane ${\bf H}^2$.
Then $\tilde C$ is an ideal polygon whose vertices 
decompose the ideal boundary $\partial {\bf H}^2$ into 
finitely many arcs $a_1,\dots,a_k$ ordered counter-clockwise in
consecutive order.
Since $q$ does not have vertical saddle connections, 
the discussion in the 
proof of Proposition \ref{structure} shows the following. Let $\ell$ 
be a leaf of the preimage in ${\bf H}^2$ of the support $\nu$ of the 
horizontal measured 
geodesic lamination of $q$. Then the two 
endpoints of $\ell$ in ${\bf H}^2$ either are both contained in the
interior of the same arc $a_i$ or in the interior of two adjacent arcs
$a_i,a_{i+1}$.
As a consequence, for sufficiently small $\epsilon$
the geodesic
lamination $\nu$ is carried by the dual 
bigon track $\tau_\epsilon^*$ of $\tau_\epsilon$ (see the characterization 
of the set of measured geodesic laminations carried by
$\tau_\epsilon^*$ in \cite{PH92}).
Moreover, for any two 
adjacent subarcs $a_i,a_{i+1}$ of $\partial {\bf H}^2$ cut out by $\tilde C$, 
the transverse measure of the set of all leaves of the preimage of 
$\nu$ connecting these sides is positive. 
Therefore for sufficiently small $\epsilon$,
the horizontal measured geodesic lamination $\nu$
of $q$ defines an interior
point of ${\cal V}^*(\tau_\epsilon)$.

Now the set of quadratic differentials $z$ so that 
the support of the horizontal measured geodesic lamination of $z$ 
is large of type $(m_1,\dots,m_\ell;-m)$ is dense in 
the strong stable manifold $W^{ss}_{\tilde {\cal Q},{\rm loc}}(q)$ of $q$.
The above reasoning shows that for 
such a quadratic differential $z$ and for 
sufficiently
small $\epsilon$, the horizontal measured geodesic lamination 
of $z$ is carried by $\tau_\epsilon^*$. But this just means that
$\tau_\epsilon\in {\cal L\cal L}(m_1,\dots,m_\ell;-m)$. 
Moreover, 
if $r>0$ is the total weight which the vertical
measured geodesic lamination  puts on  
$\tau_\epsilon$ then
$\Phi^{-\log r}q$ is an interior point of 
${\cal Q}(\tau_\epsilon)$.
Thus $\tau_\epsilon$
satisfies the requirement in the proposition. 
Note that $\tau_\epsilon$ is necessarily non-orientable.

If $q\in {\cal H}^1(k_1,\dots,k_s)$ is such that the
support of the 
vertical measured geodesic lamination of $q$ is 
large of type $(2k_1,\dots,2k_s;0)$ then the above
reasoning also applies and yields an oriented large train track
with the required property.

Consider next the case that the support $\mu$ of 
the vertical measured geodesic lamination
of $q$ fills up $S$  but is 
not of type $(m_1,\dots,m_\ell;-m)$. Then $q$ has a vertical 
saddle connection. The set of all vertical saddle connections of $q$ is
a finite disjoint union $T$ of finite trees. 
The number of edges of 
this union of trees is uniformly bounded.
For $\epsilon>0$ let $\tau_\epsilon$ be a train track
which $\epsilon$-follows $\mu$ and carries $\mu$. 
If $\epsilon$ is sufficiently small then a carrying map
$\mu\to \tau_\epsilon$ defines a 
bijection between the complementary
components of $\mu$ and the complementary components of 
$\tau_\epsilon$ which induces a bijection between
their sides as well.

Modify $\tau_\epsilon$ as follows. Up to isotopy, 
a vertical saddle connection $s$
of $q$ is contained in a complementary component $C_s$ of $\tau_\epsilon$
which corresponds to the complementary component
of $\mu$ determined by $s$ 
(see the proof of Proposition \ref{structure}). Since
a carrying map $\mu\to \tau$ determines a bijection 
between the sides of the complementary components of $\mu$ and  the
sides of the complementary components of $\tau$,
the horizontal lines crossing through $s$ determine two non-adjacent sides
$c_1,c_2$ of $C_s$ (see once more
the discussion in the proof of Proposition \ref{structure}). Choose  
an embedded rectangle $R_s\subset C_s$ whose boundary
intersects the boundary of $C_s$ in two opposite sides 
contained in the interior of the sides $c_1,c_2$ of $C_s$. Up to an 
isotopy we may assume that 
these rectangles $R_s$ where $s$ runs through the vertical 
saddle connections of $q$
are pairwise disjoint. Collapse each of the
rectangles $R_s$ to a single segment in such a way that the 
two sides of $R_s$ which are contained in $\tau_\epsilon$ are identified
and form a single large branch $b_s$ as shown in Figure C. 
The branch $b_s$ can be isotoped to the saddle connection $s$. 
Let $\eta$ be the train track constructed in this way. Then $\eta$
is of topological type $(m_1,\dots,m_\ell;-m)$.
\begin{figure}[ht]
\begin{center}
\psfrag{Te}{$\tau_\epsilon$}
\psfrag{T1}{$\eta$}
\psfrag{q}{$q$}
\psfrag{s}{$s$}
\psfrag{Figure B}{Figure C} 
\includegraphics[width=0.9\textwidth]{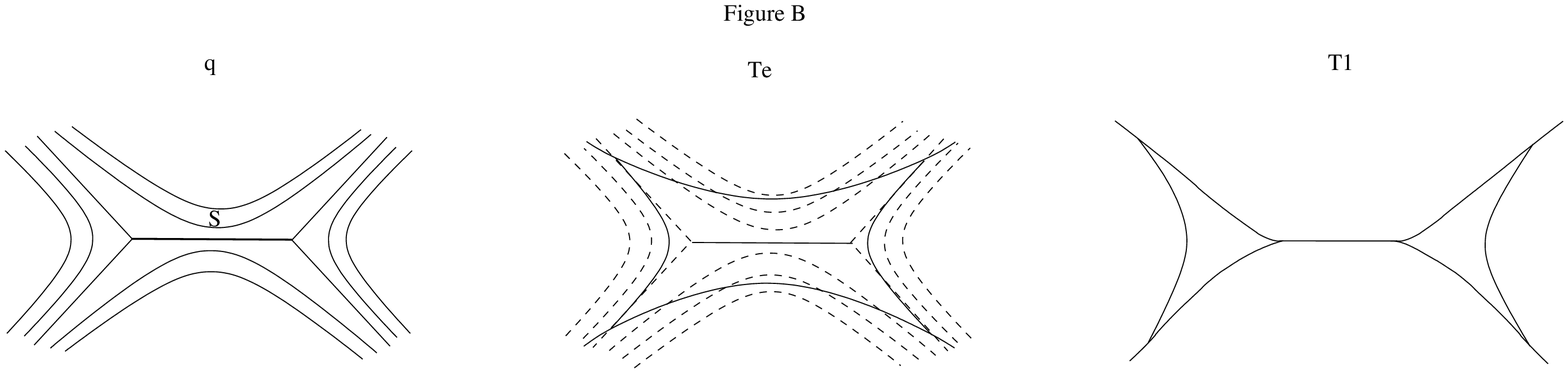}
\end{center}
\end{figure}

The train track $\tau_\epsilon$ can be obtained from $\eta$ by splitting
$\eta$ at each of the large branches $b_s$ and removing the 
diagonal of the split. In particular, $\eta$ carries $\tau_\epsilon$ and hence $\mu$.
The transverse measure on $\eta$ defined by the vertical
measured geodesic lamination of $q$ is positive
and consequently $\eta$ is recurrent. Moreover,  for sufficiently small 
$\epsilon$, the horizontal  measured geodesic lamination 
of $q$ is carried by  $\eta^*$. As above, we conclude that if $\epsilon>0$
is sufficiently small then 
$\eta$ is fully transversely recurrent and in fact large.
There is a  tangential measure on $\eta$ which is defined by the 
horizontal measured geodesic lamination of $q$ and
which gives positive weight to each of the branches $b_s$. Thus by possibly
decreasing once more the size of $\epsilon$, 
we can guarantee that 
for some $t\in \mathbb{R}$ the quadratic differential $\Phi^tq$ is an
interior point of ${\cal Q}(\eta)$. 
As a consequence, $\eta$ satisfies the
requirements in the proposition.

If the support  $\mu$ of the 
vertical measured geodesic lamination of $q$
is arbitrary then we proceed in the same way. Let $\epsilon >0$ be
sufficiently small that there is a
bijection between the complementary components of the train track
$\tau_\epsilon$ and the complementary
components of 
the support of $\mu$.
As before, we use the horizontal measured
foliation of $q$ to construct for every vertical saddle connection
$s$ of $q$ an embedded rectangle $R_s$ in $S$ whose interior
is contained in a complementary component of $\tau_\epsilon$ and
with two opposite sides on $\tau_\epsilon$ in such a way that 
the rectangles $R_s$ are pairwise disjoint. Collapse each of the rectangles
to a single arc. The resulting
train track has the required properties.

We discuss in detail the case that the support of $\mu$ contains a 
simple closed curve component 
$\alpha$. 
Then $\tau_\epsilon$ contains $\alpha$ as a simple closed curve
component as well. 
There is a vertical flat cylinder $C$ for $q$ foliated by smooth circles
freely homotopic to $\alpha$. The boundary $\partial C$ of $C$ 
is a finite union of vertical saddle connections. Some of these
saddle connections may occur twice on the boundary of $C$
(if $\mu=\alpha$ then this holds true for each of these saddle connections).
Assume without loss of generality
(i.e. perform a suitable isotopy) that $\alpha$ is a closed vertical
geodesic contained in the interior of $C$.

For each saddle connection $s$ in the boundary of $C$ 
choose a compact 
arc $a_s$ contained in the interior of $s$. 
Choose moreover a foliation ${\cal F}$ of $C$
by compact arcs with endpoints on the boundary of $C$ which is
transverse to the foliation of $C$ by
the vertical closed geodesics
and such that the following holds true. If $u_1,u_2$ are two distinct 
half-leaves of ${\cal F}$ with one endpoint in the arc $a_s$
and the second endpoint on $\alpha$ 
then the endpoints on $\alpha$ of $u_1,u_2$ 
are distinct. In particular, each arc $a_s$ which occurs
twice in the boundary of the cylinder $C$ determines an embedded
rectangle $R_s$ in $S$. Two opposite sides of $R_s$ are 
disjoint subarcs of $\alpha$; we call these
sides the vertical sides.  Each of the other two opposite sides consists
of two half-leaves of the foliation ${\cal F}$ which begin at a boundary
point of $a_s$ and end in 
a point of $\alpha$. The interior of the arc $a_s$ is contained in the
interior of $R_s$. 

The rectangles $R_s$ are pairwise disjoint. Therefore each of the rectangles
$R_s$ 
can be collapsed in $S$ to the arc $a_s$.
The resulting graph is a train track which carries 
$\alpha$ and contains for every 
saddle connection $s$ which occurs twice in the boundary of $C$ a large
branch $b_s$. 

If $s$ is a saddle connection 
on the boundary of $C$ which separates $C$ from 
$S-C$ then the arc $a_s$ is contained in 
the interior of a rectangle $R_s$ with 
one side contained in $\alpha$ and the second side contained in 
the interior of a branch of the
component of $\tau_\epsilon$ different from $\alpha$.
This branch is determined by the horizontal geodesics which cross through $s$.
As before, the rectangle $R_s$ is collapsed to a single branch.

To summarize, the train track $\tau_\epsilon$ can be modified
in finitely many steps to a train track $\eta$ with the required 
properties by collapsing 
for every vertical saddle connection of $q$ a rectangle with two sides
on $\tau_\epsilon$ to a single large branch.
This completes the construction and finishes the proof of the proposition.
\end{proof}

{\bf Remark:} In the proof of Lemma \ref{all}, we constructed
explicitly for every quadratic differential $q\in {\cal Q}(S)$ a 
train track $\tau_q$ belonging to the stratum of $q$. 
If $q$ is a one-cylinder Strebel differential then 
the train track $\tau_q$ is uniquely determined by the 
combinatorics of its vertical saddle connections   
on the boundary of the cylinder.
This fact in turn can be used to obtain a purely
combinatorial proof of the
classification results of Kontsevich-Zorich \cite{KZ03}  
and of Lanneau \cite{L08}.

Let again $\tau\in {\cal L\cal T}(m_1,\dots,m_\ell;-m)$.
Then $\tau\in {\cal L\cal T}(\tilde {\cal Q})$ for a component 
$\tilde {\cal Q}$ of ${\cal Q}^1(m_1,\dots,m_\ell;-m)$.
For every $\mu\in {\cal V}_0(\tau)$ and
every $\nu\in {\cal V}^*(\tau)$ so that the pair $(\mu,\nu)$ jointly
fills up $S$ there is a unique $q\in {\cal Q}(\tau)$ 
with vertical measured geodesic lamination $\mu$ and
horizontal measured geodesic lamination 
$\iota(\mu,\nu)^{-1}\nu$. Thus if 
$P{\cal V}^*(\tau)$ denotes the projectivization of the cone
${\cal V}^*(\tau)$ then for all $a<b$ 
there is a natural homeomorphism $\psi$ from 
the subset of ${\cal V}_0(\tau)\times P{\cal V}^*(\tau)\times [a,b]$
corresponding to 
pairs $(\mu,[\nu])$ which jointly fill up $S$ onto 
$C=\cup_{t\in [a,b]}\Phi^t{\cal Q}(\tau)$. 
The set $C$ is the closure in ${\cal Q}^1(S)$ of an open
subset of $\tilde Q$.
We say that the map $\psi$ 
defines on $C$ a \emph{train track product structure}.   
If $A\subset {\cal V}_0(\tau),B\subset P{\cal V}^*(\tau)$ 
are Borel sets then we also say that the image
of $A\times B\times [a,b]$ 
under the map $\psi$ has a train track product
structure. If $q\in {\cal Q}(\tau)$ and if $C$ is a neighborhood of 
$q$ with a train track product structure which is precisely invariant
under the stabilizer of $q$ in ${\rm Mod}(S)$ then we say that 
the projection of $C$ to ${\cal Q}(S)$ has a 
train track product structure.

The following proposition establishes product coordinates near boundary
points of strata. For this let again ${\cal Q}$ be a component of 
the stratum ${\cal Q}(m_1,\dots,m_\ell;-m)$, 
with closure $\overline{\cal Q}$. Let $\lambda$ be the
Lebesgue measure on ${\cal Q}$.

\begin{proposition}\label{forwardnondiv}
For every $q\in \overline{\cal Q}-{\cal Q}$ and every 
closed neighborhood $A$ of $q$ in $\overline{\cal Q}$  
there  is a closed neighborhood 
$K\subset A$ of $q$ in $\overline{\cal  Q}$ with the following properties.
\begin{enumerate}
\item
$K=\cup_{i=1}^kK_i$ for some $k\geq 1$, and $\lambda(K_i\cap K_j)=0$
for $i\not=j$.
\item For each $i$, the set $K_i$ contains
$q$ and has a
train track product structure. 
\end{enumerate}
\end{proposition}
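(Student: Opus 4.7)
The plan is to adapt the train track construction from Proposition \ref{all} to build a finite family of large train tracks $\tau_1,\dots,\tau_k$ belonging to $\tilde {\cal Q}$ (the component of ${\cal Q}^1(m_1,\dots,m_\ell;-m)$ whose projection to moduli space is ${\cal Q}$) such that each ${\cal Q}(\tau_i)$ has the preimage $\tilde q$ of $q$ in its closure and such that the associated train track product sets collectively cover a neighborhood of $q$ in $\overline {\cal Q}$. The sets $K_i$ are then small closed product neighborhoods of $q$ constructed from the $\tau_i$, intersected with $A$.

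Since $q$ lies in the boundary of $\overline{\cal Q}$, it belongs to a boundary stratum ${\cal Q}(m_1',\dots,m_{\ell'}';-m)$ where $(m_i')$ is obtained from $(m_i)$ by grouping and summing. Lift $q$ to $\tilde q\in {\cal Q}^1(S)$ and apply the construction from the proof of Proposition \ref{all} to produce a large train track $\tau_q\in {\cal L\cal T}(m_1',\dots,m_{\ell'}';-m)$ with $\Phi^t\tilde q$ in the interior of ${\cal Q}(\tau_q)$ for some small $t$. Next, enumerate the finitely many combinatorial refinements of $\tau_q$ obtained by inserting small diagonal branches inside each complementary $(m_i'+2)$-gon so as to subdivide it into polygons matching the refined tuple $(m_1+2,\dots,m_\ell+2)$. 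Each refinement is a candidate train track of type $(m_1,\dots,m_\ell;-m)$. Successive applications of Lemma \ref{simpleex} show that recurrence is preserved and that the dimension of ${\cal V}$ increases by exactly one per added branch, so the resulting candidate has the right dimension count from Corollary \ref{dimensioncount}. Transverse recurrence carries over because the dual bigon track $\tau_i^*$ is obtained from $\tau_q^*$ by inserting bigons, and full largeness follows from the density of minimal laminations of type $(m_1,\dots,m_\ell;-m)$ in ${\cal V}^*(\tau_i)$, as in the last paragraph of the proof of Proposition \ref{structure}. Retaining only the refinements $\tau_i\in {\cal L\cal T}(\tilde{\cal Q})$ gives the desired finite list.

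For each such $\tau_i$ the vertical lamination $\mu$ of $q$ is carried by $\tau_q$ and hence by the refinement $\tau_i$, and the horizontal lamination $\nu$ is carried by $\tau_q^*$ and hence by the finer dual track $\tau_i^*$. Thus $q$ lies in the closure of ${\cal Q}(\tau_i)$, and one can choose a closed neighborhood $K_i\subset A$ of $q$ in $\overline{\cal Q}$ with train track product structure from $\tau_i$, via the orbifold quotient construction of Subsection 4.1. To verify the covering property that $\bigcup_i K_i$ contains a neighborhood of $q$, take $z\in \overline{\cal Q}$ sufficiently close to $q$. A short flow translate of the vertical lamination $\mu_z$ of $z$ is Hausdorff-close to $\mu$, hence by the $\epsilon$-following property is carried by $\tau_q$; the concrete pattern of short vertical saddle connections of $z$ is a discrete datum with only finitely many possible types, and selects exactly one refinement $\tau_i$ that carries $\mu_z$. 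The horizontal lamination of $z$ is then carried by the corresponding $\tau_i^*$ by continuity. The measure-zero intersection property is routine: a differential in $K_i\cap K_j$ with $i\ne j$ has vertical (respectively horizontal) lamination simultaneously carried by $\tau_i$ and $\tau_j$ (respectively $\tau_i^*$ and $\tau_j^*$), which imposes at least one additional linear constraint on its transverse measure and thus confines it to a proper real-analytic subvariety of $\tilde{\cal Q}$.

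The main obstacle will be the covering step. One must verify that every combinatorial configuration of short vertical saddle connections of a differential in $\tilde {\cal Q}$ near $q$ is realized by one of the finitely many refinements $\tau_i$, which amounts to a combinatorial classification of how the zero pattern $(m_1,\dots,m_\ell)$ can collapse to $(m_1',\dots,m_{\ell'}')$, equivalently of the polygonal subdivisions of each $(m_i'+2)$-gon by non-crossing diagonals. A secondary subtlety is preserving full transverse recurrence of $\tau_i$ after refinement; I would handle this by comparing $\tau_i^*$ with the horizontal lamination of a generic differential in $\tilde{\cal Q}$ approaching $q$ and invoking the density argument from Proposition \ref{structure} again. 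The orbifold subtleties at $q$ are handled exactly as in the discussion of metric orbifold balls in Subsection 4.1.
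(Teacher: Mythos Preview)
Your overall architecture matches the paper's: start from a large train track $\tau_q$ of the boundary type via Proposition~\ref{all}, produce finitely many simple extensions $\tau_i$ of type $(m_1,\dots,m_\ell;-m)$ by inserting diagonals into complementary polygons, and use the resulting ${\cal Q}(\tau_i)$ as the pieces $K_i$. The measure-zero intersection is handled the same way in both (the shared face is ${\cal V}(\tau_q)$, codimension one).

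There is, however, a genuine gap in how you certify that the relevant refinements are \emph{large} and that they \emph{cover}. You argue recurrence via Lemma~\ref{simpleex} and full transverse recurrence by asserting that $\tau_i^*$ arises from $\tau_q^*$ by ``inserting bigons'' together with a density appeal to Proposition~\ref{structure}. Neither step works as stated: a simple extension of a large train track need not be fully recurrent (the examples after Lemma~\ref{nosplitor} show exactly this), the relation between $\tau_i^*$ and $\tau_q^*$ is not a bigon insertion, and the density paragraph in Proposition~\ref{structure} already \emph{assumes} largeness rather than establishing it. Filtering to those $\tau_i\in{\cal L\cal T}(\tilde{\cal Q})$ sidesteps this, but then your covering argument becomes circular: you must show that the refinement ``selected'' by the short saddle connections of a nearby $z\in{\cal Q}$ lies in the retained list, i.e.\ is large, and the combinatorial classification of polygonal subdivisions does not give this.

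The paper closes this gap with a single geometric device that you should adopt: take a sequence $q_i\to q$ in ${\cal Q}$ without vertical saddle connections and pass to a Hausdorff-convergent subsequence of their (large, minimal) vertical laminations $\mu_i$. The limit $\xi$ is a large lamination of type $(m_1,\dots,m_\ell;-m)$ containing the support of the vertical lamination of $q$, and for small $\epsilon$ a train track $\eta$ that $\epsilon$-follows $\xi$ is automatically a simple extension of $\tau_q$. Because $\eta$ carries the large minimal $\mu_i$ for all large $i$, it is fully recurrent; the analogous argument on the horizontal side gives full transverse recurrence. Finitely many such limits $\xi_1,\dots,\xi_k$ occur, and compactness of lamination space in the Hausdorff topology yields covering directly. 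For $q$ with vertical saddle connections the paper does not refine $\tau_q$ further but instead rotates $q$ slightly by the $SO(2)$-action to reduce to the previous case; your proposal does not address this reduction.
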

\begin{proof} Our goal is to show that every point 
$q\in \overline{\cal Q}-{\cal Q}$  
has a closed
neighborhood $W$ in $\overline{\cal Q}$ 
with the following property. Let $\tilde {\cal Q}\subset 
{\cal Q}^1(S)$ be
a connected component of the preimage of ${\cal Q}$ 
and let $\tilde q$ be a 
lift of $q$ contained in the closure of $\tilde{\cal Q}$.
Then $W$ lifts to a 
contractible neighborhood $\tilde W$ of 
$\tilde q$ in the closure of $\tilde{\cal Q}$ 
which is precisely invariant under 
${\rm Stab}(\tilde q)$. Moreover, $\tilde W$ is 
contained in 
\[\cup_{j=1}^k\bigcup_{t\in [a_j,b_j]}\Phi^t{\cal Q}(\eta_j)\] 
for some $a_j<b_j$ 
where $\eta_j\in {\cal L\cal T}(\tilde {\cal Q})$
and where $\tilde q$ is contained
in the boundary of $\Phi^{-s_j}{\cal Q}(\eta_j)$ for some $s_j\in 
(a_j,b_j)$ $(j=1,\dots,k)$ .
For $i\not=j$ we have 
\begin{equation}\label{measureestimate}
\lambda\bigl(\bigcup_{t\in [a_i,b_i]}\Phi^t{\cal Q}(\eta_i)\cap 
\bigcup_{t\in [a_j,b_j]}\Phi^t{\cal Q}(\eta_j)\bigr)=0.\end{equation}

For this assume that 
$q\in {\cal Q}(n_1,\dots,n_s;-m)$ for some
$s<\ell$. Assume moreover for the moment 
that $q$ does not have vertical saddle connections.

Let $(q_i)\subset {\cal Q}$ be a sequence converging to $q$.
Since the subset of ${\cal Q}$ of 
quadratic differentials without
vertical saddle connection is dense in ${\cal Q}$, 
we may assume that for each $i$, $q_i$ does not
have a vertical saddle connection. 
Let $\tilde q_i\in \tilde{\cal Q}$ be a preimage of $q_i$ such that
$\tilde q_i\to \tilde q$.
For each $i$ the 
support $\mu_i$ of the vertical measured  
geodesic lamination of
$\tilde q_i$ is large of type $(m_1,\dots,m_\ell;-m)$.

We claim that up to passing to a subsequence,
the geodesic laminations $\mu_i$
converge in the Hausdorff topology to a large 
geodesic lamination $\xi$ of topological  type $(m_1,\dots,m_\ell;-m)$.
The lamination $\xi$ then contains the support $\nu$ of the vertical
measured geodesic lamination of $\tilde q$ as a sublamination.
Since $q$ does not have
vertical saddle connections, $\nu$ fills up $S$ and
$\xi$ can be obtained from $\nu$ by
adding finitely many isolated leaves.
These isolated leaves subdivide
some of the complementary components of $\nu$. 
The number of such limit laminations is uniformly bounded.

To see that this claim indeed holds true it is enough to
assume that $s=\ell-1$ and that $n_u=m_j+m_p$ for some
$j<p\leq \ell$ and some $u\leq s$ \cite{MZ08}- the purpose
of this assumption for our argument is to 
simplify the notations. Then 
for each sufficiently large $i$ 
the quadratic differential $\tilde q_i$ has a saddle connection $s_i$ connecting
a zero $x_1^i$ of order $m_j$ to 
a zero $x_2^i$ of order $m_p$ whose length 
(measured in the singular euclidean metric
defined by $\tilde q_i$) tends to zero as $i\to \infty$. 
More precisely, the
saddle connections $s_i$ converge to a zero $x_0$ of $\tilde q$ of 
order $n_u\geq 2$.
The length of any other saddle connection of $\tilde q_i$
is bounded from below
by a universal positive constant. 

Since $\tilde q_i$ 
does not have vertical saddle connections, locally near $x_1^i$ 
the interior of the saddle connection $s_i$ is contained in the interior of 
an euclidean sector based at $x_1^i$ of angle
$\pi$ bounded by two vertical separatrices $\alpha_1^i,\alpha_2^i$
of $\tilde q_i$ which issue from $x_1^i$. The union 
$\alpha_i=\alpha_1^i\cup \alpha_2^i$ is a smooth vertical geodesic line
passing through $x_1^i$, i.e. a
geodesic 
which is a limit in the compact open topology of geodesic segments
not passing through a singular point.
There are two 
vertical separatrices 
$\beta_1^i,\beta_2^i$ issuing from $x_2^i$ so that the sum of the angles
at $x_1^i,x_2^i$ of the (local) strip bounded by $\alpha_1^i,s_i,\beta_1^i$
equals $\pi$ and that the same holds true for the angle sum of the 
(local) strip bounded by $\alpha_2^i,s_i,\beta_2^i$. 
The vertical length of $s_i$ is positive. The union $\beta_i=\beta_1^i\cup 
\beta_2^i$ is a smooth vertical geodesic line passing through $x_2^i$.
 
Equip $S$ with the marked hyperbolic metric $P\tilde q\in {\cal T}(S)$.
For each $i$ 
lift the singular euclidean metric on $S$ defined by $\tilde q_i$ 
to a $\pi_1(S)$-invariant singular
euclidean metric on the universal covering ${\bf H}^2$ of $S$.
Let $\tilde s_i$ be a lift of the saddle connection $s_i$. Since $\tilde s_i$
is not vertical, 
the leaves of the vertical foliation of $\tilde q_i$ 
which pass through $\tilde s_i$ 
define a strip
of positive transverse measure in ${\bf H}^2$. This strip is bounded by
the two lifts $\tilde \alpha_i,\tilde \beta_i$ of 
the smooth vertical geodesics $\alpha_i,\beta_i$  
which pass through
the endpoints of $\tilde s_i$. 
As $i\to \infty$,
up to normalization and by perhaps passing to a subsequence, 
the vertical geodesics $\tilde\alpha_i,\tilde \beta_i$ converge 
in the compact open topology 
to vertical geodesics $\tilde\alpha,\tilde\beta$ for the singular euclidean metric
defined by
$\tilde q$ which pass through  
a preimage $\tilde x_0$ of the zero $x_0$ of $\tilde q$ of order $n_u=m_j+m_p\geq 2$.
By construction, the geodesics $\tilde \alpha,\tilde \beta$
coincide in a neighborhood of $\tilde x_0$. Since
there are no vertical saddle connections for $\tilde q$, we
necessarily have $\tilde \alpha=\tilde \beta$. Let 
$\tilde \gamma\subset {\bf H}^2$
be the hyperbolic geodesic  
with the same endpoints as $\tilde \alpha$ in the ideal boundary
of ${\bf H}^2$ (see \cite{L83} and the proof of Propositsion 
\ref{structure}). The projection of $\tilde \gamma$ to $S$ 
subdivides the complementary component
of $\nu$ containing $x_0$ into two ideal polygons with
$m_j+2$ and $m_p+2$ sides, respectively. 
The union of $\nu$ with this
geodesic is a large geodesic lamination $\xi$ of type $(m_1,\dots,m_\ell;-m)$.
This lamination is the limit in the Hausdorff topology of the laminations $\mu_i$.

Let $\xi_1,\dots,\xi_k\in {\cal L\cal L}(m_1,\dots,m_\ell;-m)$ be the 
(finitely many) large geodesic laminations obtained in this way. 
Each of the laminations $\xi_s$ contains $\nu$ as a sublamination, and
it is determined by a decomposition of a complementary $n_u+2$-gon of
$\nu$ into an ideal $m_j+2$-gon and an ideal $m_p+2$-gon. 
The set $\xi_1,\dots,\xi_k$ is invariant under the action of 
${\rm Stab}(\tilde q)$.
For sufficiently small $\epsilon>0$, a train track $\eta_j$ which carries $\xi_j$
and $\epsilon$-follows $\xi_j$ 
(for the hyperbolic metric $P\tilde q$) 
is a simple extension of a train track $\tau$ 
which carries $\nu$ and $\epsilon$-follows $\nu$.
The added branch is a diagonal of the complementary
$m_j+m_p+2$-gon of 
$\tau$ defined by the zero $x_0$ of $\tilde q$ of order $m_j+m_p$. It
decomposes this component  
into an $m_j+2$-gon and  an $m_p+2$-gon in a 
combinatorial pattern determined by $\xi_j$. 
The vertical measured geodesic lamination $\nu$ of $\tilde q$ 
defines a transverse measure on $\eta_j$ which gives 
full mass to the subtrack $\tau$ and hence it is contained in the
boundary of the cone ${\cal V}(\eta_j)$. 
We also may assume that the horizontal measured
geodesic lamination of $\tilde q$ is carried by the dual bigon track
$\eta_j^*$ (compare the proof of Lemma \ref{all}) and that the
set $\eta_1,\dots,\eta_k$ is invariant under the action of 
${\rm Stab}(\tilde q)$.

Since the set of 
geodesic laminations carried by a train track  is open and closed in 
the Hausdorff topology \cite{H09a},  for each $j$ the train track 
$\eta_j$ carries a minimal large geodesic lamination of 
type $(m_1,\dots,m_\ell;-m)$ (namely, the support of the 
vertical measured
geodesic lamination of a quadratic differential $\tilde q_i\in \tilde {\cal Q}$
sufficiently close to $\tilde q$
from the sequence which determines $\eta_j$) 
and hence it follows as in the
proof of Proposition \ref{structure} that $\eta_j\in {\cal L\cal T}(\tilde {\cal Q})$.
Moreover, if $s_j\in \mathbb{R}$ is such that $\Phi^{s_j}\tilde q\in 
{\cal Q}(\eta_j)$ then for every $\epsilon >0$ the set 
$\cup_j\cup_{t\in [-s_j-\epsilon,-s_j+\epsilon]}\Phi^t{\cal Q}(\eta_j)$ is 
a closed neighborhood of $\tilde q$ in the closure of 
$\tilde {\cal Q}$.

Now if 
$i\not=j$ then 
${\cal V}(\eta_i)\cap {\cal V}(\eta_j)={\cal V}(\tau)$ and hence this
intersection is contained in an affine subspace of codimension one.
Since the measure class of the conditional measures 
$\lambda^u$ of $\lambda$ coincides with the Lebesgue measure class
defined by the linear coordinates for the cone ${\cal V}(\eta_j)$, 
the equation (\ref{measureestimate}) holds true.

As a consequence, 
for suitable numbers $a_j<b_j$, the set
\[\cup_j\cup_{t\in [a_j,b_j]}\Phi^t{\cal Q}(\eta_j)\]
is a ${\rm Stab}(\tilde q)$-invariant closed neighborhood of $\tilde q$ in the closure
of $\tilde{\cal Q}$. 
In other words, 
there is  a ${\rm Stab}(\tilde q)$-invariant
finite collection of closed sets with 
train track product structures which cover a neighborhood of $\tilde q$ 
in the closure of $\tilde{\cal Q}$ 
and contain $\tilde q$ in their boundary. 
This completes the proof of the proposition in the case that 
the support of the vertical measured geodesic lamination of $\tilde q$
is large of type $(n_1,\dots,n_s;-m)$.

If the support of the vertical measured geodesic lamination of $\tilde q$ is
not large of type $(n_1,\dots,n_s;-m)$ then we argue in the same way.
In this case $\tilde q$ has a vertical 
saddle connection whose horizontal length is positive. 
Consider the action of the group $SO(2)$ 
on the space of quadratic differentials by rotation.
There is a sequence $\theta_j\in (0,\pi/2)$ with
$\theta_j\to 0$ such that the 
quadratic differential $e^{i\theta_j}\tilde q$ does not have
any vertical or horizontal saddle connection. Then  
the supports of the 
horizontal and the vertical
measured geodesic laminations of
$e^{i\theta_j}\tilde q$ are large of type $(n_1,\dots,n_s;-m)$.

Let $\tau\in {\cal L\cal T}(n_1,\dots,n_s;-m)$ be a train track
as in Lemma \ref{all} 
so that for some $\sigma>0$, 
$\Phi^\sigma \tilde q$ is an interior point of ${\cal Q}(\tau)$.
For sufficiently small $\theta$, say whenever 
$0<\vert \theta\vert <\epsilon$, we have
$e^{i\theta}\tilde q\in \cup_{s\in [\sigma-b,\sigma+b]}\Phi^s{\cal Q}(\tau)$ 
 where $b>0$ is a fixed number. If $\theta\in (-\epsilon,\epsilon)$ 
is such that
 $e^{i\theta}\tilde q$ does not have
 any vertical saddle connection then the argument in the beginning of 
 this proof shows that up to passing to a subsequence, for sufficiently 
 large $j$ the vertical measured geodesic lamination of $e^{i\theta}\tilde q_j$
 is carried by a simple extension of $\tau$ which 
 is large of type $(m_1,\dots,m_\ell;-m)$. As before, there are only
 finitely many such simple extensions, and these
 simple extensions define train track coordinates 
 on a neighborhood of $\tilde q$ in the closure of 
 $\tilde{\cal Q}$ as before.
 From this the proposition follows.
 \end{proof}

As an immediate consequence, we obtain the following.
Let $q\in \overline{\cal Q}-{\cal Q}$
and let 
$K=\cup_{i=1}^kK_i$ be as in Proposition \ref{forwardnondiv}.
Then for each $i\leq k$ there is an open subset $U_i\subset K_i$ of 
a strong unstable submanifold of ${\cal Q}$ whose  closure $A_i$ 
contains $q$. The set
\begin{equation}\label{wsuloc}
W^{su}_{{\cal Q},{\rm loc}}(q)=\cup_iA_i.\end{equation} 
is a compact subset of $W^{su}(q)$ which 
contains the intersection with $W^{su}(q)$ of every sufficiently small
neighborhood of $q$ in $\overline{\cal Q}$.  
Moreover, $\lambda^{su}(A_i\cap A_j)=0$ for $i\not=j$.

\section{Absolute continuity}

Let again
${\cal Q}$ be a connected component of a stratum
in ${\cal Q}(S)$. Then ${\cal Q}$ is invariant under the 
Teichm\"uller flow $\Phi^t$. 
For a periodic orbit
$\gamma\subset {\cal Q}$ for $\Phi^t$, the
Lebesgue measure supported in $\gamma$
is a $\Phi^t$-invariant Borel measure $\sigma(\gamma)$ 
on ${\cal Q}$ whose total mass equals the prime period 
$\ell(\gamma)$ of $\gamma$.
If we denote for $R>0$ by $\Gamma(R)$ the 
set of all periodic orbits for $\Phi^t$ of period at most
$R$ which are contained in ${\cal Q}$ then  
we obtain a finite $\Phi^t$-invariant 
Borel measure $\mu_R$ on ${\cal Q}$
by defining 
\begin{equation}\label{bowenmeasure}
\mu_R=e^{-hR}\sum_{\gamma\in \Gamma(R)}\sigma(\gamma).
\end{equation}

Let $\mu$ be any weak limit of the measures $\mu_R$ as
$R\to \infty$.
Then $\mu$ is a $\Phi^t$-invariant Borel measure on
${\cal Q}(S)$ supported in the closure $\overline{\cal Q}$ of ${\cal Q}$ 
(which may a priori be zero or 
locally infinite).
The purpose of this section is
to show

\begin{proposition}\label{absolute}
The measure $\mu$ on $\overline{\cal Q}$ satisfies $\mu\leq \lambda$.
\end{proposition}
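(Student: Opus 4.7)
The plan is to follow Margulis's classical strategy for constructing the measure of maximal entropy from periodic orbits, adapted to the Teichm\"uller flow by using the local product coordinates near boundary points of strata constructed in Proposition \ref{forwardnondiv}.

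First I would reduce the statement to a local estimate. Since $\mu$ is a Borel measure supported in $\overline{\cal Q}$ and since by Proposition \ref{forwardnondiv} every point of $\overline{\cal Q}$ admits arbitrarily small closed neighborhoods of the form $K=\bigcup_{i=1}^{k}K_i$ with each $K_i$ carrying a train track product structure and $\lambda(K_i\cap K_j)=0$ for $i\neq j$, it suffices to prove $\mu(B)\leq \lambda(B)$ for a single box $B=V(W_1,W_2,t_0)\subset {\cal Q}$ with a local product structure in the sense of \eqref{localproduct}.

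The product structure, together with item (b) in the definition surrounding \eqref{localproduct}, forces every connected component of the intersection of a periodic orbit $\gamma\in \Gamma(R)$ with $B$ to be an arc of length $2t_0$. Writing $N(\gamma,B)$ for the number of these arcs,
\[
\sigma(\gamma)(B)=2t_0\, N(\gamma,B),\qquad \mu_R(B)=2t_0\, e^{-hR}\sum_{\gamma\in \Gamma(R)}N(\gamma,B).
\]
The core of the argument is to estimate the total return count $\sum_\gamma N(\gamma,B)$. To each arc I would assign its midpoint $p$, a periodic point of period $T=\ell(\gamma)\leq R$, and then invoke disjointness of Bowen $(R,\epsilon)$-balls centered at distinct such midpoints. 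Combining the expansion relations $d\lambda^{su}\circ \Phi^t=e^{ht}d\lambda^{su}$ and $d\lambda^{ss}\circ \Phi^t=e^{-ht}d\lambda^{ss}$ with the local product decomposition $d\lambda=d\lambda^{ss}\times d\lambda^{su}\times dt$ yields a lower bound $\lambda(B_R(p,\epsilon))\geq c(\epsilon)e^{-hR}$; since these Bowen balls fit into a uniform enlargement of $B$, one obtains
\[
\sum_{\gamma\in \Gamma(R)}N(\gamma,B)\leq \frac{\lambda(B)}{c(\epsilon)}\,e^{hR}\,(1+o_R(1)).
\]
The parameters $t_0$ and $\epsilon$ can be adjusted so that the resulting multiplicative factor $2t_0/c(\epsilon)$ is at most one, because $B$ itself is essentially an $(R,\epsilon)$-Bowen ball in the center-unstable direction when $\epsilon$ is comparable to the diameter of $W_1$; this yields $\mu(B)\leq \lambda(B)$ after passing to the limit $R\to \infty$.

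The main obstacle is that the Anosov closing lemma and the lower bound on $\lambda(B_R(p,\epsilon))$ are only available uniformly on compact subsets of ${\cal Q}$, while $\overline{\cal Q}$ is non-compact and the Teichm\"uller flow fails to be uniformly hyperbolic in the thin part. I would handle this by combining the curve graph apparatus of Section 2 with the Hodge norm integration from \cite{ABEM10}: the map $F:\tilde{\cal A}\to \partial {\cal C}(S)$ of Lemma \ref{closed} together with the distances \eqref{distance} allow separating periodic orbits according to the behavior of their shadow in ${\cal C}(S)$, and Hodge norm integration shows that the $\mu_R$-mass contributed by orbits spending a definite proportion of their period deep in the cusps of moduli space decays with $R$. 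After discarding this negligible contribution, the uniform hyperbolicity estimate above applies to the remainder and delivers the required inequality $\mu(B)\leq \lambda(B)$.
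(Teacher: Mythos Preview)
Your outline captures the Margulis flavour but misses the mechanism that produces the \emph{sharp} constant $1$ in $\mu\leq\lambda$. A packing argument with Bowen balls gives only
\[
\sum_{\gamma\in\Gamma(R)} N(\gamma,B)\;\leq\;\frac{\lambda(B')}{c(\epsilon)}\,e^{hR},
\]
and there is no reason the ratio $2t_0\lambda(B')/(c(\epsilon)\lambda(B))$ can be pushed below $1$ by shrinking $t_0$ and $\epsilon$: both $c(\epsilon)$ and $\lambda(B)$ go to zero together, and the quotient does not. What the paper actually does (Lemma~\ref{upperestimate}) is restrict to periods in a narrow window $[R-t_0,R+t_0]$ and observe that the intersection components produced by such periodic points are contained in $\Phi^{R}Z_2\cap Z_3$, whose $\lambda$--mass is controlled by \emph{mixing} of the Lebesgue measure: $\lambda(\Phi^{R}Z_2\cap Z_3)\to\lambda(Z_2)\lambda(Z_3)$. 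It is precisely this extra factor $\lambda(Z_2)$ that cancels the $1/\lambda(Z_1)$ coming from the volume of a single intersection component and yields $H(U,R-t_0,R+t_0)e^{-hR}\leq 2t_0\lambda(U)(1+\epsilon)$. Your proposal never invokes mixing, and the sentence ``the parameters $t_0$ and $\epsilon$ can be adjusted so that $2t_0/c(\epsilon)\leq 1$'' is exactly where the argument breaks.

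There are two further omissions. First, the curve graph does not enter merely to ``discard orbits deep in the cusp''; it is used (Lemmas~\ref{expansion} and \ref{borelregular}) to construct the nested neighbourhoods $K^{ss}\subset C^{ss}$, $K^{su}\subset C^{su}$ whose $F$--images in $\partial{\cal C}(S)$ have prescribed diameters, so that one can verify the inclusion $\Phi^{r}(W^s_{{\cal Q},{\rm loc}}(z)\cap Z_2)\subset Z_3$ needed to compare an intersection component with $\Phi^R Z_2\cap Z_3$. This is how the paper circumvents the lack of uniform hyperbolicity; a vague reference to ``separating periodic orbits according to the behaviour of their shadow in ${\cal C}(S)$'' does not provide this. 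Second, Lemma~\ref{upperestimate} only treats recurrent points with $\vert{\rm Stab}\vert=1$; the paper then runs a separate argument to show that any closed $\Phi^t$--invariant set $A$ with $\lambda(A)=0$ satisfies $\mu(A)=0$, which handles boundary points of the stratum and the non-recurrent locus. Your outline does not address this step at all.
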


This means that $\mu(U)\leq \lambda(U)$ for every
open relative compact subset $U$ of $\overline{\cal Q}$. In particular,
the measure $\mu$ is finite and absolutely continuous with respect to the
Lebesgue measure, and it gives full mass to ${\cal Q}$.

A point $q\in {\cal Q}$ is called \emph{forward recurrent}
(or \emph{backward recurrent}) if it is contained
in its own $\omega$-limit set (or in its own
$\alpha$-limit set) under the action of 
$\Phi^t$. A point $q\in {\cal Q}$ is \emph{recurrent}
if it is forward and backward recurrent. The set ${\cal R}\subset {\cal Q}$
of recurrent points is a $\Phi^t$-invariant Borel subset of ${\cal Q}$.
It follows from the work of Masur \cite{M82}
that a forward recurrent point
$q\in {\cal Q}(S)$ has uniquely ergodic
vertical and horizontal measured geodesic laminations 
whose supports fill up $S$. As a consequence,
the preimage $\tilde {\cal R}$ of 
${\cal R}$ in ${\cal Q}^1(S)$ is
contained in the set $\tilde{\cal A}$ defined in (\ref{tildea}) of 
Section 2.

Using the notations from Section 2, 
there is a number $p>1$ such that
for every $q\in {\cal Q}^1(S)$ the map
$t\to \Upsilon_{\cal T}(P\Phi^tq)$ is an unparametrized
$p$-quasi-geodesic in the curve graph ${\cal C}(S)$. 
If $q$ is a lift of a recurrent point in 
${\cal Q}(S)$ then this unparametrized quasi-geodesic is of infinite 
diameter. 

Recall from (\ref{distance}) of Section 2 the definition of the
distances $\delta_x$ $(x\in {\cal T}(S))$
on $\partial{\cal C}(S)$ and of the sets $D(q,r)\subset \partial{\cal C}(S)$
$(q\in \tilde {\cal A},r>0)$.
The following lemma is a version of
Lemma 2.1 of \cite{H07b}.

\begin{lemma}\label{expansion}
There are numbers $\alpha_0>0,\beta>0,b>0$ 
with the following property. 
Let $q\in \tilde {\cal R}$ and for $s>0$ write
$\sigma(s)=d(\Upsilon_{\cal T}(Pq),
\Upsilon_{\cal T}(P\Phi^sq))$; then   
\[\beta e^{-b\sigma(s)}\delta_{P\Phi^sq} \leq \delta_{Pq}\leq 
\beta^{-1}e^{-b\sigma(s)}\delta_{P\Phi^sq}\text{ on }
D(\Phi^sq,\alpha_0).\]
\end{lemma}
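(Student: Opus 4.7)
The strategy is to reduce the claim to a visual-metric shadow estimate on $\partial{\cal C}(S)$ based at vertices of the curve graph, and then to invoke Gromov hyperbolicity along the unparametrized quasi-geodesic $t\mapsto\Upsilon_{\cal T}(P\Phi^tq)$. Set $c=\Upsilon_{\cal T}(Pq)$ and $a=\Upsilon_{\cal T}(P\Phi^sq)$, so that $\sigma(s)=d(c,a)$ by definition. The second inequality of (\ref{deltacomparison}) gives $\kappa^{-1}\delta_{Pq}\leq \delta_c\leq\kappa\delta_{Pq}$ and the analogous bounds for $a$ and $P\Phi^sq$; thus proving the lemma is equivalent to establishing, for uniform constants $\tilde\beta, b>0$ and some $\alpha_0>0$,
\[\tilde\beta e^{-bd(c,a)}\delta_a\leq \delta_c\leq \tilde\beta^{-1}e^{-bd(c,a)}\delta_a\text{ on }D(\Phi^sq,\alpha_0).\]

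The geometric input is that the map $t\mapsto\Upsilon_{\cal T}(P\Phi^tq)$ is an unparametrized $p$-quasi-geodesic in the hyperbolic graph ${\cal C}(S)$, and since $q\in\tilde{\cal R}\subset\tilde{\cal A}$ it is infinite, with common Gromov-boundary endpoint $F(q)=F(\Phi^sq)$. Thus $c$ and $a$ lie on a $p$-quasi-geodesic ray from $c$ to $\zeta=F(q)\in\partial{\cal C}(S)$ at curve-graph distance $d(c,a)=\sigma(s)$. This is the classical shadow-lemma configuration.

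The main technical step is the hyperbolic comparison on the boundary, proved exactly as in Lemma 2.1 of \cite{H07b}. Choose $\alpha_0>0$ depending only on the hyperbolicity constant of ${\cal C}(S)$, the quasi-geodesic constant $p$, and the exponent $\rho$ appearing in $\delta_c\leq e^{\rho d(c,a)}\delta_a$. The choice is made so that for any $\xi,\eta\in D(\Phi^sq,\alpha_0)$, a geodesic in ${\cal C}(S)$ from $c$ to $\xi$ (respectively to $\eta$) passes within bounded distance of $a$; this uses the Morse lemma to approximate the ray $t\mapsto\Upsilon_{\cal T}(P\Phi^tq)$ by a genuine geodesic and the fact that a $\delta_a$-ball of small radius around $\zeta$ consists of points whose geodesics from $a$ all fellow-travel the ray past $a$. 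From this fellow-traveling one obtains the standard comparison of Gromov products
\[(\xi\mid\eta)_c = (\xi\mid\eta)_a + d(c,a) + O(1),\]
which, after exponentiation and the averaging in (\ref{distance}) over the uniformly bounded-diameter support of $\mu_{Pq}$ and $\mu_{P\Phi^sq}$, yields the desired two-sided comparison between $\delta_c$ and $\delta_a$ on $D(\Phi^sq,\alpha_0)$, with a constant $b>0$ that may differ from $1$ because $\delta_c,\delta_a$ are averaged rather than literal visual metrics.

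The main obstacle is this last step, namely verifying that the averaged distances $\delta_x$ satisfy a genuine two-sided visual comparison and that the estimate is uniform in $q$ and $s$; the one-sided bound $\delta_c\leq e^{\rho d(c,a)}\delta_a$ given in the setup is too coarse. The uniformity comes from $\alpha_0$ being chosen in terms of the hyperbolicity constants alone, together with the fact that the support of each $\mu_x$ has diameter bounded by a universal constant, so that averaging distorts the comparison only by a bounded multiplicative factor absorbed into $\tilde\beta$.
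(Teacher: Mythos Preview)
Your proposal is correct and matches the paper's approach: the paper gives no proof at all, simply stating that the lemma ``is a version of Lemma 2.1 of \cite{H07b}.'' You have correctly identified this reference and sketched the underlying shadow-lemma argument in $\partial{\cal C}(S)$, which is exactly how that lemma is proved.
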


The map $F:\tilde {\cal A}\to 
\partial {\cal C}(S)$ defined in Section 2
is equivariant under the action of the
mapping class group on $\tilde {\cal A}\subset{\cal Q}^1(S)$ and on
$\partial {\cal C}(S)$. In particular,
for $q\in \tilde {\cal A}$ and $r>0$ the set
$D(q,r)\subset \partial {\cal C}(S)$ is invariant 
under ${\rm Stab}(q)$, and the same holds true for 
$F^{-1}D(q,r)$.

Let $\tilde {\cal Q}\subset {\cal Q}^1(S)$ be a component of the preimage of 
${\cal Q}$ and let
${\rm Stab}(\tilde {\cal Q})<{\rm Mod}(S)$ be the stabilizer of 
$\tilde {\cal Q}$ in ${\rm Mod}(S)$.
The $\Phi^t$-invariant Borel probability measure 
$\lambda$ on ${\cal Q}$ in the Lebesgue measure class 
lifts to a ${\rm Stab}(\tilde {\cal Q})$-invariant locally finite measure
on $\tilde {\cal Q}$ which we denote again by
$\lambda$.  The conditional measures $\lambda^{ss},
\lambda^{su}$ of $\lambda$ on the leaves of the 
strong stable and strong unstable foliation of ${\cal Q}$ lift to 
a family of conditional
measures on the leaves of the 
strong stable and strong unstable foliation
$W_{\tilde {\cal Q}}^{ss},W_{\tilde {\cal Q}}^{su}$ of $\tilde {\cal Q}$,  
respectively, which we
denote again by $\lambda^{ss},\lambda^{su}$ (see the discussion in Section 4).

\begin{lemma}\label{borelregular}
For every $\tilde q\in \tilde{\cal Q}\cap \tilde {\cal R}$
and for all 
compact neighborhoods $W_1\subset W_2$ of $\tilde q$ 
in $W^{su}_{\tilde {\cal Q}}(\tilde q)$ 
there are compact  neighborhoods
$K\subset C\subset W_1$ of 
$\tilde q$ in $W_{\tilde{\cal Q}}^{su}(\tilde q)$ 
with the following properties.
\begin{enumerate}
\item $K,C$ are precisely invariant under ${\rm Stab}(\tilde q)$.
\item There are numbers $0<r_1<r_2<\alpha_0/2$ such that
\[K=\overline{W_1\cap F^{-1}D(\tilde q,r_1)},\,
C=\overline{W_1\cap F^{-1}D(\tilde q,r_2)}.\]
\item $\lambda^{su}(K)(1+\epsilon)\geq \lambda^{su}(C)$.
\item If $z\in K\cap \tilde{\cal  A}$ and if 
$Z\subset \overline{F^{-1}D(z,(r_2-r_1)/2)\cap 
W_2}$ then $Z\subset C$.
\end{enumerate}
\end{lemma}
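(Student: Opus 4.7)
The plan is to take $K$ and $C$ as closures in $W_1$ of $F$-preimages of two nested closed balls $D(\tilde q,r_1)\subset D(\tilde q,r_2)$ in $\partial {\cal C}(S)$, and to choose the radii based on a ``preimage is small'' reduction. Since $\tilde q\in \tilde{\cal R}$ is recurrent, Masur's criterion forces its vertical measured geodesic lamination to be uniquely ergodic, so $\tilde q\in \tilde{\cal A}$. A key preliminary observation is that $\tilde q$ is the \emph{unique} preimage of $F(\tilde q)$ in $W^{su}(\tilde q)\cap \tilde{\cal A}$: for any other such $z$ the vertical support of $z$ coincides with that of $\tilde q$, hence by unique ergodicity the vertical measured lamination of $z$ is a scalar multiple of that of $\tilde q$, and the normalization $\iota(\,\cdot\,,\nu)=1$ imposed by the fixed horizontal $\nu$ of $W^{su}(\tilde q)$ forces the scalar to be $1$. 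Combined with Lemma \ref{closed}(1) applied to $A=(W_2\cap \tilde{\cal A})\setminus B^{su}(\tilde q,\epsilon')$, which is closed in $\tilde{\cal A}$ and misses $\tilde q$, this yields for every $\epsilon'>0$ an $r>0$ with
\[\overline{W_2\cap F^{-1}D(\tilde q,r)}\subset B^{su}(\tilde q,\epsilon').\]

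I now choose $\epsilon'>0$ so small that $B^{su}(\tilde q,\epsilon')\subset {\rm int}(W_1)$, that $B^{su}(\tilde q,\epsilon')$ is precisely ${\rm Stab}(\tilde q)$-invariant (valid for $\epsilon'\leq a(q)$ in the notation of Section 4, with $q$ the projection of $\tilde q$ to ${\cal Q}(S)$), and that $e^{\kappa\epsilon'}\leq 2$ for the constant $\kappa$ of (\ref{deltacomparison}). Using the preliminary inclusion I fix $r_2\in (0,\alpha_0/2)$ with $\overline{W_2\cap F^{-1}D(\tilde q,r_2)}\subset B^{su}(\tilde q,\epsilon')\subset W_1$; then $\overline{W_1\cap F^{-1}D(\tilde q,r_2)}$ coincides with $\overline{W_2\cap F^{-1}D(\tilde q,r_2)}$, lies inside the precisely invariant ball, and has positive $\lambda^{su}$-measure (by Lemma \ref{closed}(2), $F^{-1}D(\tilde q,r_2)$ contains $B^{su}(\tilde q,r_0)\cap \tilde{\cal A}$ for some $r_0>0$, and $\tilde{\cal A}$ has full $\lambda^{su}$-measure in $W^{su}(\tilde q)$). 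The function $f(r)=\lambda^{su}(\overline{W_1\cap F^{-1}D(\tilde q,r)})$ is bounded and monotone non-decreasing on $(0,r_2]$, so I may shrink $r_2$ slightly to a continuity point of $f$ (the preliminary inclusion is preserved under shrinking) and then pick a continuity point $r_1<r_2$ close enough to $r_2$ that $f(r_2)\leq (1+\epsilon)f(r_1)$.

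Set $K=\overline{W_1\cap F^{-1}D(\tilde q,r_1)}$ and $C=\overline{W_1\cap F^{-1}D(\tilde q,r_2)}$; properties (2) and (3) hold by construction. For (1), the distance $\delta_{P\tilde q}$ is invariant under ${\rm Stab}(\tilde q)$ because the $\delta_x$ are ${\rm Mod}(S)$-equivariant in $x$ and $P\tilde q$ is fixed, so $F^{-1}D(\tilde q,r)$ is ${\rm Stab}(\tilde q)$-invariant; combined with the preliminary inclusion, $W_1\cap F^{-1}D(\tilde q,r_i)$ coincides with $F^{-1}D(\tilde q,r_i)\cap B^{su}(\tilde q,\epsilon')$, hence $K,C$ are ${\rm Stab}(\tilde q)$-invariant and, being contained in the precisely invariant ball $B^{su}(\tilde q,\epsilon')$, are precisely ${\rm Stab}(\tilde q)$-invariant. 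For (4), take $z\in K\cap \tilde{\cal A}$; since $z\in B^{su}(\tilde q,\epsilon')$ one has $d_{\cal T}(Pz,P\tilde q)\leq d^{su}(z,\tilde q)<\epsilon'$ and $\delta_{P\tilde q}(F(\tilde q),F(z))\leq r_1$. For any $\xi\in D(z,(r_2-r_1)/2)$, the triangle inequality and (\ref{deltacomparison}) give
\[\delta_{P\tilde q}(F(\tilde q),\xi)\leq r_1+e^{\kappa d_{\cal T}(P\tilde q,Pz)}\delta_{Pz}(F(z),\xi)\leq r_1+2\cdot \frac{r_2-r_1}{2}=r_2,\]
so $F^{-1}D(z,(r_2-r_1)/2)\cap W_2\subset F^{-1}D(\tilde q,r_2)\cap W_2\subset W_1$; taking closures yields $Z\subset C$.

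The main obstacle is the ``preimage is small'' inclusion in the first paragraph: it is where closedness of $F$, the uniqueness of preimages of $F(\tilde q)$ in $W^{su}(\tilde q)\cap \tilde{\cal A}$, and the unique ergodicity furnished by recurrence all enter. Once this is in place, the measure selection via monotonicity of $f$ and the triangle-inequality verification of (4) are routine.
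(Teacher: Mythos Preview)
Your argument follows the paper's route: define the candidate sets as closures of $F$-preimages of nested balls $D(\tilde q,r)$, use closedness of $F$ together with unique ergodicity of $\tilde q$ to force these closures into an arbitrarily small precisely invariant ball $B^{su}(\tilde q,\epsilon')$, and then select $r_1<r_2$ by monotonicity of $r\mapsto \lambda^{su}(K_r)$. Your verification of property~(4) via the triangle inequality and the bi-Lipschitz estimate~(\ref{deltacomparison}) is in fact more explicit than the paper's, which simply asserts that all four requirements hold once $r_1,r_2$ are chosen.

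One small slip to fix: the set $A=(W_2\cap\tilde{\cal A})\setminus B^{su}(\tilde q,\epsilon')$ is \emph{not} closed in $\tilde{\cal A}$, because $B^{su}(\tilde q,\epsilon')$ is by definition the \emph{closed} ball, so you are removing a closed set and what remains is relatively open in $W_2\cap\tilde{\cal A}$. Replace $A$ by $(W_2\cap\tilde{\cal A})\cap\{d^{su}(\cdot,\tilde q)\geq \epsilon'\}$; this set is closed in $\tilde{\cal A}$, still avoids $\tilde q$, and your unique-preimage observation still gives $F(\tilde q)\notin F(A)$. The rest of the argument then goes through verbatim, yielding $W_2\cap F^{-1}D(\tilde q,r)\subset\{d^{su}(\cdot,\tilde q)<\epsilon'\}$ and hence $\overline{W_2\cap F^{-1}D(\tilde q,r)}\subset B^{su}(\tilde q,\epsilon')$ after taking closures.
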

\begin{proof} Let $q\in {\cal Q}$  be a recurrent point and let  
$\tilde q\in \tilde {\cal Q}$ be a lift of $q$. 
Let $W_1\subset W_2\subset W_{\tilde {\cal Q}}^{su}(\tilde q)$ 
be compact  neighborhoods of $\tilde q$
and let $r>0$ be such that
$B_{\tilde{\cal Q}}^{su}(\tilde q,2r)\subset 
W_1\subset W_{\tilde {\cal Q}}^{su}(\tilde q)$ 
is precisely invariant under ${\rm Stab}(\tilde q)$ and 
projects to a metric
orbifold ball in $W^{su}_{\cal Q}(q)$.

By Lemma \ref{closed}, the 
map $F:\tilde {\cal A}\to \partial {\cal C}(S)$ 
is continuous and closed, and the sets 
$F(B^{su}(\tilde q,\nu)\cap \tilde {\cal A})$ 
$(\nu>0)$ form a neighborhood basis
of $F\tilde q$ in $\partial{\cal C}(S)$. 
Thus there is a number $u_0>0$ such that
\[D(\tilde q,u_0)\cap F({W_2}
\cap \tilde{\cal A})\subset 
F(B^{su}_{\tilde {\cal Q}}(\tilde q,r)\cap \tilde {\cal A}).\] 

For $u\leq u_0$ let 
$K_u\subset W^{su}_{\tilde {\cal Q}}(\tilde q)$ 
be the closure of the set
\[F^{-1}(D(\tilde q,u))\cap W_2.\]
Then $K_u$ is a closed neighborhood of $\tilde q$ in 
$W_{\tilde {\cal Q}}^{su}(\tilde q)$ which is 
precisely invariant under ${\rm Stab}(\tilde q)$. 
Moreover, 
$K_t\subset K_u$ for $t< u$, and 
Lemma \ref{closed} shows that $\cap_{u>0}K_u=\{\tilde q\}$.
Since the conditional measure
$\lambda^{su}$ on $W^{su}_{\tilde {\cal Q}}(\tilde q)$ is Borel regular, 
for every $\epsilon >0$ 
there are numbers $r_1<r_2<u_0$ so that
\[\lambda^{su}(K_{r_1})\geq \lambda^{su}(K_{r_2})(1+\epsilon)^{-1}.\]
For these number $r_1<r_2$, all requirements in the lemma
hold true. 
This shows the lemma.
\end{proof}

{\bf Remark:} Since $\tilde {\cal A}$ is dense in ${\cal Q}^1(S)$ and
the map $F:\tilde {\cal A}\to \partial {\cal C}(S)$ is continuous
and closed, the sets $K\subset C\subset W^{su}_{\tilde {\cal Q}}(\tilde q)$
have dense interior. Moreover, we may assume that their boundaries
have vanishing Lebesgue measures.

Let again $\tilde {\cal Q}\subset {\cal Q}^1(S)$ be a component of 
the preimage of ${\cal Q}$. 
For $q\in {\cal Q}$ let $\tilde q$ be a preimage of $q$ in 
$\tilde {\cal Q}$ and 
let $\vert {\rm Stab}(q)\vert$ be the cardinality
of the quotient of ${\rm Stab}(\tilde q)$ 
 by the normal subgroup of all elements of 
 ${\rm Stab}(\tilde q)$ which fix $\tilde {\cal Q}$ pointwise
 (for example, the hyperelliptic involution of a closed surface of 
 genus $2$ acts trivially on the entire bundle ${\cal Q}^1(S)$).
We note

\begin{lemma}\label{cals}
The set ${\cal S}=\{q\in {\cal Q}\mid 
\vert {\rm Stab}(q)\vert=1\}$ is an open dense $\Phi^t$-invariant
submanifold of ${\cal Q}$.
\end{lemma}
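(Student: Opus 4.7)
The plan is to verify the three properties separately—flow invariance, openness, and density—and then observe that openness together with triviality of the effective stabilizer gives the submanifold claim essentially for free.

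Flow invariance should be immediate: since $\Phi^t$ commutes with the action of $\mathrm{Mod}(S)$ on $\mathcal{Q}^1(S)$, for any lift $\tilde q$ we have $\mathrm{Stab}(\Phi^t\tilde q)=\mathrm{Stab}(\tilde q)$, and the normal subgroup $N\subset \mathrm{Stab}(\tilde{\mathcal{Q}})$ of elements fixing $\tilde{\mathcal{Q}}$ pointwise is independent of $\tilde q\in\tilde{\mathcal{Q}}$. Hence $|\mathrm{Stab}(q)|=|\mathrm{Stab}(\tilde q)/N|$ is constant along $\Phi^t$-orbits.

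For openness, I would use proper discontinuity of the $\mathrm{Mod}(S)$-action on $\mathcal{Q}^1(S)$. Fix $q\in\mathcal{S}$ and a lift $\tilde q\in\tilde{\mathcal{Q}}$; then $\mathrm{Stab}(\tilde q)=N$ is finite. Proper discontinuity provides an open neighborhood $\tilde U$ of $\tilde q$ in $\tilde{\mathcal{Q}}$ such that $\{\phi\in\mathrm{Mod}(S):\phi \tilde U\cap \tilde U\neq\emptyset\}=\mathrm{Stab}(\tilde q)=N$. For any $z\in\tilde U$, this forces $\mathrm{Stab}(z)\subset N$; the reverse inclusion is automatic since $N$ fixes every point of $\tilde{\mathcal{Q}}$. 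Thus $\mathrm{Stab}(z)=N$ for all $z\in\tilde U$, and the projection of $\tilde U$ is an open neighborhood of $q$ inside $\mathcal{S}$.

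For density, the complement $\mathcal{Q}\setminus\mathcal{S}$ is the projection of $\bigcup_{\phi\in\mathrm{Stab}(\tilde{\mathcal{Q}})\setminus N}\mathrm{Fix}(\phi)\cap\tilde{\mathcal{Q}}$. In period coordinates the action of $\mathrm{Stab}(\tilde{\mathcal{Q}})$ on $\tilde{\mathcal{Q}}$ is (the restriction of) a linear action on relative cohomology, so $\mathrm{Fix}(\phi)\cap\tilde{\mathcal{Q}}$ is the intersection of $\tilde{\mathcal{Q}}$ with a proper linear subspace whenever $\phi\notin N$, and in particular is nowhere dense in $\tilde{\mathcal{Q}}$. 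Since $\mathrm{Mod}(S)$ is countable, $\mathcal{Q}\setminus\mathcal{S}$ is a countable union of closed nowhere dense sets, and Baire category yields density of $\mathcal{S}$. The submanifold property then follows because on a neighborhood $\tilde U$ as above, the quotient map $\tilde{\mathcal{Q}}\to\mathcal{Q}$ is induced by the trivial action of $\mathrm{Stab}(\tilde q)/N=1$, hence is a local homeomorphism onto a smooth piece of $\mathcal{Q}$.

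The only place where some care is required is the nowhere-density of $\mathrm{Fix}(\phi)$ for $\phi\notin N$: one must invoke the linearity (or at least real-analyticity) of the $\mathrm{Mod}(S)$-action in period coordinates to rule out the possibility that $\phi$ fixes a full open subset of $\tilde{\mathcal{Q}}$ without fixing all of it, and one must also know that $\tilde{\mathcal{Q}}$ is connected so that ``fixes an open set'' forces ``fixes everything,'' which is exactly the condition defining $N$. Once this is granted, the rest of the argument is routine.
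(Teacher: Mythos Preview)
Your argument is correct, and your flow-invariance and openness steps are essentially the paper's (the paper says openness is ``clear'' and phrases invariance via the Teichm\"uller geodesic rather than via the commutation $g\Phi^t=\Phi^t g$, but this is the same content).

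The genuine difference is in the density argument. You take a static route: the fixed locus of each $\phi\notin N$ is, in local period coordinates, a proper linear (hence real-analytic, hence nowhere dense) subset, and a countable union of such sets is meagre by Baire. The paper instead takes a dynamical shortcut: since ${\cal S}$ is open and $\Phi^t$-invariant, and since the Teichm\"uller flow on ${\cal Q}$ has a dense orbit (a consequence of ergodicity of $\lambda$, \cite{M82,V86}), any non-empty open invariant set is automatically dense; one then only has to check ${\cal S}\neq\emptyset$, which follows from proper discontinuity. Your approach is more self-contained in that it avoids appealing to the existence of dense orbits, at the cost of having to justify that $\mathrm{Fix}(\phi)$ cannot contain an open set without $\phi\in N$---which, as you note, needs connectedness of $\tilde{\cal Q}$ together with analyticity (or local linearity) of the action. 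The paper's approach trades that verification for a dynamical fact already in hand for the rest of the article.
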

\begin{proof}
The mapping class group preserves the Teichm\"uller metric on
${\cal T}(S)$ and hence an element $h\in {\rm Mod}(S)$ which
stabilizes a quadratic differential $\tilde q\in {\cal Q}^1(S)$ fixes
pointwise the Teichm\"uller geodesic 
with initial cotangent $\tilde q$. Therefore the set ${\cal S}$ is $\Phi^t$-invariant, 
moreover it is clearly open. Since the Teichm\"uller
flow on ${\cal Q}$ has dense orbits, either
${\cal S}$ is empty or dense. However, ${\rm Mod}(S)$
acts properly discontinuously on ${\cal T}(S)$ and consequently the first
possibility is ruled out by the fact that the conjugacy class of an element of ${\rm Mod}(S)$ 
which fixes an entire component of the preimage of ${\cal Q}$ does not contribute
towards $\vert {\rm Stab}(q)\vert$.
\end{proof}

For a control of the measure $\mu$ 
we use a variant of an argument of Margulis \cite{Mar04}.
Namely, for numbers $R_1<R_2$ 
let $\Gamma(R_1,R_2)$ be the set of all periodic orbits of $\Phi^t$
which are contained in ${\cal Q}$, with prime periods
in the interval $(R_1,R_2)$. For 
an open or closed  subset $V$ of
$\overline{\cal Q}$ and numbers $R_1<R_2$ define 
\[H(V,R_1,R_2)=\sum_{\gamma\in \Gamma(R_1,R_2)}
\int_{\gamma}\chi(V)\]
where $\chi(V)$ is the characteristic function of $V$.

To obtain control on the quantities $H(V,R_1,R_2)$ 
we use a tool from \cite{ABEM10}.
Namely, every leaf $W^{ss}(q)$ of the strong stable foliation 
of ${\cal Q}(S)$ can
be equipped with the \emph{Hodge distance} $d_H$ 
(or, rather, the modified Hodge distance, \cite{ABEM10}).
This Hodge distance is defined by a norm on the tangent space
of $W^{ss}(q)$ (with a suitable interpretation). In particular,
closed $d_H$-balls of sufficiently small finite radius are compact,
and balls about a given point $q$ define a neighborhood basis
of $q$ in $W^{ss}(q)$.  
We also obtain a Hodge distance on the leaves of the
strong unstable foliation as the image under the flip ${\cal F}$
of the Hodge
distance on the leaves of the strong stable foliation.
These Hodge distances restrict to Hodge distances on the leaves
of the foliations $W^{ss}_{\cal Q},W^{su}_{\cal Q}$ which we denote
by the same symbol $d_H$.

The following result is Theorem 8.12 of \cite{ABEM10},

\begin{theorem}\label{hodgenorm}
There is a number
$c_H>0$ such that
\begin{equation}\label{hodge}
d_H(\Phi^tq,\Phi^t q^\prime)\leq c_Hd_H(q,q^\prime).
\end{equation}
for all 
$q\in {\cal Q}(S),q^\prime\in W^{ss}(q)$
and all 
$t\geq 0$.
\end{theorem}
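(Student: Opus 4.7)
The plan is to prove the non-expansion by combining the cohomological variational formula of Forni–Kontsevich with the truncation procedure of \cite{ABEM10}. Recall that a tangent vector to $W^{ss}(q)$ at $q$ corresponds via period coordinates to a class in $H^1(S,\Sigma;\mathbb{R})$ (where $\Sigma$ is the set of zeros), and under the Teichm\"uller flow such a class is transported by the Gauss–Manin connection. The (modified) Hodge norm $\|\cdot\|_H$ on this cohomology induces an infinitesimal Finsler structure on the leaves of $W^{ss}$, and the Hodge distance $d_H(q,q')$ is the associated path distance.

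First I would fix $q$ and $q'\in W^{ss}(q)$ and choose a nearly length-minimizing smooth path $\alpha:[0,1]\to W^{ss}(q)$ from $q$ to $q'$, so that $d_H(q,q')$ is approximated by $\int_0^1 \|\dot\alpha(s)\|_H\,ds$. Pushing forward by $\Phi^t$ gives a path $\Phi^t\alpha$ joining $\Phi^t q$ to $\Phi^t q'$ in $W^{ss}(\Phi^t q)$, and the theorem reduces to showing that the infinitesimal Hodge norm of $\dot\alpha(s)$ is not expanded by more than a universal multiplicative constant under the flow, uniformly in $s$ and $t\geq 0$.

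Second, I would apply the Forni–Kontsevich variational formula: for a fixed cohomology class $c$, the derivative $\frac{d}{dt}\log\|c\|_{H,\Phi^t q}$ is controlled by a Hermitian form whose operator norm is at most $1$. Combined with the fact that a strong-stable variation of period coordinates is contracted by $e^{-t}$, one obtains the key infinitesimal estimate that the Hodge length of $\dot\alpha(s)$ at $\Phi^t q$ is bounded by a constant multiple of its Hodge length at $q$. Integrating over $s\in[0,1]$ and taking the infimum over paths yields the desired bound $d_H(\Phi^t q,\Phi^t q')\leq c_H\, d_H(q,q')$.

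The principal difficulty lies in proving this bound uniformly across all of ${\cal Q}(S)$, and in particular in the thin part of moduli space where saddle connections become short and the naive Hodge norm on relative cohomology degenerates. Here one must replace the standard Hodge norm by the modified norm of \cite{ABEM10}, which rescales the contribution of short saddle connections so as to remain comparable to the period norm on the relevant subspaces uniformly up to the boundary of strata. Verifying that the Forni-type estimates persist for this modified norm, with a constant $c_H$ independent of how deeply one is in the cusp, is the main technical obstacle and occupies most of the work.
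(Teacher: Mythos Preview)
The paper does not prove this statement at all: it simply quotes it as Theorem~8.12 of \cite{ABEM10} and uses it as a black box. So there is no proof in the paper to compare your proposal against.

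Your sketch is a reasonable summary of the strategy actually carried out in \cite{ABEM10}: reduce to an infinitesimal statement, use the Forni variational formula to bound the logarithmic derivative of the Hodge norm along the flow, combine this with the $e^{-t}$-contraction of period coordinates in the strong stable direction, and then handle the thin part via the modified Hodge norm. Two points are worth flagging. First, the leaf $W^{ss}(q)$ in ${\cal Q}(S)$ is not contained in a single stratum, so the description of tangent vectors via period coordinates in $H^1(S,\Sigma;\mathbb{R})$ is only local and the relative part requires care; this is precisely why \cite{ABEM10} introduces the modified norm, which treats the absolute part via the genuine Hodge norm and the relative (saddle-connection) part separately. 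Second, the Forni formula bounds the growth of the \emph{absolute} Hodge norm; getting the relative part under control uniformly in the thin part is not a matter of checking that ``Forni-type estimates persist'' but rather a separate argument involving the geometry of short saddle connections. Your last paragraph correctly identifies this as the main difficulty, but the resolution is more than a routine verification and constitutes the bulk of the cited result.
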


The next lemma
provides some first volume control for the measure $\mu$.

\begin{lemma}\label{upperestimate}
For every recurrent point $q\in {\cal Q}$ with
$\vert {\rm Stab}(\tilde q)\vert =1$, for 
every neighborhood $V$ of $q$ in ${\cal Q}$ and 
for every $\epsilon >0$ there is 
a number $t_0>0$ and there is an  
open neighborhood $U\subset V$ of $q$ such that
\[\lim\sup_{R\to \infty}H(U,R-t_0,R+t_0)e^{-hR}\leq
2t_0\lambda(U)(1+\epsilon).\]
\end{lemma}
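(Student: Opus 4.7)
The plan is to follow Margulis' flow-box approach to periodic orbit counting, with the boundary of the curve graph playing the role of a uniform expansion coordinate on strong unstable leaves. First I would use the local product structure on $\mathcal Q$ from Section 4.1 to pick, inside $V$, a small flow box neighborhood $U = V(W_1, W_2, t_0)$ of $q$. Since $|\mathrm{Stab}(\tilde q)| = 1$, the preimage $\tilde U \subset \tilde{\mathcal Q}$ is a genuine product $\tilde W_1 \times \tilde W_2 \times [-t_0, t_0]$ with no identifications, and by shrinking $U$ I may arrange, using the continuous Radon--Nikodym factor $e^\phi$ of Section 4, that
\begin{equation*}
\lambda(U) \;\geq\; (1+\epsilon)^{-1/2} \cdot 2t_0 \cdot \lambda^{ss}(W_1)\, \lambda^{su}(W_2).
\end{equation*}
Recurrence of $q$ ensures $\tilde q \in \tilde{\mathcal A}$, so the maps $F$, the distances $\delta_x$, and Lemmas \ref{closed}, \ref{expansion} and \ref{borelregular} are all available near $\tilde q$.

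Next I would split $H(U, R - t_0, R + t_0)$ through the transversal $\Sigma = W_1 \times W_2$. Any periodic orbit $\gamma$ of period $\ell(\gamma) \in (R-t_0, R+t_0)$ that enters $U$ crosses $\Sigma$ in a finite set of points, and each connected component of $\gamma \cap U$ has length exactly $2t_0$, so
\begin{equation*}
H(U, R - t_0, R + t_0) \;=\; 2t_0 \cdot N(R), \qquad N(R) := \sum_{\gamma \in \Gamma(R - t_0, R + t_0)} |\gamma \cap \Sigma|.
\end{equation*}
The task thus reduces to proving $\limsup_{R} N(R) e^{-hR} \leq (1+\epsilon)^{1/2} \cdot 2t_0 \cdot \lambda^{ss}(W_1)\, \lambda^{su}(W_2)$, since multiplying by $2t_0$ and combining with the product formula for $\lambda(U)$ gives the stated bound.

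The core of the argument is a separation estimate for these crossing points in $\Sigma$. For each $y \in \gamma \cap \Sigma$, $y$ is a fixed point of $\Phi^{\ell(\gamma)}$, and $\Phi^{\ell(\gamma)}$ expands $\lambda^{su}$ along $W^{su}_{\tilde{\mathcal Q}}(y)$ by the factor $e^{h\ell(\gamma)}$; showing that distinct crossing points coming from orbits with periods in $(R-t_0,R+t_0)$ are $\lambda^{su}$-separated on the order of $e^{-hR}$ would yield the desired counting bound. The main obstacle is that the Teichm\"uller flow is only non-uniformly hyperbolic: the pointwise expansion of $\Phi^\tau$ on $W^{su}$ in a Finsler metric may drop far below $e^{h\tau}$ in the thin part of moduli space, and a direct packing argument fails there. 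To bypass this I would transport the separation problem from $W^{su}$ to the Gromov boundary $\partial \mathcal C(S)$ via $F$. Recurrence of $\tilde q$ forces the curve graph drift $\sigma(\tau) = d(\Upsilon_{\mathcal T}(P\tilde q), \Upsilon_{\mathcal T}(P\Phi^\tau \tilde q))$ to tend to infinity with $\tau$, and Lemma \ref{expansion} then gives $\delta_{P \tilde q} \leq \beta^{-1} e^{-b\sigma(\tau)} \delta_{P\Phi^\tau \tilde q}$ on $D(\Phi^\tau y, \alpha_0)$; through Lemma \ref{borelregular} this uniform $\delta$-separation of the distinct $F(y_i)$ converts to the required $\lambda^{su}$-separation of the $y_i$ in $\Sigma$.

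Putting the pieces together, for $R$ sufficiently large the number of distinct $y \in \Sigma$ admitting some $\tau \in (R-t_0, R+t_0)$ with $\Phi^\tau(y) = y$ is bounded by $(1+\epsilon)^{1/2} \cdot 2t_0 \cdot e^{hR} \cdot \lambda^{ss}(W_1)\, \lambda^{su}(W_2)$, and the product formula for $\lambda(U)$ then yields the claimed $\limsup$. The principal technical difficulty throughout is the conversion from non-uniform hyperbolicity to a uniform counting bound, and the curve graph boundary framework of Section 2, together with Lemmas \ref{expansion} and \ref{borelregular}, exists precisely to trade a poor Finsler expansion statement on $W^{su}$ for the uniformly-behaved distances $\delta_x$ on $\partial \mathcal C(S)$.
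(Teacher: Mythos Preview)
Your overall shape (Margulis flow-box counting, with the curve-graph boundary substituting for uniform hyperbolicity) matches the paper's, but the central step of your argument does not close, and one essential ingredient is missing.

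The gap is in the separation-to-counting step. Suppose you succeed in showing that distinct periodic crossing points of $\Sigma$ are $\lambda^{su}$-separated on the scale $e^{-hR}$. A packing argument then yields at best $N(R)\,e^{-hR}\lesssim \lambda^{su}(W_2)$, \emph{not} $N(R)\,e^{-hR}\lesssim 2t_0\,\lambda^{ss}(W_1)\,\lambda^{su}(W_2)$. The stable factor $\lambda^{ss}(W_1)$ and the flow-time factor $2t_0$ simply do not appear from an unstable-leaf separation bound; as $W_1$ and $t_0$ shrink, your estimate does not improve, whereas the lemma requires it to. In Margulis' scheme the sharp constant $\lambda(U)$ comes from \emph{mixing} of $\lambda$, which your outline never invokes. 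The paper's proof associates to each periodic point $z\in Z_1$ of period $r\in(R-t_0,R+t_0)$ the full-dimensional return component $E\subset \Phi^r Z_2\cap Z_3$, shows $\lambda(E)\asymp e^{-hR}\lambda(Z_1)$, checks these components are pairwise disjoint, and then uses mixing to get $\lambda(\Phi^R Z_2\cap Z_3)\leq (1+\epsilon)\lambda(Z_2)\lambda(Z_3)\approx\lambda(Z_1)^2$; dividing gives the correct bound with $\lambda(Z_1)$ on the right.

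Two further points. First, Lemmas~\ref{expansion} and~\ref{borelregular} do not convert $\delta$-separation on $\partial\mathcal C(S)$ into $\lambda^{su}$-separation of size $e^{-hR}$: the contraction rate in Lemma~\ref{expansion} is $e^{-b\sigma(s)}$ with $\sigma(s)$ the curve-graph displacement, which is not comparable to $hR$. In the paper these lemmas play a different role: they produce nested sets $K^{su}\subset C^{su}$ (and $K^{ss}\subset C^{ss}$) with $\lambda^{su}(C^{su})\leq(1+\epsilon)\lambda^{su}(K^{su})$ such that the return image of $C^{su}$ lands back inside $C^{su}$; this is what pins down $\lambda(E)$. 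Second, you never mention the Hodge norm. The paper uses Theorem~\ref{hodgenorm} (uniform non-expansion of $d_H$ along $W^{ss}$ under forward flow) to guarantee that $\Phi^t(V_1\cap W^s_{\mathcal Q,\mathrm{loc}}(z))$ stays inside $V_0$; the curve-graph boundary controls only the unstable side, and something independent is needed for the stable side.
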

\begin{proof}
We use the strategy of the proof of Lemma 6.1 of \cite{Mar04}. 
The idea is to find for every 
recurrent point $q\in {\cal Q}$ with $\vert {\rm Stab}(\tilde q)\vert =1$,
for every neighborhood $V$ of $q$ in ${\cal Q}$  
and for every $\epsilon \in (0,1)$ some number $t_0>0$ and closed 
neighborhoods 
$Z_1\subset Z_2\subset Z_3\subset V_0\subset V$ of $q$ in ${\cal Q}$ 
with dense interior such that
for all sufficiently large $R>0$ the following properties hold.
\begin{enumerate}
\item $V_0$ is connected and has a local product structure.
\item $\lambda(Z_3)\leq
\lambda(Z_1)(1+\epsilon)$. 
\item Let $z\in Z_1$ and assume that $\Phi^\tau z=z$ for some
$\tau\in (R-t_0,R+t_0)$. Let $\hat E$ be the 
component containing $z$ of the 
intersection $\Phi^\tau V_0\cap V_0$ and let 
$E=\hat E\cap \Phi^\tau Z_2\cap Z_3$. Then 
\[\lambda(E)\in  [e^{-hR}\lambda(Z_1)/(1+\epsilon),
e^{-hR}\lambda(Z_1)(1+\epsilon)],\] and the length
of the connected orbit subsegment 
of $(\cup_{t\in \mathbb{R}}\Phi^tz)\cap Z_1$
containing $z$ equals $2t_0$.
\item There is at most one  
periodic orbit for $\Phi^t$ 
of prime period $\sigma\in (R-t_0,R+t_0)$ which intersects $E$,
and the intersection of this orbit with $E$ is connected.
 \end{enumerate}

The construction is as follows.

Let $q\in {\cal Q}$ 
be recurrent with $\vert {\rm Stab}(\tilde q)\vert=1$ 
and let $V$ be a
neighborhood of $q$ in ${\cal Q}$. Using the notations
from Subsection 4.1,  
for $\epsilon >0$ there are numbers $a_0<a_{\cal Q}(q),
t_0<\min\{t_{\cal Q}(q)/4(1+\epsilon),\log(1+\epsilon)/h\}$
such that 
\[V_0=V(B_{{\cal Q}}^{ss}(q,a_0),
B^{su}_{{\cal Q}}(q,a_0),t_0(1+\epsilon))\subset V\]
is a set with a local product structure.

Let $\tilde q\in {\cal Q}^1(S)$ be a preimage of $q$.
By construction (see the discussion in Section 4.1), the
set \[\tilde V_0=V(B^{ss}_{\cal Q}(\tilde q,a_0),
B_{\cal Q}^{su}(\tilde q,a_0),t_0(1+\epsilon))\] is 
precisely invariant under ${\rm Stab}(\tilde q)$.
In particular,  since $\vert {\rm Stab}(\tilde q)\vert =1$, the
set $\tilde V_0$ is mapped homeomorphically onto 
$V_0$ by the projection $\tilde {\cal Q}\to {\cal Q}$.

Since periodic orbits for $\Phi^t$ are in bijection
with conjugacy classes of pseudo-Anosov elements of 
${\rm Mod}(S)$, up to making $a_0$ smaller
we may assume that  the following holds true.
For every $r> 8t_0$, every component of the intersection
$\Phi^rV_0\cap V_0$ is intersected by at most one
periodic orbit for the Teichm\"uller flow 
with prime period contained in the interval $[r-2t_0,r+2t_0]$, and if 
such an orbit exists then its intersection with $\Phi^rV_0\cap V_0$ is 
connected.

As in (\ref{theta}) of Section 4, for $z\in V_0$ let
$\theta_z:B^{ss}_{\cal Q}(q,a_0)\to W^{ss}_{{\cal Q},{\rm loc}}(z)$ 
be defined by the requirement that $\theta_z(u)\in W^u_{{\cal Q},{\rm loc}}(u)$
for all $u$. Similarly, as in (\ref{zeta}) of Section 4, let 
$\zeta_z:B^{su}_{\cal Q}(q,a_0)\to W^{su}_{{\cal Q},{\rm loc}}(z)$
be defined by  $\zeta_z(u)\in W^s_{{\cal Q},{\rm loc}}(u)$.
We claim that for sufficiently small $a_1<a_0$ and
for every 
\[z\in V_1=V(B^{ss}_{\cal Q}(q,a_1),B^{su}_{\cal Q}(q,a_1),t_0)\]
the following holds true.
\begin{enumerate}
\item[a)] The Jacobian of the embedding 
 $\theta_z:B^{ss}_{\cal Q}(q,a_1)\to W^{ss}_{{\cal Q},{\rm loc}}(z)$ and
of the embedding  
$\zeta_z:B^{su}_{\cal Q}(q,a_1)\to W^{su}_{{\cal Q},{\rm loc}}(z)$ 
with respect to the measures $\lambda^{ss}$ and 
$\lambda^{su}$, respectively, 
is contained in the interval $[(1+\epsilon)^{-1},1+\epsilon]$.
\item[b)] The restriction to $V_1$ of the 
function $\sigma$ defined in (\ref{sigma}) 
takes values in the interval
$[-(\log(1+\epsilon)/h),(\log(1+\epsilon))/h]$.
\item[c)] If  $z\in V(B^{ss}_{\cal Q}(q,a_1),B^{su}_{\cal Q}(q,a_1))$ and 
if $t>8t_0$ is such that 
\[\Phi^tz\in V(B^{ss}_{\cal Q}(q,a_1),B^{su}_{\cal Q}(q,a_1))\] 
then $\Phi^t(V_1\cap W^s_{{\cal Q},{\rm loc}}(z))\subset
V_0$ and $\Phi^{-t}(V_1\cap W^u_{{\cal Q},{\rm loc}}(z))\subset V_0$.
\end{enumerate}
Here and in the sequel, for $z\in V_1$ we denote by 
$V_1\cap W^s_{{\cal Q},{\rm loc}}(z)$ the connected component
containing $z$ of the intersection $V_1\cap W^s_{\cal Q}(z)$.

To verify the claim, note 
first that property b) can be fulfilled since $\sigma$ is continuous 
and $\Phi^t$-invariant and equals one at $q$.
Property a) is fulfilled for sufficiently small $a_1$ since the 
measures $\lambda^s$ (or $\lambda^u$) are invariant under
holonomy along the strong unstable (or the strong unstable) foliation
and since $d\lambda^{s}=d\lambda^{ss}\times dt$ and 
$d\lambda^u=d\lambda^{su}\times dt$ and hence Jacobians
of the maps $\theta_z,\zeta_z$ are controlled by the function $\sigma$.

By Property b) above and by Theorem \ref{hodgenorm},
the last property is fulfilled if 
we choose $a_1>0$ small enough so that for some $r>0$ and 
every $z\in V_1$ 
the following is satisfied. For every $u\in V_1$ 
the diameter of $\theta_u(B^{ss}_{\cal Q}(q,a_1))$
with respect to the Hodge distance 
does not exceed $r$, and 
the Hodge distance between $\theta_u(B^{ss}_{\cal Q}(q,a_1))$ and
the boundary of $\theta_u(B^{ss}_{\cal Q}(q,a_0))$
is not smaller than $c_Hr$.

Since $h\geq 1$, 
Property b) implies the following. For all closed
sets $A^{i}\subset B^{i}_{\cal Q}(q,a_1)$ $(i=ss,su)$ and for every $z\in 
V(B^{ss}_{\cal Q}(q,a_1),B^{su}_{\cal Q}(q,a_1))$ we have
\begin{align}\label{exactcontrol}
%Y(K^{ss},K^{su},t_0)
% &\subset V(K^{ss},K^{su},t_0(1+\epsilon))\text{ and }\\ 
V(A^{ss},A^{su},t_0(1+\epsilon)^{-1})\subset 
V(\theta_z(A^{ss}),\zeta_z(A^{su}),t_0) \\
\subset V(A^{ss},A^{su},t_0(1+\epsilon)).\notag \end{align}
Moreover, we have
\begin{equation}\label{productestimate}
\lambda(V(\theta_z(A^{ss}),\zeta_z(A^{su}),t_0))/2t_0\lambda^{ss}(A^{ss})
\lambda^{su}(A^{su})\in [(1+\epsilon)^{-4},(1+\epsilon)^4].
\end{equation}

By the estimate (\ref{deltacomparison}) in 
Section 2, there is a number 
$\kappa>0$ such that for any two points $u,x\in {\cal T}(S)$ with
$d_{\cal T}(u,x)\leq 1$ the distances $\delta_u,\delta_x$ on 
$\partial {\cal C}(S)$ are $e^\kappa$-bilipschitz equivalent. 
 
Let
$\tilde{\cal Q}$ be a component of the preimage
of ${\cal Q}$ in ${\cal Q}^1(S)$.
Let $\tilde q\in \tilde{\cal Q}$ be a lift of $q$.
Choose
closed neighborhoods $K^{ss}\subset C^{ss}\subset 
B^{ss}_{\tilde {\cal Q}}(\tilde q,a_1)
\subset B^{ss}_{\tilde {\cal Q}}(\tilde q,a_0)$ of $\tilde q$ 
whose images under the flip ${\cal F}$ satisfy the properties 
in Lemma \ref{borelregular} for some numbers $0<r_1<r_2<\alpha_0/2e^\kappa$
where $\alpha_0>0$ is as in Lemma \ref{expansion}.
Choose also closed neighborhoods $\tilde K^{su}\subset 
\tilde C^{su}\subset 
B^{su}_{\tilde {\cal Q}}(\tilde q,a_1)
\subset B^{su}_{\tilde {\cal Q}}(\tilde q,a_0)$ of $\tilde q$
with the properties in Lemma \ref{borelregular} for some 
numbers $0<\tilde r_1<\tilde r_2<\alpha_0/2\kappa$.
By the choice of  the set $V_0$, 
for any two points  
$u,z\in V(C^{ss},\tilde C^{su},t_0(1+\epsilon))$ the 
distances $\delta_{Pu}$ and $\delta_{Pz}$ are
$e^\kappa$-bilipschitz equivalent. 
As a consequence,
for all $u\in V(C^{ss},\tilde C^{su},t_0(1+\epsilon))$ 
the $\delta_{Pu}$-diameter of $F({\cal F}C^{ss}\cap {\cal A})$ and
$F(\tilde C^{su}\cap {\cal A})$ does not exceed
$\alpha_0/2$. Let 
\[\rho_0\in (0,\min\{(r_2-r_1)/2,(\tilde r_2-\tilde r_1)/2\}).\]

By assumption, $q$ is recurrent and hence by Lemma \ref{expansion},
applied to both $\tilde q$ and $-\tilde q={\cal F}(\tilde q)$,  
there is a number $R_0>0$
so that for every $R\geq R_0$ and for every 
$z\in B^{su}_{\tilde {\cal Q}}(\tilde q,a_1)$ with 
$d_{\cal T}(P\Phi^Rz,P\Phi^R\tilde q) \leq 1$ we have
\begin{align}\label{lemmaupper}
\delta_{P\Phi^R z}\leq \rho_0 \delta_{Pz}/\alpha_0\text{ on }
F({\cal F}C^{ss}\cap \tilde{\cal A})  \text{ and }\\
\delta_{P\Phi^R z}\geq \alpha_0\delta_{Pz}/
\rho_0 \text{ on }
D(\Phi^R\tilde q,\alpha_0).
\notag\end{align} 
Moreover, there is a mapping class $h\in {\rm Stab}(\tilde {\cal Q})$ and
a number $R_1>R_0$ 
such that 
$\Phi^{R_1}\tilde q$ is an
interior point of $hV(K^{ss},\tilde K^{su})$.

By equivariance under the action of the mapping class group, 
for every $u\in hV(C^{ss},\tilde C^{su})$ the $\delta_{Pu}$-diameter
of $F(hV(C^{ss},\tilde C^{su})\cap \tilde {\cal A})$ 
is smaller than $\alpha_0/2$. In particular, the 
$\delta_{P\Phi^{R_1}\tilde q}$-diameter of $F(h\tilde C^{su}\cap 
\tilde{\cal A})$ is smaller than $\alpha_0/2$. 
The second part of inequality (\ref{lemmaupper}) then implies 
that the $\delta_{P\tilde q}$-diameter of 
$F(h\tilde C^{su}\cap \tilde{\cal A})$ does not exceed 
$\rho_0$. Thus  by Property c) above, 
by the choice of $\rho_0$ and by
Lemma \ref{borelregular}, 
we have
\[F(h\tilde C^{su}\cap \tilde {\cal A})\subset 
F(\tilde C^{su}\cap \tilde {\cal A}).\]
 
Define 
\begin{align} 
K^{su} &=\overline{\{x\in W^{su}_{\tilde {\cal Q},{\rm loc}}(\tilde q)
\cap \tilde {\cal A}\mid
F(x)\in F(h\tilde K^{su}\cap \tilde {\cal A})\}}\text{  and }\notag\\
C^{su} &=\overline{\{x\in W^{su}_{\tilde {\cal Q},{\rm loc}}(\tilde q)
\cap \tilde {\cal A}\mid 
F(x)\in F(h\tilde C^{su}\cap \tilde {\cal A})\}}.\notag
\end{align}
Then $\tilde q$ is an interior point of $K^{su}$ (as a subset
of $W^{su}_{\tilde {\cal Q},{\rm loc}}(\tilde q)$), and 
$K^{su},C^{su}$ are precisely invariant under ${\rm Stab}(\tilde q)$
(since a non-trivial element of ${\rm Stab}(\tilde q)$ fixes
$\tilde {\cal Q}$ pointwise).

The conditional measures $\lambda^{su}$ are
invariant under holonomy along the 
strong stable foliation and transform under the 
Teichm\"uller flow by $\lambda^{su}\circ \Phi^t=e^{ht}\lambda^{su}$.
Moreover, $\lambda^{su}(\tilde K^{su})\geq
\lambda^{su}(\tilde C^{su})(1+\epsilon)^{-1}$ and hence
properties a) and b) above and the definition of the
function $\sigma$ imply that 
\[\lambda^{su}(K^{su})\geq \lambda^{su}(C^{su})(1+\epsilon)^{-3}.\]

Define 
\begin{equation}
\tilde Z_1=V(K^{ss},K^{su},t_0),\,
 \tilde Z_2=V(K^{ss},C^{su},t_0),\,
\tilde Z_3=V(C^{ss},C^{su},t_0(1+\epsilon))\notag \end{equation}
and let $Z_i$ be the projection of $\tilde Z_i$ to ${\cal Q}$. 
Note that we have $Z_1\subset Z_2\subset Z_3$ and 
\[\lambda(Z_1)\geq \lambda(Z_3)(1+\epsilon)^{-8}\] by the 
choice of $K^{ss},C^{ss}$, by the estimate in a) above, by
invariance of $\lambda$ under the flow $\Phi^t$
(which implies that $\lambda(\tilde Z_3)\leq 
\lambda V(C^{ss},C^{su},t_0)(1+\epsilon)^2$) and 
by the fact that $\tilde Z_i$ is mapped homeomorphically onto $Z_i$
for $i=1,2,3$. Moreover, each of the sets $Z_i$ is closed with
dense interior.

Let $R>R_1+t_0$ and 
let $z\in Z_1$ be a periodic point for $\Phi^t$ of period 
$r\in [R-t_0,R+t_0]$. Since every orbit of $\Phi^t$ which intersects
$Z_1$ also intersects $V(K^{ss},K^{su})$ we may assume that
$z\in V(K^{ss},K^{su})$.
Let $\hat E$ be the component containing $z$ of the
intersection $\Phi^rV_0\cap V_0$ and let 
\[E=\hat E\cap \Phi^rZ_2\cap Z_3
\subset Z_3.\] 
We claim that 
\begin{equation}\label{lambdaestimate}
\lambda(E)\in 
[e^{-hr}\lambda(Z_1)
(1+\epsilon)^{-10},e^{-hr}\lambda(Z_1)
(1+\epsilon)^{11}].
\end{equation}

To see that this is indeed the case, let $\tilde z\in \tilde Z_1$
be a lift of $z$. By the choice of the set $C^{su}$ and 
by the first part of the estimate (\ref{lemmaupper}), the
$\delta_{P\Phi^R\tilde z}$-diameter of the set
$F({\cal F}
\Phi^RC^{ss}\cap \tilde {\cal A})$
does not exceed $\rho_0$. In particular, since $z\in Z_1$ 
and Property c) above holds true, we have 
\begin{equation}\label{contained}
\Phi^{r}(W^{s}_{{\cal Q},{\rm loc}}(z)\cap Z_2)\subset E\end{equation}
and similarly
\begin{equation}\label{contain}
\Phi^{-r}(W^u_{{\cal Q},{\rm loc}}(z)\cap Z_1)\subset E.\end{equation}

Let $D\subset C^{ss}$ be such that 
\[\theta_zD=
\Phi^r(W^s_{{\cal Q},{\rm loc}}(z)\cap Z_2)\cap \theta_zC^{ss}.\]
Then by
the estimate (\ref{exactcontrol}) and by
(\ref{contain}), we have 
\[Q_1=V(\theta_z(D),\zeta_z(K^{su}),t_0(1+\epsilon)^{-1})\subset E\subset
V(\theta_z(D),\zeta_z(C^{su}),t_0(1+\epsilon))=Q_2.\]
Now by the 
estimate (\ref{productestimate} and the fact that $\Phi^r$ 
preserves the stable foliation and contracts the 
measures $\lambda^{s}$ by the factor $e^{-hr}$,
we conclude that \[\lambda(Q_1)\geq 
e^{-hr}\lambda^{ss}(K^{ss})\lambda^{su}(K^{su})/2t_0(1+\epsilon)^{6}\]
and similarly 
\[\lambda(Q_2)
\leq 
e^{-hr}\lambda^{ss}(K^{ss})\lambda^{su}(C^{su})(1+\epsilon)^6/2t_0.\]
Together with the estimate (\ref{productestimate}) this 
implies the estimate (\ref{lambdaestimate}).

Since $r\in [R-t_0,R+t_0]$ and $e^{ht_0}\leq 1+\epsilon$
we conclude that the measure of the set
$E$ is at least
\[e^{-hR}\lambda(Z_1)(1+\epsilon)^{-12}.\]
Moreover, the Lebesgue measure of the component containing $z$ 
of the orbit segment $\{\Phi^tz\mid -t_0<t<t_0\}\cap Z_2$ 
equals $2t_0$.

On the other hand,
if $z\not=z^\prime\in Z_0$ are periodic
points of prime periods $r,s\in [R-t_0,R+t_0]$ then 
by our choice of $V_0$ 
the components containing $z,z^\prime$ of the intersection
$\Phi^rV_0\cap V_0$
are disjoint. Thus there are at most 
\[\lambda(\Phi^RZ_2\cap Z_3)e^{hR}(1+\epsilon)^{12} /\lambda(Z_1)\]
such intersection arcs which are subarcs of 
periodic orbits
of prime period in $[R-t_0,R+t_0]$. However, since the Lebesgue 
measure $\lambda$ is mixing for the Teichm\"uller flow
\cite{M82,V86}, for sufficiently large $R$ we have
\[\lambda(\Phi^RZ_2\cap Z_3)\leq 
\lambda(Z_2)\lambda(Z_3)(1+\epsilon)\leq \lambda(Z_1)^2(1+\epsilon)^{17}.\]
From this we deduce that
\[H(Z_1,R-t_0,R+t_0)e^{-hR}\leq {2t_0\lambda(Z_1)}(1+\epsilon)^{29}\] 
for all sufficiently large $R>0$.
This shows the lemma.
\end{proof}

Now we are ready for the proof of Proposition \ref{absolute}.

{\it Proof of Proposition \ref{absolute}.} 
Let $\mu$ be a weak limit of the measures $\mu_R$ as
$R\to \infty$. Then $\mu$ is a (a priori
locally infinite) $\Phi^t$-invariant
Borel measure supported in the closure 
$\overline{\cal Q}$ of ${\cal Q}$. This measure is moreover invariant
under the flip ${\cal F}:q\to -q$.

By Lemma \ref{upperestimate} it suffices to show the following. 
Let $A\subset \overline{\cal Q}$ be a closed
$\Phi^t$-invariant set of vanishing Lebesgue measure. 
Then for all $\epsilon>0$, every $q\in A$ has a neighborhood
$U$ in $\overline{\cal Q}$ such that
$\mu(A\cap U)<\epsilon$.

First let $q\in A\cap {\cal Q}$. Choose compact 
balls $B^i\subset C^i\subset W^i_{{\cal Q},{\rm loc}}(q)$
about $q$ for the Hodge distance
of radius $r_1>0,r_2>2c_Hr_1>0$ 
$(i=ss,su)$ and numbers
$t_0>0,\delta >0$ such that
$V_3=V(C^{ss},C^{su},t_0(1+\delta))$ is a set
with a local product structure.
In particular, for every preimage $\tilde q$ of $q$ in
${\cal Q}^1(S)$ the component of the preimage
of $V_3$ containing $\tilde q$ is
precisely invariant under ${\rm Stab}(\tilde q)$. 
Then 
\[V_0=V(B^{ss},B^{su},t_0(1-\delta))\subset 
V(C^{ss},C^{su},t_0(1+\delta))=V_3\]
are closed neighborhoods of $q$
in ${\cal Q}$. Let moreover
\[V_1=V(B^{ss},B^{su},t_0)\subset V_2=V(B^{ss},C^{su},t_0).\]
We may assume that for one (and hence every) component 
$\tilde V_3$ of the
preimage of $V_3$ in ${\cal Q}^1(S)$ the diameter of the projection 
$P\tilde V_3$ of $\tilde V_3$ to ${\cal T}(S)$ does not exceed one.
As in the proof of Lemma \ref{upperestimate} we require that
moreover the following holds true. 

$(*)$  If $z\in V(B^{ss},B^{su})$ and if 
$t>8t_0$ is such that $\Phi^tz\in V(B^{ss},B^{su})$ then 
$\Phi^t(V_1\cap W^s_{{\cal Q},{\rm loc}}(z))\subset
V_3$ and moreover
$\Phi^{-t}(V_0\cap W^u_{{\cal Q},{\rm loc}}(z))\subset
V_2$.

That this requirement can be met follows from Theorem \ref{hodgenorm}
and the discussion in the proof of Lemma \ref{upperestimate}.

If $q\in \overline{\cal Q}-{\cal Q}$ then we choose 
closed neighborhoods 
\[V_0=\cup_i V(B_i^{ss},B_i^{su},t_0(1-\delta))\subset
V_3=\cup_iV(C_i^{ss},C_i^{su},t_0(1+\delta))\] of $q$ in 
$\overline{\cal Q}$ as in Proposition \ref{forwardnondiv}
such that $\cup_iB_i^j$ and $\cup_iC_i^{j}$ are the intersections
with $W_{{\cal Q},{\rm loc}}^{j}(q)$ of closed balls for the Hodge norm.
We require that property $(*)$ above holds true
(with a slight abuse of notation).

Let $u\in V_1$ and let
$r>0$ be such that $\Phi^ru=u$. 
Let $Y$ be the connected component containing $u$ of the
intersection $V_3\cap \Phi^r(V_2)$.
By the property $(*)$, we have 
$Y\supset  \Phi^r(V_1\cap W^{s}_{{\cal Q},{\rm loc}}(u))$. 
Moreover, the connected component containing $u$
of the intersection $V_3\cap \Phi^r(V_2\cap W^{u}_{{\cal Q},{\rm loc}}(u))$ 
contains the component containing $u$ of the intersection
$W^{u}_{\tilde {\cal Q},{\rm loc}}(u)\cap V_0$.
Thus
as in the proof of Lemma \ref{upperestimate}, we observe that
for any point $u\in V_0$ and every $r>0$ 
such that $\Phi^ru=u$
the Lebesgue measure of the intersection
$\Phi^rV_2\cap V_3$ is bounded from 
below by $e^{-hr}\chi$ where $\chi>0$ is a fixed
constant which only depends on $V_1,V_2,V_3$. 
Moreover, the number of periodic
points $z\in V_1$
of period $s\in [r-t_0,r+t_0]$ such that the
intersection components $\Phi^rV_2\cap V_3,\Phi^sV_2\cap V_3$ 
containing
$u,z$ are \emph{not} disjoint is bounded
from above by the cardinality of ${\rm Stab}(\tilde q)$ where
$\tilde q$ is a preimage of $q$ in ${\cal Q}^1(S)$..

For $q,z\in \overline{\cal Q}$ and $t>0$ write $q\approx_t z$ if
there are lifts $\tilde q,\tilde z$ of $q,z$ to ${\cal Q}^1(S)$ such that
\[d(P\Phi^s\tilde q,P\Phi^s\tilde z)< 1\text{ for }0\leq s\leq t.\]
Write moreover $q\sim_uz$ if there are lifts
$\tilde q,\tilde z$ of $q,z$ to ${\cal Q}^1(S)$ such that
\[d(\tilde q,\tilde z)< 1,d(P\Phi^u\tilde q,P\Phi^u\tilde z)< 1.\]
Note that if $y\approx_tz$ then also $y\sim_t z$. 
For a subset $D$ of $\overline{\cal Q}$ define
\begin{align}
U_t(D) &=\{z\mid z\approx_t y\text{ for some }y\in D\}\text{ and }\notag\\
Y_u(D)&=\{z\mid z\sim_u y\text{ for some }y\in D\}.\end{align}
Then $U_t(D)$ and $Y_u(D)$ are open neighborhoods of $D$.

For $j>0$ define 
\begin{align}
Z_{j}&=U_{j}(A\cap V_1)\cap V_1\text{ and }\\
W_{j,k}&=Y_k(Z_j)\cap V_1.\end{align}
Then for all $k>0,j>0$ 
each $j>0$, $Z_j$ is an open neighborhood of $A\cap V_1$ in 
$V_1$, and $W_{j,k}$ is an open neighborhood of 
$Z_j$ in $A\cap V_1$. Moreover, we have
$Z_{j}\supset Z_{j+1}$ for all $j$ and $\cap_jZ_{j}\supset A\cap V_1$.
If $z\in \cap_jZ_j-A$ then there is some $y\in A$ and there are
lifts $\tilde z,\tilde y$ of $z,y$ to ${\cal Q}^1(S)$ such that
$d(P\Phi^t(\tilde z),P\Phi^t(\tilde y))\leq 1$ for all $t\geq 0$.
However, up to removing from $\cap_jZ_j$ 
a set of vanishing Lebesgue measure,
this implies that $z\in W^{ss}_{{\cal Q},{\rm loc}}(y)$ \cite{M82,V86}. 
But $\lambda(A)=0$ and therefore
$\lambda(\cap_jZ_j)=\lambda(A\cap V_1)=0$ by absolute continuity.  
Since $\lambda$ is Borel regular,
the Lebesgue measures
of the sets $Z_{j}$ tend to zero as $j\to \infty$.

Similarly, we infer that
$\lambda(Z_j)=\lim\sup_{k\to \infty}\lambda(W_{j,k})$.
Thus 
for every $\kappa >0$ there are numbers $j_0=j_0(\kappa)>0$ and 
$k_0=k_0(\kappa)>j_0$ such that we have
$\lambda(W_{j,k})<\kappa$ for all $j\geq j_0,k\geq k_0$.
  
Now let $R>k_0+2\epsilon$ and let 
$w\in V_1\cap Z_{j_0}$ be a periodic point for $\Phi^t$ of 
prime period $r\in [R-\epsilon,R+\epsilon]$. Let $Z$ be the component
of $\Phi^rV_2\cap V_3$ containing $w$. Then 
every point in $Z$ is contained in $W_{j_0,R}$. 
By Lemma \ref{upperestimate} and its proof,
the Lebesgue
measure of this intersection component is bounded from
below by $\chi e^{-hR}$ where
$\chi>0$ is as above.
Moreover, the number of periodic points
$u\not=z$ for which these
intersection components are not
disjoint is uniformly bounded. 
In particular, there is a number $\beta>0$
not depending on $R,j_0$ such that
the number of these intersection
components is bounded from above by $\beta e^{hR}$ times the Lebesgue
measure of $W_{j_0,R}$, i.e. by $e^{hR}\beta\kappa$. This implies that 
we have $\mu(Z_{j_0})\leq \beta\kappa/2t_0$.
Since $\kappa>0$ was arbitrary we conclude that
$\mu(A\cap V_1)=0$.
Proposition \ref{absolute} follows.

\section{Proof of the theorem}

In this section we complete the proof of the theorem from
the introduction.
We continue to use the assumptions
and notations from Sections 2-5.

As before, let ${\cal Q}\subset {\cal Q}(S)$ 
be a component of a stratum, equipped
with the $\Phi^t$-invariant Lebesgue measure $\lambda$. 
Let ${\cal S}\subset {\cal Q}$ 
be the open dense $\Phi^t$-invariant subset of full Lebesgue measure of all
points $q$ with $\vert {\rm Stab}(q)\vert =1$. 
Then ${\cal S}$ is a manifold.

Let $q\in {\cal S}$ and let $U\subset {\cal S}$ be an open relative compact
contractible neighborhood of $q$.
For $n>0$ define
a \emph{periodic $(U,n)$-pseudo-orbit} for the
Teichm\"uller flow $\Phi^t$ on ${\cal Q}$
to consist of a point $x\in U$ and a number
$t\in [n,\infty)$ such that $\Phi^{t}x\in U$.
We denote such a periodic pseudo-orbit by $(x,t)$.
A periodic $(U,n)$-pseudo-orbit $(x,t)$ determines up to homotopy a
closed curve begin\-ning and en\-ding at $x$
which we call a \emph{characteristic curve} (compare
Section 4 of \cite{H07b}). This
characteristic curve is the concatenation of 
the orbit segment $\{\Phi^s x\mid 0\leq s\leq t\}$ 
with a smooth arc in $U$
which is parametrized on $[0,1]$ and
connects
the endpoint $\Phi^{t}x$ of the orbit segment 
with the starting point $x$.

Recall from Section 5 the definition of 
a recurrent point for the Teichm\"uller flow
on ${\cal Q}$.
Lemma 4.4 of \cite{H07b} shows

\begin{lemma}\label{quasigeodesic} There is a number 
$L>0$ and 
for every recurrent 
point $q\in {\cal S}$ there is 
an open relative compact contractible 
neighborhood $V$ of $q$ in ${\cal S}$ and there 
is a number $n_0>0$ depending on $V$
with the following property. Let $(x,t_0)$
be a periodic $(V,n_0)$-pseudo-orbit and
let $\gamma$ be a lift to ${\cal Q}^1(S)$ of a characteristic
curve of the pseudo-orbit. Then the curve
$t\to \Upsilon_{\cal T}(P\gamma(t))$ is an infinite unparametrized
$L$-quasi-geodesic in ${\cal C}(S)$.
\end{lemma}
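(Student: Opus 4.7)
The plan is to reduce the claim to a local-to-global criterion for unparametrized quasi-geodesics in the Gromov hyperbolic graph ${\cal C}(S)$. The lift $\gamma:[0,\infty)\to {\cal Q}^1(S)$ of the characteristic curve is, up to a deck transformation acting by iteration, a concatenation indexed by $k\geq 0$ of Teichm\"uller orbit segments $\sigma_k$ of length $t_0$ joined by short closing arcs $\alpha_k$ of parameter length $1$. Each $\Upsilon_{\cal T}\circ P\circ \sigma_k$ is an unparametrized $p$-quasi-geodesic in ${\cal C}(S)$ by the Masur--Minsky result quoted just before (\ref{distortion}), so the task is to show that the concatenation is globally an $L$-quasi-geodesic for a uniform $L$.

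First I would control the closing arcs: since $V$ is relatively compact in ${\cal S}$, every lift of $\alpha_k$ is contained in a bounded region of Teichm\"uller space, so the coarse Lipschitz estimate $d(\Upsilon_{\cal T}(x),\Upsilon_{\cal T}(y))\leq c\,d_{\cal T}(x,y)+c^2$ dual to (\ref{distortion}) yields a constant $D=D(V)$ with
\[d(\Upsilon_{\cal T}(P\alpha_k(0)),\Upsilon_{\cal T}(P\alpha_k(1)))\leq D\]
for every $k$.

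Second, and this is the heart of the argument, I would use the recurrence of $q$ together with the boundary map $F:\tilde{\cal A}\to\partial{\cal C}(S)$ from Lemma \ref{closed} to secure a uniform lower bound
\[d(\Upsilon_{\cal T}(P\sigma_k(0)),\Upsilon_{\cal T}(P\sigma_k(t_0)))\geq D_1\]
with $D_1$ as large as we like, by shrinking $V$ and enlarging $n_0$ appropriately. Any preimage $\tilde q\in \tilde {\cal R}$ of $q$ lies in $\tilde{\cal A}$, and $\Upsilon_{\cal T}(P\Phi^t\tilde q)\to F(\tilde q)\in\partial{\cal C}(S)$ along an infinite unparametrized quasi-geodesic as $t\to\infty$. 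If the desired lower bound failed, one could extract a sequence of periodic pseudo-orbits $(x_i,t_i)$ with $x_i\to q$, $t_i\to\infty$, and with $d(\Upsilon_{\cal T}(Px_i),\Upsilon_{\cal T}(P\Phi^{t_i}x_i))$ bounded. Combining continuity and closedness of $F$ on $\tilde{\cal A}$ with compactness of ${\cal P\cal M\cal L}$, this would contradict the uniqueness of the boundary endpoint $F(\tilde q)$ and the fact that the forward orbit of $\tilde q$ escapes every bounded subset of ${\cal C}(S)$.

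Finally, I would invoke the standard local-to-global principle for unparametrized quasi-geodesics in hyperbolic spaces: given the hyperbolicity constant of ${\cal C}(S)$ and the constants $p$ and $D$, there is a threshold $D_1^\ast$ such that whenever each segment $\Upsilon_{\cal T}\circ P\circ \sigma_k$ is an unparametrized $p$-quasi-geodesic of curve graph displacement at least $D_1\geq D_1^\ast$, and successive segments are glued by arcs of curve graph length at most $D$, the entire concatenation is an unparametrized $L$-quasi-geodesic for a uniform $L=L(p,D,D_1^\ast)$. Infiniteness of the image of $\Upsilon_{\cal T}\circ P\circ \gamma$ is then automatic because each cycle of the periodic pseudo-orbit advances the curve graph image by at least $D_1-2D>0$, and $\gamma$ traverses infinitely many cycles.

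The main obstacle is the second step: the point $x\in V$ need not remain close to $q$ along the orbit segment $[0,t_0]$, so one cannot directly transfer the known unbounded curve graph progress of $q$ to $x$. The argument must proceed by contradiction, using the compactness of measured lamination space together with the continuity and closedness of $F$ established in Lemma \ref{closed} to rule out the possibility of long returns to an arbitrarily small neighborhood of $q$ that produce only bounded motion in ${\cal C}(S)$.
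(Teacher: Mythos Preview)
The paper does not prove this lemma at all: it simply quotes Lemma 4.4 of \cite{H07b} and remarks that the argument there, stated for ${\cal Q}(S)$, carries over to a component of a stratum. So there is no in-paper proof to compare yours against; what follows is an assessment of your sketch on its own merits.

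Your overall architecture is the right one and is presumably close to what \cite{H07b} does: decompose the lift of the characteristic curve into Teichm\"uller flow segments $\sigma_k$ joined by short closing arcs $\alpha_k$; use that each $\Upsilon_{\cal T}\circ P\circ\sigma_k$ is an unparametrized $p$-quasi-geodesic; bound the curve-graph jump across each $\alpha_k$ by coarse Lipschitzness of $\Upsilon_{\cal T}$; and then feed a uniform lower bound on the curve-graph displacement of each $\sigma_k$ into the local-to-global principle for unparametrized quasi-geodesics in a $\delta$-hyperbolic space. Steps one, three, and the final concatenation step are unproblematic.

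The gap is in your second step, and you have correctly located it yourself. Your proposed contradiction argument via the boundary map $F$ and compactness of ${\cal P\cal M\cal L}$ is too vague to go through as written: the points $x_i$ of your hypothetical sequence need not lie in $\tilde{\cal A}$, so $F$ is not defined there, and nothing you have said pins down the behaviour of the long orbit segment $\Phi^{[0,t_i]}x_i$ from its endpoints alone. A cleaner route, using only tools already recalled in the paper, is as follows. The curve-graph displacement function $\beta(z,t)=d(\Upsilon_{\cal T}(P\tilde z),\Upsilon_{\cal T}(P\Phi^t\tilde z))$ is almost monotone in $t$: by Lemma 2.4 of \cite{H10a} (quoted verbatim in the proof of Lemma \ref{asymptotic}) there is $r>0$ with $\beta(z,t)\geq\beta(z,s)-r$ for all $t\geq s\geq 0$. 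Since $q$ is recurrent, the unparametrized quasi-geodesic $t\mapsto\Upsilon_{\cal T}(P\Phi^t\tilde q)$ has infinite diameter (as noted just before the statement of Lemma \ref{quasigeodesic}), and almost-monotonicity then forces $\beta(q,s)\to\infty$. Given $D_1$, pick $s$ with $\beta(q,s)\geq D_1+r+1$; by continuity of $\Phi^s$ and coarse continuity of $\Upsilon_{\cal T}$ there is a neighbourhood $V$ of $q$ with $\beta(x,s)\geq D_1+r$ for every $x\in V$; set $n_0=s$. Then every $(V,n_0)$-pseudo-orbit segment has $\beta(x,t_0)\geq\beta(x,s)-r\geq D_1$. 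This replaces your compactness/$F$ argument and closes the gap.
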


{\bf Remark:} Lemma 4.4 of \cite{H07b} is formulated for
${\cal Q}(S)$ rather than for a component of a stratum. 
However, the statement and its proof immediately carry over to 
the result formulated in 
Lemma \ref{quasigeodesic}.

\bigskip

For a point $q\in {\cal Q}$ 
choose a preimage 
$\tilde q\in {\cal Q}^1(S)$ of 
$q$ and for $t>0$ define 
\[\beta(q,t)=d(\Upsilon_{\cal T}(P\tilde q),
\Upsilon_{\cal T}(P\Phi^t\tilde q)).\]
Note that $\beta(q,t)$ depends on the choice
of the map $\Upsilon_{\cal T}$ (and on the choice of the lift
$\tilde q$). However, by Lemma 3.3 of \cite{H10a}, there is a 
\emph{continuous} function $\tilde \beta:
{\cal Q}\times [0,\infty)\to \mathbb{R}$ and a number $a>0$ 
such that
$\vert \tilde \beta(q,t)-\beta(q,t)\vert \leq a$
for all $(q,t)$. 
In particular, the values 
$\lim\inf_{t\to \infty}\frac{1}{t}\beta(q,t)$ and 
$\lim\sup_{t\to\infty}\frac{1}{t}\beta(q,t)$ are independent of 
any choices made and coincide with
the corresponding values for $\tilde \beta$.
We use this observation to show

\begin{lemma}\label{asymptotic}
There is a number $c>0$ such that for 
$\lambda$-almost every $q\in {\cal Q}$ we have
\[\lim_{t\to \infty}\frac{1}{t}\beta(q,t)=c.\] 
\end{lemma}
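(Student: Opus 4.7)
The plan is to apply Kingman's subadditive ergodic theorem. Choosing for each $q\in{\cal Q}$ a lift $\tilde q\in{\cal Q}^1(S)$ and taking $\Phi^s\tilde q$ as the lift of $\Phi^sq$, the triangle inequality in ${\cal C}(S)$ combined with the ${\rm Mod}(S)$-equivariance of $\Upsilon_{\cal T}$ gives the subadditive cocycle identity
\[\beta(q,s+t)\le\beta(q,s)+\beta(\Phi^sq,t)\qquad(q\in{\cal Q},\ s,t\ge0).\]
The distortion estimate \eqref{distortion} bounds $\beta(q,t)\le c_0t+c_0^2$ for a universal $c_0>0$, so $\beta(\cdot,t)\in L^1(\lambda)$, and joint measurability of $\beta$ is inherited from its continuous surrogate $\tilde\beta$. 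Since the Lebesgue measure $\lambda$ on ${\cal Q}$ is $\Phi^t$-ergodic by \cite{M82,V86}, Kingman's subadditive ergodic theorem produces a constant
\[c=\inf_{t>0}\frac{1}{t}\int\beta(q,t)\,d\lambda(q)\in[0,c_0]\]
with $\lim_{t\to\infty}\beta(q,t)/t=c$ for $\lambda$-a.e.\ $q\in{\cal Q}$.

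The main obstacle will be verifying that $c>0$, since Kingman alone produces an infimum with no a priori positive lower bound. For this I would exploit the hyperbolicity of ${\cal C}(S)$ together with the fact from \cite{MM99} that $t\mapsto\Upsilon_{\cal T}(P\Phi^t\tilde q)$ is an unparametrized $p$-quasi-geodesic with uniform $p$. In a $\delta$-hyperbolic space the Morse stability lemma implies that any unparametrized $p$-quasi-geodesic $\gamma$ satisfies a reverse subadditivity $d(\gamma(r),\gamma(t))\ge d(\gamma(r),\gamma(s))+d(\gamma(s),\gamma(t))-K$ for $r<s<t$ with a constant $K=K(p,\delta)$. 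This gives
\[\beta(q,s+t)\ge\beta(q,s)+\beta(\Phi^sq,t)-K,\]
so $\beta+K$ is superadditive, and the superadditive form of Kingman's theorem provides the matching identity $c=\sup_{t>0}\bigl(\tfrac{1}{t}\int\beta(q,t)\,d\lambda-K/t\bigr)$.

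To force $c>0$, I would then pick a pseudo-Anosov $\phi\in{\rm Stab}(\tilde{\cal Q})$, whose axis projects to a periodic orbit in ${\cal Q}$ (such elements exist by the density of pseudo-Anosov periodic orbits in ${\cal Q}$). Its curve graph translation length $\tau_{\cal C}(\phi)$ is strictly positive \cite{MM99}, so $\beta$ grows at positive linear rate $\tau_{\cal C}(\phi)/\ell(\phi)$ along the axis. By continuity of $\tilde\beta$ and the local product structure of Section 4, for some time $t_0>0$ a sufficiently small open neighborhood $E$ of a point on this axis has positive Lebesgue measure and satisfies $\beta(\cdot,t_0)\ge\alpha>0$ uniformly on $E$. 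Taking $t_0$ large enough that $K/t_0$ is negligible and using $\int\beta(\cdot,t_0)\,d\lambda\ge\alpha\lambda(E)$ in the superadditive identity forces $c\ge\alpha\lambda(E)/(2t_0)>0$, completing the argument.
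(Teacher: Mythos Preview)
Your reduction to Kingman's subadditive ergodic theorem is correct and matches the paper's approach; the paper works with the continuous surrogate $\tilde\beta$ and obtains subadditivity up to an additive constant $3a$, which is equivalent to what you write. Your observation that the unparametrized $p$-quasi-geodesic property yields a reverse subadditivity $\beta(q,s+t)\ge\beta(q,s)+\beta(\Phi^sq,t)-K$ is also correct and is implicitly behind the paper's argument as well.

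The gap is in your final step for $c>0$. From the superadditive form of Kingman you correctly deduce $c\ge\frac{1}{t_0}\int\beta(\cdot,t_0)\,d\lambda-K/t_0$, and you bound the integral below by $\alpha\lambda(E)$. But for this to give $c>0$ you need $\alpha\lambda(E)>K$, and your argument does not secure this: $K$ is a fixed constant depending only on $p$ and the hyperbolicity constant of ${\cal C}(S)$, while $\lambda(E)\le 1$. Making $t_0$ large lets $\alpha\sim(\tau_{\cal C}(\phi)/\ell(\phi))\,t_0$ grow, but the neighborhood $E=E(t_0)$ on which $\beta(\cdot,t_0)\ge\alpha$ may shrink as $t_0$ increases (continuity of $\tilde\beta$ gives you such a neighborhood for each fixed $t_0$, not a uniform one), so you have no control over the product $\alpha\lambda(E)$. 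The phrase ``taking $t_0$ large enough that $K/t_0$ is negligible'' does not help, because $\alpha\lambda(E)/t_0$ may be equally negligible.

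The paper closes this gap with a Birkhoff recurrence argument rather than a single-time estimate. One first fixes $T$ so large that $\tilde\beta(\cdot,T)\ge 3r+3a$ on a neighborhood $U$ of a periodic point, where $r$ is the coarse-monotonicity constant from \cite{H10a}; the point is that the threshold $3r+3a$ is chosen to dominate the additive error incurred at each application of coarse additivity. Then for a $\lambda$-typical $z$, Birkhoff's theorem guarantees that the number of $i\le n$ with $\Phi^{Ti}z\in U$ grows like $\lambda(U)\,n$, and coarse additivity along the unparametrized quasi-geodesic lets one accumulate a net gain of at least $r$ at \emph{each} such visit, yielding $\tilde\beta(z,nT)\ge r\lambda(U)\,n$ up to lower order, hence $c\ge r\lambda(U)/T>0$. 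Your reverse subadditivity is exactly the ingredient needed for this accumulation; what is missing is the iteration over many returns to $U$ rather than a single pass through $E$.
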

\begin{proof}
It suffices to show the lemma for the continuous function
$\tilde \beta$. 

By the choice of $a>0$ and 
by the triangle inequality, we have
\[\tilde\beta(q,s+t)\leq \tilde \beta(q,s)+\tilde\beta(\Phi^sq,t)+3a\] for 
all $q\in {\cal Q},s,t\in \mathbb{R}$. 
Therefore the subadditive ergodic theorem shows that for 
$\lambda$-almost all $q\in {\cal Q}$ the limit
$\lim_{t\to\infty}\frac{1}{t}\tilde\beta(q,t) $ exists and 
is independent of $q$.
We are left with showing that this limit is positive.

By Lemma 2.4 of \cite{H10a}, there is a number
$r>0$ such that for every $z\in {\cal Q}^1(S)$ and all 
$t\geq s\geq 0$ we have
\[d(\Upsilon_{\cal T}(Pz),\Upsilon_{\cal T}(P\Phi^tz))\geq
d(\Upsilon_{\cal T}(Pz),\Upsilon_{\cal T}(P\Phi^sz))-r.\] 
Let $q\in {\cal Q}$ be a periodic
point for $\Phi^t$. 
Then there is a number $b>0$ such that 
for every lift $\tilde q$ of $q$ to ${\cal Q}^1(S)$ the 
map $t\to \Upsilon_{\cal T}(P\Phi^t\tilde q)$ is a 
biinfinite $b$-quasi-geodesic in ${\cal C}(S)$ \cite{H10a}.
Thus by inequality (\ref{distortion}) and
continuity of $\Phi^t$ we can find an
open neighborhood $U\subset {\cal Q}$ of $q$ and a number $T>0$
such that 
\[\tilde\beta(u,T)\geq 3r+3a \text{ for all } u\in U.\]
Now if $z\in {\cal Q}$ and if $n>k>0$ are such 
that the cardinality of the set of 
all numbers $i\leq n$ with $\Phi^{Ti}z\in U$ is not smaller than $k$ then 
$\tilde\beta(z,nT)\geq kr$.

The measure $\lambda$ is $\Phi^T$-invariant
and ergodic, and $\lambda(U)>0$. Thus  
by the Birkhoff ergodic
theorem,  
the proportion of time 
a typical orbit for the map $\Phi^T$ spends in $U$ 
is positive. 
The lemma follows.
\end{proof}

The next proposition is the main remaining 
step in the proof of the theorem from the introduction.

\begin{proposition}\label{lowerestimate}
For every recurrent point $q\in {\cal S}$, for every
neighborhood $V$ of $q$ in ${\cal S}$ 
and for every $\epsilon >0$ there is 
an open neighborhood $U\subset V$ of $q$ in ${\cal S}$
and a number $t_0>0$ such that
\[\lim\inf_{R\to\infty}H(U,R-t_0-\epsilon,R+t_0+\epsilon)e^{-hR}
\geq 2t_0\lambda(U)(1-\epsilon).\]
\end{proposition}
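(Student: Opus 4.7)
The strategy is Margulis's counting argument in the version adapted to the Teichm\"uller flow, mirroring Lemma~\ref{upperestimate} in reverse. The three ingredients are: a local product box at the recurrent point $q$ with Jacobian control; mixing of $\lambda$ under $\Phi^t$, which gives a \emph{lower} bound on $\lambda(\Phi^R U \cap U)$; and a closing lemma saying that each local product component of $\Phi^R U \cap U$ comes from a true periodic orbit of period close to $R$.

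First I would choose a small flow box $U = V(K^{ss},K^{su},t_0) \subset V$ at $q$, together with slightly larger nested boxes $V(K^{ss},C^{su},t_0) \subset V(C^{ss},C^{su},t_0(1+\epsilon))$, exactly as in the proof of Lemma~\ref{upperestimate}. The choices are made so that holonomy Jacobians lie in $(1\pm\epsilon)$, the shear $\sigma$ is uniformly small, and (by Theorem~\ref{hodgenorm} and Property (c) in that proof) long returns of plaques stay in the enlarged box. Because $q \in {\cal S}$, the stabilizer of a lift of $q$ is trivial, so the box lifts homeomorphically; the exact control from estimates (\ref{exactcontrol}) and (\ref{productestimate}) implies that a periodic orbit of prime period $r \in [R-t_0-\epsilon, R+t_0+\epsilon]$ meeting $V(K^{ss},K^{su})$ produces a disjoint shadow ``strip'' in $\Phi^R V(K^{ss},C^{su},t_0) \cap V(C^{ss},C^{su},t_0(1+\epsilon))$ of $\lambda$-measure within $(1\pm C\epsilon)$ of $2t_0 e^{-hR}\lambda^{ss}(K^{ss})\lambda^{su}(K^{su})$, and contributes exactly $2t_0$ to $H(U, R-t_0-\epsilon, R+t_0+\epsilon)$. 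By mixing of $\lambda$ \cite{M82,V86},
\[\lambda(\Phi^R V(K^{ss},K^{su}) \cap V(K^{ss},K^{su})) \geq \lambda(V(K^{ss},K^{su}))^2(1-\epsilon)\]
for all sufficiently large $R$.

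The crux, and the main obstacle, is to prove that each connected component of $\Phi^R V(K^{ss},K^{su}) \cap V(K^{ss},K^{su})$ is a shadow of some genuine periodic orbit. For a pseudo-orbit $(x,r)$ with $x,\Phi^r x \in V(K^{ss},K^{su})$ and $r \in [R-t_0-\epsilon, R+t_0+\epsilon]$, Lemma~\ref{quasigeodesic} shows that a lift of its characteristic curve projects to an infinite unparametrized $L$-quasi-geodesic in the curve graph ${\cal C}(S)$, and the positive drift of Lemma~\ref{asymptotic} guarantees that this quasi-geodesic has diameter on the order of $cr$. Iterating the pseudo-orbit and invoking Gromov hyperbolicity of ${\cal C}(S)$ produces a pair of endpoints in $\partial{\cal C}(S)$; via Lemma~\ref{closed} and the train-track coordinates of Section~4 (Propositions~\ref{structure}, \ref{all}, \ref{forwardnondiv}) these endpoints correspond to uniquely ergodic projective measured laminations which jointly fill up $S$ and so determine the vertical and horizontal data of a pseudo-Anosov mapping class whose axis descends to a periodic orbit $\gamma_{(x,r)} \subset {\cal Q}$ of prime period in $[R-t_0-\epsilon, R+t_0+\epsilon]$. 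The expansion estimates of Lemma~\ref{expansion} together with the train-track description near the boundary points imply that $\gamma_{(x,r)}$ passes through the enlarged box and, by triviality of the stabilizer at $q$, is determined uniquely (not merely up to bounded multiplicity) by the component of intersection.

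Combining, the closing lemma gives
\[N(R) \cdot 2t_0 e^{-hR}\lambda^{ss}(K^{ss})\lambda^{su}(K^{su})(1+C\epsilon) \geq \lambda(V(K^{ss},K^{su}))^2(1-\epsilon),\]
where $N(R)$ is the number of periodic orbits with prime period in $[R-t_0-\epsilon,R+t_0+\epsilon]$ that meet $V(K^{ss},K^{su})$. Using $\lambda(U) = 2t_0\lambda^{ss}(K^{ss})\lambda^{su}(K^{su})(1+O(\epsilon))$ together with the fact that each counted orbit contributes exactly $2t_0$ to $H(U,R-t_0-\epsilon,R+t_0+\epsilon)$, this rearranges to the claimed lower bound after absorbing all multiplicative errors into $(1-\epsilon)$ by choosing $K^{ss}$, $K^{su}$, $t_0$ and the mixing tolerance sufficiently small.
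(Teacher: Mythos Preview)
Your outline matches the paper's approach: Margulis-style counting, with a product box at $q$, mixing of $\lambda$ for the lower bound on intersection measure, and a closing argument via the curve graph using Lemmas~\ref{quasigeodesic} and~\ref{asymptotic}. The box construction and the measure estimates you describe are exactly what the paper does.

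The gap is in the closing step. Two specific points are not adequately handled.

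First, you use Lemma~\ref{asymptotic} only to say the quasi-geodesic has linear diameter. The paper uses it more precisely: combined with Lemma~\ref{expansion}, it produces a Borel set $Z_0\subset Z_1$ of nearly full measure and a uniform time $T$ such that for every $z\in\tilde Z_0$ and every $t\geq T$ the contraction $\delta_{Pz}\leq (\rho/e^\kappa)\delta_{P\Phi^t z}$ holds on $D(\Phi^t z,\alpha_0)$. Mixing is then applied to $Z_0$, not to $Z_1$. Without this uniform good set the expansion estimate is only asymptotic and you cannot control where the fixed points of $g$ land.

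Second, and this is the crux, you must show that the closed orbit actually lies in the stratum ${\cal Q}$ (not merely in ${\cal Q}(S)$) and passes through the enlarged box $Z_3$. Your appeal to the train-track coordinates of Section~4 does not do this; Propositions~\ref{structure}--\ref{forwardnondiv} are not invoked here in the paper at all. What the paper does is an explicit induction on iterates of the pseudo-orbit showing $\delta_{P\tilde\gamma(0)}(F\tilde\gamma(0),F\tilde\gamma(iR+i))<\rho$ for all $i$, using the uniform contraction above together with equivariance. This traps the attracting fixed point of $g$ in $D(\tilde\gamma(0),\rho)$ and in the closure of $F(W^{su}_{\tilde{\cal Q}}(\tilde q)\cap\tilde{\cal A})$; since $F$ is closed (Lemma~\ref{closed}), the $g$-invariant axis lies in the closure of $\tilde{\cal Q}$, and property~(4) of the nested boxes then forces it through $\tilde Z_3$, hence into ${\cal Q}$. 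Your sketch produces a pseudo-Anosov $g$ but does not explain why its axis sees the correct stratum or the box; the induction on boundary distances is the missing idea.
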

\begin{proof}
Let $q\in {\cal S}$ be recurrent  and
let $V$ be an open neighborhood of 
$q$ which satisfies the conclusion of Lemma \ref{quasigeodesic}
for some $n_0>0$. Let $\epsilon >0$. 
With the notations from Section 4, let 
$a_0<a_{\cal Q}(q),t_0<\min\{t_{\cal Q}(q),\log(1+\epsilon)/2h,\epsilon/4\}$ 
be such that $V_0=
V(B^{ss}_{{\cal Q}}(q,a_0),
B^{su}_{{\cal Q}}(q,a_0),t_0)\subset V$.
Choose a number $a_1<a_0$ which is sufficiently small that for
every $z\in V_1=V(B^{ss}_{\cal Q}(q,a_1),B^{su}_{\cal Q}(q,a_1),t_0)$ the
Jacobian at $z$ of the homeomorphism
\[V(B^{ss}_{\cal Q}(q,a_1),B^{su}_{\cal Q}(q,a_1),t_0)\to 
B^{ss}_{\cal Q}(q,a_1)\times B^{su}_{\cal Q}(q,a_1)\times [-t_0,t_0]\]
with respect to the measures $\lambda$ and $\lambda^{ss}\times 
\lambda^{su}\times dt$ is contained in the interval
$[(1+\epsilon)^{-1}, (1+\epsilon)]$. 
We may assume that any two points in a component  
$\tilde V_1$ of the preimage of $V_1$ 
can be connected in $\tilde V_1$ by a smooth curve whose
projection to ${\cal T}(S)$ is of length at most 
$\epsilon/2$.

Let $\alpha_0>0$ be as in Lemma \ref{expansion}. 
Let $\tilde q$ be a lift of $q$ to a component $\tilde {\cal Q}$
of the preimage of ${\cal Q}$ in ${\cal Q}^1(S)$.
Recall from Section 2
the definition of the map $F:\tilde {\cal A}\to
\partial{\cal C}(S)$.
Since $q$ is recurrent, the horizontal
and the vertical measured geodesic laminations of $\tilde q$ are
uniquely ergodic \cite{M82}. 
Let 
\[Z_1\subset Z_2\subset Z_3\subset V_1\] 
be neighborhoods of $q$ as in the proof of 
Lemma \ref{upperestimate} and let $\tilde Z_1\subset 
\tilde Z_2\subset \tilde Z_3\subset \tilde V_1$ be components of 
lifts of $Z_1\subset Z_2\subset Z_3\subset V_1$ 
to $\tilde {\cal Q}$ which contain $\tilde q$.  
These sets have the following property.
\begin{enumerate}
\item There are closed sets $K^i\subset C^i\subset W^i_{\cal Q}(q)$ with 
dense interior $(i=ss,su)$ 
and there are numbers $t_0>0,\delta >0$ such
$Z_1=V(K^{ss},K^{su},t_0),Z_2=V(K^{ss},C^{su},t_0),
Z_3=V(C^{ss},C^{su},t_0(1+\delta))$. 
\item For every $u\in \tilde Z_3$ 
the $\delta_{Pu}$-diameter of $F(\tilde Z_3\cap \tilde {\cal A})$ 
and of $F({\cal F}\tilde Z_3\cap \tilde {\cal A})$ 
is not bigger than $\alpha_0$.
\item $\lambda(Z_3)\leq \lambda(Z_1)(1+\epsilon)$.
\item There is a number $\rho>0$ with 
the following property. If $z\in \tilde Z_1$ and if 
$C\subset B^{su}_{\tilde {\cal Q}}(z,a_1)$
(or $C\subset B^{ss}_{\tilde {\cal Q}}(z,a_1)$)
is an open neighborhood of $z$  
such that the $\delta_{Pz}$-diameter
of $F(C\cap \tilde {\cal A})$ 
(or of $F({\cal F}(C)\cap \tilde {\cal A})$)
is not bigger than $\rho$ 
then $C\subset \tilde Z_3$ and the $\Phi^t$-orbit of every point of $C$
intersects
$\tilde Z_3$ in an arc of length $2t_0$.   
\end{enumerate}

Let $\Pi:\tilde {\cal Q}\to {\cal Q}$ be the
canonical projection. 
By Lemma \ref{asymptotic} and Lemma \ref{expansion}, 
there is a number $T>0$ and there is a Borel subset 
$Z_0\subset Z_1\cap \Pi (\tilde {\cal A})$ 
with 
\[\lambda(Z_0)>\lambda(Z_1)/(1+\epsilon)\] such
that for every $z\in \tilde Z_0=\tilde Z_1\cap \Pi^{-1}(Z_0)$ and every
$t\geq T$ we have
\[\delta_{Pz}\leq \rho\delta_{P\Phi^tz}/e^\kappa\text{ on }
D(\Phi^tz,\alpha_0)\]
where $\kappa >0$ is as in the estimate (\ref{deltacomparison}).
We may assume that $Z_0=V(A_0,K^{su},t_0)$ for
some Borel set $A_0\subset K^{ss}$.
In particular, 
we conclude as in the proof of Lemma \ref{upperestimate} 
(see the estimate (\ref{lambdaestimate}) that
(with some a-priori adjustment of the constant $\epsilon$) 
the following holds true. 
Let $z\in Z_0$ and let $t\geq T$ be such that 
$\Phi^tz\in Z_1$. Let $\hat E$ be the connected component
containing $\Phi^t z$ of the intersection $\Phi^tV_1\cap V_1$. 
Then the Lebesgue measure of the intersection 
$\Phi^tZ_2\cap Z_3\cap \hat E$ is not bigger than
\[e^{-ht}\lambda(Z_1)(1+\epsilon)^3\leq 
e^{-ht}\lambda(Z_0)(1+\epsilon)^4.\]

On the other hand, since the Lebesgue
measure is mixing, for sufficiently large $t>T$ we have
\[\lambda(\Phi^tZ_0\cap Z_0)\geq \lambda(Z_0)^2/(1+\epsilon).\]
Together 
this implies that the number of such intersection 
components is at least \[e^{ht}\lambda(Z_0)/(1+\epsilon)^5.\]

Next we claim that for sufficiently large $n\geq T$ 
and for a point $z\in Z_0$ with $\Phi^nz\in Z_1$
there is a periodic orbit for the flow $\Phi^t$ which 
intersects $Z_3$ in an arc of length at least $2t_0$ and whose
period is contained in the interval  
$[n-\epsilon,n+\epsilon]$.
To this end let 
$n_1>\max\{n_0,T\}$; then the 
conclusion of Lemma \ref{quasigeodesic} is satisfied for every
periodic $(Z_1,n_1)$-pseudo-orbit beginning 
at a point $z\in Z_0\subset V$.

Let $u\in Z_0$ be such that $\Phi^nu\in Z_1$ for 
some $n>n_1$.   
Let $\gamma$ be a characteristic curve of the 
periodic $(Z_1,n_1)$-pseudo-orbit $(u,n)$ which we obtain by
connecting $\Phi^nu\in Z_0$ with $u\in Z_0$ by a 
smooth arc contained in $Z_1$.
Up to replacing $n$ by $R=n+\tau$ for some $\tau\in [-2t_0,2t_0]
\subset [\epsilon/2,\epsilon/2]$ 
we may assume that $u\in V(K^{ss},K^{su}),\Phi^Ru\in V(K^{ss},K^{su})$. 

Let $\tilde \gamma$ be a lift of $\gamma$
to $\tilde {\cal Q}$ with starting point 
$\tilde \gamma(0)\in \tilde Z_0$. Then $\tilde \gamma$ is invariant
under a mapping class $g\in {\rm Mod}(S)$ whose conjugacy class
defines the homotopy class of $\gamma$ in ${\cal S}$. A fundamental
domain for the action of $g$ on $\tilde \gamma$ projects to 
a smooth arc in ${\cal T}(S)$ of length at most $R+\epsilon/2<n+\epsilon$.

By Lemma \ref{quasigeodesic} and the choice of
$Z_0,R$ the curve
$t\to \Upsilon_{\cal T}(P\tilde \gamma(t))$ is an 
unparametrized $L$-quasi-geodesic
in ${\cal C}(S)$ of infinite diameter. Up to perhaps a uniformly
bounded modification, 
this quasi-geodesic is invariant under the  mapping class 
$g\in {\rm Mod}(S)$, and $g$ acts on the 
quasi-geodesic $\Upsilon_{\cal T}(P\tilde \gamma)$ as a translation.
As a consequence,  $g$ 
acts on ${\cal C}(S)$ with unbounded orbits and hence it is pseudo-Anosov.
By invariance of $\tilde \gamma$ under $g$, 
the attracting fixed point of $g$ is just the endpoint of 
$\Upsilon_{\cal T}(P\tilde \gamma)$
in $\partial{\cal C}(S)$. 

Since $g$ is pseudo-Anosov, 
there is a closed orbit $\zeta$ for
$\Phi^t$ on ${\cal Q}(S)$ 
which is the projection of a $g$-invariant flow line $\tilde \zeta$ for
$\Phi^t$ in ${\cal Q}^1(S)$. 
The length of the orbit is at most $R+\epsilon$.
The image under the map $\Upsilon_{\cal T}P$ of the orbit 
$\tilde \zeta$ in ${\cal Q}^1(S)$ is an unparametrized
$p$-quasi-geodesic in ${\cal C}(S)$ which connects
the two fixed points for the action of $g$ on $\partial{\cal C}(S)$.

Assume that the characteristic curve $\gamma$  
is parametrized on 
$[0,R+1]$ with $\gamma(0)=u$. 
As in the proof of Theorem 4.3 of \cite{H07b},
we claim that for every $i>0$ we have
\[\delta_{P\tilde \gamma(0)}(F(\tilde \gamma(0)),F(\tilde
\gamma(iR+i)))< \rho\] (note that
this makes sense since the points $\tilde\gamma(iR+i)$ 
are lifts of recurrent points in ${\cal Q}(S)$ by assumption).
To see this we proceed by induction on $i$.
The case $i=1$ follows from the definition and from 
(\ref{deltacomparison}) above, so assume
that the claim is known for all $j\leq i-1$ and some $i\geq 0$.
By equivariance under the
action of the mapping class group 
we have
\begin{equation}
\delta_{P\tilde \gamma(R+1)}(F\tilde \gamma(R),F\tilde \gamma(R+1))
\leq e^\kappa\rho,
\end{equation}
moreover the distances
$\delta_{P\tilde \gamma(R)}, \delta_{P\tilde \gamma(R+1)}$ are
$e^\kappa$-bilipschitz equivalent.

Now $F(\tilde \gamma(jR+j))\in B_{\tilde
\gamma(R+1)}(F(\tilde \gamma(R+1)),\rho)$ 
for all $j\in \{1,\dots, i\}$ 
by the induction hypothesis and therefore
\begin{equation}
\delta_{P\tilde \gamma(R)}(F(\tilde \gamma(R)),
F(\tilde\gamma(jR+j)))\leq 2e^\kappa\rho.
\end{equation}
On the other hand, by the choice of
$\rho$ and the choice of $R$ and the fact that
$\tilde\gamma(0)\in \tilde Z_0$ 
we obtain that
\begin{equation}\delta_{P\tilde
\gamma(R)}(F\tilde \gamma(R), F\tilde \gamma(jR+j))\geq
\delta_{P\tilde \gamma(0)} (F\tilde \gamma(0),F\tilde
\gamma(jR+j))/2e^\kappa.\end{equation}
Together this implies the above  claim.

As a consequence, 
the attracting fixed point $\xi$ for the action 
of the pseudo-Anosov element $g$ on $\partial {\cal C}(S)$ 
is contained in the
ball $D(\tilde \gamma(0),\rho)$, moreover
it is contained in the closure of the set 
$F(W_{\tilde {\cal Q}}^{su}(\tilde q)\cap\tilde{\cal A})\subset
F(\tilde {\cal A}\cap \tilde {\cal Q})$.
The same argument also shows that
the repelling fixed point of $g$ is contained
in the intersection of 
$D(-\tilde \gamma(0),\rho)$ with
the closure of 
$F({\cal F}W_{\tilde {\cal Q}}^{ss}(\tilde q)\cap \tilde {\cal A})\subset
F(\tilde {\cal A}\cap \tilde {\cal Q})$. Since the map $F$
is closed we conclude that 
the axis of $g$ is contained in the
closure of $\tilde{\cal Q}$. Since $\tilde \gamma(0)\in Z_1$,
by property 4) above, this axis  
passes through the lift
$\tilde Z_3$ of $Z_3$ containing $\tilde q$. In other words,
the projection of this axis to $\overline{\cal Q}$ 
passes through $Z_3$, and, in particular, it is contained in ${\cal Q}$.
Moreover, it intersects the component of $\Phi^RZ_1\cap Z_3$ 
which contains $\Phi^Ru$.
As a consequence, 
the length of the axis is contained in 
$[R-\epsilon/2,R+\epsilon/2]\subset [n-\epsilon,n+\epsilon]$. 

To summarize, there is an injective assignment which
associates to every $R>n_0$ and to every
connected component of the intersection $\Phi^RZ_1\cap Z_1$ 
for $R>n_0>T$ which contains points in $\Phi^RZ_0\cap Z_0$ 
a subarc of length $2t_0$ of the intersection with $Z_3$ of 
a periodic orbit for $\Phi^t$ whose period is contained in
$[n-\epsilon,n+\epsilon]$. 
Together with the above discussion, this completes the
proof of the proposition.
\end{proof}

We use Proposition \ref{lowerestimate} 
to complete the proof of our theorem from the introduction.

\begin{theorem}\label{counting}
The Lebesgue measure on every stratum ${\cal Q}$ is obtained from
Bowen's construction.
\end{theorem}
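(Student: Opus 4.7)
The plan is to identify every subsequential weak limit of $\{\mu_R\}$ with $\lambda$ and to invoke precompactness to promote this to weak convergence of the full sequence.

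First I would verify precompactness of $\{\mu_R\}_{R \geq 1}$ in the vague topology. For any relatively compact open set $U \subset {\cal Q}(S)$ with $\lambda(\partial U) = 0$, partitioning $[0,R]$ into consecutive intervals of length $2t_0$ centered at $S_k = R - (2k+1)t_0$ and applying Lemma \ref{upperestimate} to each piece gives
\[\mu_R(U) = e^{-hR}\sum_{k} H(U, S_k - t_0, S_k + t_0) \leq C(t_0,\epsilon)\lambda(U)\]
uniformly in $R$, so a diagonal extraction produces subsequential weak limits.

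Let $\mu$ be any such limit. As a weak limit of $\Phi^t$- and ${\cal F}$-invariant measures, $\mu$ inherits both invariances; by Proposition \ref{absolute}, $\mu = \phi\lambda$ with $0 \leq \phi \leq 1$, and $\mu$ is finite with full mass on ${\cal Q}$. Since $\lambda$ is $\Phi^t$-ergodic and $\phi$ is $\Phi^t$-invariant, $\phi$ is $\lambda$-a.e.\ equal to a constant $c \in [0,1]$. To pin down $c$, I would use Proposition \ref{lowerestimate}: for $\epsilon>0$ and any recurrent point $q \in {\cal R} \cap {\cal S}$ (a full-measure set by Lemma \ref{cals} and Masur--Veech ergodicity), the proposition furnishes an open neighborhood $U$ of $q$ with
\[\liminf_{R \to \infty} H(U, R - t_0 - \epsilon, R + t_0 + \epsilon) e^{-hR} \geq 2t_0\lambda(U)(1-\epsilon).\]
Partitioning $[T_0, R]$ into non-overlapping intervals of length $2(t_0+\epsilon)$ with centers $M_k = R - (2k+1)(t_0+\epsilon)$ and summing this lower bound yields a matching lower bound on $\liminf_{R\to\infty}\mu_R(U)$; together with the upper bound summed in the same fashion, both sides share a common limiting value as $t_0,\epsilon \to 0$, forcing $c\lambda(U) = \mu(U) = \lambda(U)$ and hence $c = 1$. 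Thus $\mu = \lambda$, and the full sequence converges weakly to $\lambda$.

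The main obstacle will be the careful matching of the geometric sums arising from the partition: one must verify that the error factors $(1 \pm \epsilon)$ and the $\epsilon$-widening of intervals in Proposition \ref{lowerestimate} collapse to give identical limiting values, and that the contribution of the finitely many short periodic orbits (prime period below some threshold $T_0$) is negligible after multiplication by $e^{-hR}$. Lemma \ref{upperestimate} and Proposition \ref{lowerestimate} are set up with compatible parameters precisely to allow this matching, so once the bookkeeping is carried out the identification $\mu = \lambda$ is essentially forced.
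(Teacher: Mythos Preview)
Your approach is essentially the same as the paper's: both rely on Proposition~\ref{absolute} for the upper control and Proposition~\ref{lowerestimate} for the lower control, and both must pass from the per-window estimates on $H(U,R-t_0,R+t_0)$ to control of $\mu_R(U)$ by summing over a geometric partition of $[0,R]$. The paper's own proof is extremely terse and leaves this summation implicit; you make it explicit, and you also insert the ergodicity step (writing $\mu=\phi\lambda$ and concluding $\phi\equiv c$), which the paper does not spell out. This is a clean addition: once $\mu=c\lambda$ is known, the matching of the summed upper and lower bounds pins down $c$ without having to check both inequalities pointwise on a basis of sets.

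One point deserves care in your write-up. The neighborhood $U$ furnished by Lemma~\ref{upperestimate} and Proposition~\ref{lowerestimate} depends on the chosen $t_0$ and $\epsilon$, so you cannot literally ``take $t_0,\epsilon\to 0$ with $U$ fixed.'' The correct formulation is: for each admissible pair $(t_0,\epsilon)$ and the associated $U$, the two summed geometric series sandwich the constant $c$ (not $\mu(U)$) between $\frac{t_0}{\sinh(h(t_0+\epsilon))}(1-\epsilon)$ and $\frac{t_0}{\sinh(ht_0)}(1+\epsilon)$; since $c$ is independent of $U$, letting $\epsilon\to 0$ (which forces $t_0\to 0$ in the constructions) collapses the interval and determines $c$. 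You allude to this in your final paragraph, but when you carry out the bookkeeping make sure you divide through by $\lambda(U)$ \emph{before} taking limits, so that the dependence on $U$ drops out. You should also verify that Lemma~\ref{upperestimate} and Proposition~\ref{lowerestimate} can be arranged to produce the \emph{same} $U$ and $t_0$; inspection of the proofs shows both are built from the same nested family $Z_1\subset Z_2\subset Z_3$, so this is available, but it is not stated explicitly in either result.
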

\begin{proof}
By Proposition \ref{absolute} and Proposition \ref{lowerestimate}, 
it suffices to show the
following. Let $q\in {\cal Q}$ be birecurrent 
and let $\epsilon >0$. For $R>0$ let $\Gamma(R)$ be the
set of all periodic orbits of $\Phi^t$ in ${\cal Q}$ 
of period at most $R$. Then there is a 
compact neighborhood $K$ of $q$ in ${\cal Q}$ and there is a number
$n>0$ such that for every $N>n$ the measure 
\[\mu_N=e^{-hN}\sum_{\gamma\in \Gamma(R)}\delta(\gamma)\]
assigns the mass
\[\mu_N(K)\in 
[(1-\epsilon) \lambda(K), (1+\epsilon)\lambda(K)]\]
to $K$. However, this holds true by
Proposition \ref{absolute} and Proposition \ref{lowerestimate}.
This completes the proof of the theorem.
\end{proof}

{\bf Acknowledgement:} This work was carried out 
in fall 2007 while I participated
in the program on Teichm\"uller theory and
Kleinian groups at the MSRI in Berkeley. I thank
the organizers for inviting me to participate,
and I thank the MSRI for its hospitality.
I also thank Juan Souto for
raising the question which is answered in this note.

\bigskip

\noindent
MATHEMATISCHES INSTITUT DER UNIVERSIT\"AT BONN\\
ENDENICHER ALLEE 60, D-53115 BONN, GERMANY\\
%\smallskip
%\noindent
e-mail: ursula@math.uni-bonn.de

\end{document}